\documentclass[12pt]{article}
\usepackage{times}
\usepackage{amsfonts}
\usepackage{amsmath}
\usepackage{amssymb}
\usepackage{amsthm}
\usepackage{mathtools}
\usepackage{tikz-cd}
\usepackage{xcolor}
\usepackage{hyperref}
\hypersetup{
	colorlinks=true,
	linkcolor=blue,
	urlcolor=blue,
    citecolor=blue,
    anchorcolor=blue,
	linktoc=all
}
\usepackage{bbm}

\setlength{\arrayrulewidth}{0.5mm}

\topmargin=-0.45in
\evensidemargin=0in
\oddsidemargin=0in
\textwidth=6.5in
\textheight=9.0in
\headsep=0.25in

\newcommand{\R}{\mathbb{R}}
\newcommand{\C}{\mathbb{C}}

\newcommand{\Z}{\mathbb{Z}}

\newcommand{\MU}{\mathbf{MU}}
\newcommand{\KU}{\mathbf{KU}}
\newcommand{\T}{\mathbb{T}}
\newcommand{\Zp}{\mathbb{Z}_{(p)}}
\newcommand{\A}{\mathbb{A}}

\newtheorem{theorem}{Theorem}[section]
\newtheorem{proposition}[theorem]{Proposition}
\newtheorem{definition}[theorem]{Definition}
\newtheorem{lemma}[theorem]{Lemma}
\newtheorem{corollary}[theorem]{Corollary}

\newtheorem{conjecture}[theorem]{Conjecture}

\date{}
\author{Noah Wisdom}
\title{Properties and Examples of $A$-Landweber Exact Spectra}

\begin{document}
\maketitle

\begin{abstract}
It is classically known that Landweber exact homology theories (complex oriented theories which are completely determined by complex cobordism) admit no nontrivial phantom maps. Herein we propose a definition of $A$-Landweber exact spectra, for $A$ a compact abelian Lie group, and show that an analogous result on phantom maps holds. Also, we show that a conjecture of May on $KU_G$ is false. We do not prove an equivariant Landweber exact functor theorem, and therefore our result on phantom maps only applies to $MU_A$, $KU_A$, their $p$-localizations, and $BP_A$, which are shown to be $A$-Landweber exact by ad-hoc methods.
\end{abstract}

\section{Introduction}

Any complex-oriented ring spectrum gives rise to a formal group law. Conversely, given a formal group law, we can ask if it arises from a complex oriented ring spectrum. In \cite{Lan73} and \cite{Lan76}, Landweber gives checkable criteria on a formal group law which imply that it arises from a complex oriented homology theory. Such a homology theory is said to be Landweber exact; a natural consequence of Landweber's results is that Landweber exact homology theories are completely determined by $MU_*(-)$.

Historically, Landweber exact ring spectra, such as $MU$, $KU$, $BP$, and the Lubin-Tate theories, have played a central role in chromatic homotopy theory. First observed by Hopkins, one important property of Landweber exact ring spectra is that they have no nontrivial phantom maps: every map of spectra which induces the zero natural transformation on homology theories is actually nullhomotopic (cf Lurie's lecture notes \cite{Lur10}).

We take the perspective that the defining property of Landweber exact spectra is that their homology theories are determined by that of $MU$. Namely, there is a natural isomorphism \[ E_*(-) \cong MU_*(-) \otimes_{MU_*} E_* \] induced by a ring map $MU_* \rightarrow E_*$. This perspective straightforwardly generalizes to $A$-spectra with $MU_A$ replacing $MU$, for $A$ a compact abelian Lie group. Surprisingly, we do not need to replace our ring map $MU_* \rightarrow E_*$ with a map of Tambara functors, we only need a ring map $\pi_*^A(MU_A) \rightarrow \pi_*^A(E_A)$. Similarly yet still surprisingly, the $RO(A)$-graded structure plays a very minimal role.

It is the main objective of this paper to argue that this is the correct perspective on equivariant Landweber exactness. In future work, the author hopes to develop algebraic criteria, analogous to Landweber's condition that the Hasse invariants form a regular sequence, which ensure that an equivariant formal group law arises from an equivariant complex oriented theory.

Over the past twenty-five years, the machinery of equivariant complex orientations and equivariant formal group laws has been developed, and foundational results from the nonequivariant story have been ported over to the equivariant world for compact abelian Lie groups. The definition and first properties of equivariant formal group laws were established by Cole, Greenlees, I. Kriz \cite{CGK00}, and Strickland \cite{Str11}. Meanwhile, Sinha \cite{Sin01}, Strickland \cite{Str01}, Abram and Kriz \cite{AK16}, and Kriz and Lu \cite{KL21} studied the homotopy groups of $MU_A$. Recently, Hausmann \cite{Hau22} proved an equivariant version of Quillen's theorem on the Lazard ring, and Hausmann and Meier \cite{HM23} classified the invariant prime ideals of $( (MU_A)_*, (MU_A)_* MU_A )$. In particular, $MU_A$ has played a central role in what could be seen as the foundations of chromatic equivariant homotopy theory. 

With this language, we can ask if a map classifying an $A$-equivariant formal group law determines a homology theory on compact $A$-spaces or $A$-spectra. If so, the resulting representing $A$-spectrum $E$ is called $A$-Landweber exact, and we have the natural isomorphism \[ E_*(-) \cong (MU_A)_*(-) \otimes_{(MU_A)_*} E_* \] Next, we can deduce properties of so-called $A$-Landweber exact theories. In particular, we will show that $A$-Landweber exact theories have no nontrivial phantom maps, essentially by adapting the proof in Lurie's lecture notes \cite{Lur10}. From the definition, $MU_A$ is tautologically $A$-Landweber exact.

Similarly to $MU_A$, our second example of an $A$-Landweber exact spectrum, $KU_A$, has been the object of much study. Originally constructed by Atiyah and Segal \cite{AS66}, a connective variant was constructed by Greenlees \cite{Gre05}, and a global version oriented by $MSpin^c$ was constructed by Joachim \cite{Joa04} and contextualized by Schwede \cite{Sch18}. There is a completion theorem relating $KU_A$ with the Borel completion of $KU$ \cite{AS69} \cite{AHJM88} which is important in the study of characteristic classes. The geometric fixed points of $KU_A$ were studied by tom Dieck in \cite{tom79}, and this study was used by Balderrama to study the equivariant Bousfeld class of $KU_A$ \cite[Section A.4]{Bal22}. We will show via ad-hoc methods that $KU_A$ is $A$-Landweber exact. More precisely, we will identify the equivariant Conner-Floyd isomorphism of Okonek \cite{Oko82} with the natural transformation arising from equivariant complex orientability of $KU_A$.

Our final example is equivariant Brown-Peterson theory. Originally constructed by \cite{BP66}, constructions of a completely different flavor have been given by Quillen \cite{Qui69}, Priddy \cite{Pri80}, and by Baas-Sullivan theory \cite{Baa73}. Perhaps most importantly, $BP$ has played a central role in Adams-Novikov spectral sequence computations of the stable homotopy groups of spheres by Miller, Ravenel, and Wilson \cite{MRW77}. Equivariant versions of $BP$ have not enjoyed as much study as $MU_A$ and $KU_A$, although a construction due to May \cite{May98} is known. We will give an alternate construction of $BP_A$ manifestly as an $A$-Landweber exact cohomology theory. Our construction is most analogous to Quillen's construction; it would be very interesting to know if an analogue of Priddy's cellular construction of $BP$ goes through to construct $BP_A$.

It is an artifact of the methods of this paper that we are often able to state things in terms of cohomology rather than homology. This choice is forced upon us by the use of cohomological language throughout \cite{Oko82}. An equivariant Landweber exact functor theorem will most likely require homological language due to that fact that the Hopf algebroid $( (MU_A)_*, (MU_A)_*(MU_A) )$ which classifies equivariant formal group laws and their isomorphisms is homological rather than cohomological. Therefore we attempt to mantain bridges from cohomological language to homological language, cf Proposition \ref{Bridges}.

The contents of this paper are as follows. In section 2, we propose a definition of $A$-Landweber exactness and show that $MU_A$ and $KU_A$ are $A$-Landweber exact. Also, we use the equivariant Quillen theorem to resolve a conjecture of May concerning $MU_G \wedge_{MU} KU$ (for all compact Lie groups $G$) from \cite{May98}. In section 3, we construct $BP_A$, show that it is $A$-Landweber exact, and identify it with May's construction $MU_A \wedge_{MU} BP$. In section 4, we show that any phantom map between equivariant Landweber exact spectra is nullhomotopic.

\section{Equivariant Landweber exactness}

The main goal of this section is to give a definition of equivariant Landweber exactness. Our definition is motivated by the fact that $MU_A$ and $KU_A$ ought to be equivariantly Landweber exact. In particular, nonequivariantly, Landweber exactness of $KU$ is recovered by the Conner-Floyd isomorphism, which historically predates Landweber's results. Okonek has already generalized the Conner-Floyd isomorphism to an equivariant version for compact abelian Lie groups, and we use this as our starting point.

Our secondary goal in this section is to show that the map $(MU_A)^* \rightarrow (KU_A)^*$ defining the Conner-Floyd isomorphism is the same as the map classifying the equivariant formal group law arising from the equivariant complex orientation of $KU_A$. To show this, we will utilize the global homotopy theory of Schwede \cite{Sch18} and Bohmann \cite{Boh12}. We will also need results of Hausmann \cite{Hau22} and Hausmman-Meier \cite{HM23}, which we review here.

It is worth noting that we only need to know that $KU_A$-cohomology is determined by $MU_A$-cohomology in order to show that it has no nontrivial phantom maps; we do not need to know that it is specifically the equivariant formal group law data which specifies this determination. However, this perspective unifies this example with the example of $BP_A$, and omitting this language would unnecessarily obfuscate the component ideas. Furthermore, most nonequivariant spectra known to be determined by $MU$-cohomology in the way which feeds into the phantom map argument are constructed by supplying carefully chosen formal group laws into the Landweber exact functor theorem.

\subsection{Review of global methods}

Given an orthogonal spectrum $X$, we can give $X$ the trivial $A$-action, and thus define the $A$-equivariant homotopy groups $\pi_*^A$ of $X$. The $\mathcal{F}$-global homotopy category with respect to a family of groups $\mathcal{F}$ (which for us will be the family of compact abelian Lie groups) is constructed as the localization of the category of orthogonal spectra with respect to the class of maps which induce isomorphisms on $\pi_*^A$ for each $A$ in the family $\mathcal{F}$.

Schwede \cite[Section 6.1]{Sch18} gives a construction of a global commutative ring spectrum $\MU$, and Joachim \cite{Joa04} constructs (without the language of global homotopy theory) a global commutative ring spectrum $\KU$. Respectively, these recover tom Dieck's homotopical cobordism $MU_A$ and the equivariant $K$-theory $KU_A$ of Atiyah and Segal.

The main property of global spectra which we will require is the following: for a global spectrum $E$, the homotopy groups $\pi_*^A(E_A)$ assemble to a contraviant functor from the category $Ab$ of compact abelian Lie groups to graded abelian groups. If $E$ is a commutative ring spectrum, then $\underline{\pi}_*(E)$ is a contravariant functor from $Ab$ to the category of (graded-commutative) rings.

\begin{definition}[Hausmann \cite{Hau22}]
A complex orientation of a global commutative ring spectrum $E$ is an $RO(\mathbb{T})$-graded unit $t \in \pi_{2-\tau}^\mathbb{T}(E)$ which restricts to $1$ at the trivial group. Here $\tau$ is the tautological complex representation of $\mathbb{T}$.
\end{definition} 

By an argument in \cite[Section 6.1]{Sch18}, $\MU$ is equipped with Thom classes which specify a global complex orientation in this sense. 

Furthermore, we can see that $\KU$ is complex oriented as follows. The Bott classes $\beta_{\mathbb{T},\tau}$ and $\beta_{\mathbb{T},2}$ of \cite[Construction 6.3.46]{Sch18} are $RO(\mathbb{T})$-graded units in $\pi_{- \tau}^\mathbb{T}(KU_\mathbb{T})$ and $\pi_{-2}^\mathbb{T}(KU_\mathbb{T})$ respectively. In particular, $\beta_{\mathbb{T},\tau}^{-1} \beta_{\mathbb{T},2}$ is an $RO(\mathbb{T})$-graded unit in $\pi_{2 - \tau}(KU_\mathbb{T})$. By \cite[Theorem 6.9]{Joa04}, the restriction of this element to the trivial group $\{ 1 \}$ is $\beta_{1,2}^{-1} \beta_{1,2} = 1$.

Global complex orientations give rise to an algebraic structure on global homotopy groups similarly to the way in which a nonequivariant complex orientation gives rise to a formal group law. For technical reasons, we have to restrict our global homotopy groups from all compact abelian Lie groups to tori, although this turns out to not be an issue in the cases of interest.

\begin{definition}[Hausmann \cite{Hau22}]
A global group law is a functor $X : Tori^{op} \rightarrow Rings$ equipped with an element $e \in X(S^1)$ such that for every split surjective character $V : \mathbb{T}^n \rightarrow \mathbb{T}$, the sequence \[ 0 \rightarrow X(\mathbb{T}^n) \xrightarrow{V^*(e) \cdot -} X(\mathbb{T}^n) \xrightarrow{res_{ker(V)}^{\mathbb{T}^n}} X(ker(V)) \rightarrow 0 \] is exact.
\end{definition}

Given a global group law, we left Kan extend it to a functor $Ab^{op} \rightarrow Rings$ and identify our original functor with this extension. Unfortunately, since not every global ring spectrum has the property that their $A$-homotopy groups for general groups $A$ are left Kan extended from their $A$-homotopy groups as $A$ runs through tori, this means that a priori not every complex oriented global ring spectrum $E$ has $\underline{\pi}_*(E)$ equal to the associated global group law. However, if $\pi_*^A(E)$ is even for each group $A$, not just tori, then $\underline{\pi}_*(E)$ agrees with the left Kan extension of the associated global group law. In particular, this is satisfied by both $\MU$ and $\KU$.

Given a complex orientation of a global spectrum $E$, Hausmann shows that a homotopy orbit construction specifies an $A$-equivariant complex orientation of $E_A$ for each compact abelian Lie group $A$. It is clear from this construction that the complex orientation of $\MU$ gives the standard complex orientation of $MU$, and similarly for $\KU$ and $KU$. 

\begin{theorem}[Hausmann \cite{Hau22}]
$\underline{\pi}_*(\MU)$ is an initial object in the category of global group laws. Furthermore $MU_A^*$ is isomorphic to the $A$-equivariant Lazard ring $L_A$.
\end{theorem}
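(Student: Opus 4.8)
The plan is to adapt Quillen's proof that $MU_* \cong L$, carried out one torus at a time; this is in essence the argument of Hausmann \cite{Hau22}. Two things must be established: that $\underline{\pi}_*(\MU)$ is initial among global group laws, and that, after left Kan extension, its value at a compact abelian Lie group $A$ is the equivariant Lazard ring $L_A$.

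First I would verify that $\underline{\pi}_*(\MU)$ really is a global group law. Its distinguished element is the Euler class of $\tau$, normalized by the orientation class $t$ --- equivalently the image of the Thom class under the map induced by $S^0 \hookrightarrow S^\tau$. For a split surjective character $V \colon \mathbb{T}^n \to \mathbb{T}$ the kernel $W = \ker(V)$ is again a torus and $S(V) \cong \mathbb{T}^n/W$ as a $\mathbb{T}^n$-space, so the cofiber sequence $(\mathbb{T}^n/W)_+ \to S^0 \to S^V$ of pointed $\mathbb{T}^n$-spaces gives a long exact sequence in $\mathbb{T}^n$-equivariant $MU$-cohomology; via the Thom isomorphism $\widetilde{MU}{}^{*}_{\mathbb{T}^n}(S^V) \cong MU^{*-2}_{\mathbb{T}^n}$ and the restriction isomorphism $MU^{*}_{\mathbb{T}^n}\big((\mathbb{T}^n/W)_+\big) \cong MU^{*}_W$, the relevant map in that sequence becomes multiplication by $V^*(e)$. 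Since $MU^*$ of every torus is concentrated in even degrees (Sinha \cite{Sin01}, Abram--Kriz \cite{AK16}, and \cite{Hau22} in this generality), the connecting homomorphisms vanish and the long exact sequence breaks into the short exact sequences demanded by the definition.

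Next I would prove initiality. Given a global group law $(X,e_X)$, a morphism $f \colon \underline{\pi}_*(\MU) \to X$ is forced on $\{1\}$ to be the unique ring map $MU_* = \underline{\pi}_*(\MU)(\{1\}) \to X(\{1\})$ classifying the formal group law carried by $X(\{1\})$ --- one first checks that the exactness axiom applied to the multiplication $\mathbb{T}^2 \to \mathbb{T}$ equips $X(\{1\})$ with a genuine nonequivariant formal group law, so that Quillen's theorem applies --- and on $\mathbb{T}^n$ it is forced to send the Euler class $e_\lambda := \lambda^*(e) \in MU^*_{\mathbb{T}^n}$ of each character $\lambda \colon \mathbb{T}^n \to \mathbb{T}$ to $\lambda^*(e_X)$. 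Both the existence and the uniqueness of $f$ thus reduce to the assertion that $MU^*_{\mathbb{T}^n}$ is generated over $MU_*$ by these Euler classes modulo relations generated by the formal-group-law identities $e_{\lambda\mu} = F_{MU}(e_\lambda,e_\mu)$ and the Gysin relations established above: granting this, every relation persists in $X$ by the global-group-law axioms, and an induction on the rank --- in which the quotients $MU^*_{\ker\lambda}$ appearing in the Gysin relations have strictly smaller rank and so are handled by the inductive hypothesis --- produces $f$ uniquely. This presentation of $MU^*_{\mathbb{T}^n}$, resting on the equivariant complex cobordism computations of Sinha \cite{Sin01} and Abram--Kriz \cite{AK16} together with the evenness above, is the real content, and I expect it to be the main obstacle.

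Finally, for the ``furthermore'': because $\pi^A_*(\MU_A)$ is even for \emph{every} compact abelian Lie group $A$ and not merely for tori, $\underline{\pi}_*(\MU)$ agrees with the left Kan extension of its restriction to $\mathit{Tori}$, so $MU^*_A = \underline{\pi}_*(\MU)(A)$. This must then be matched with the $A$-equivariant Lazard ring $L_A$ of Cole--Greenlees--Kriz \cite{CGK00} and Strickland \cite{Str11}, which carries the universal $A$-equivariant formal group law. Unwinding the defining axioms, an $A$-equivariant formal group law amounts to a commutative one-dimensional formal group together with compatible Euler classes for all characters of $A$ and their restrictions to the quotient groups of $A$, with the axioms matching the exactness condition; so evaluating a global group law on the quotients of $A$ yields an $A$-equivariant formal group law, and comparing universal properties gives a canonical isomorphism $L_A \cong MU^*_A$ taking the universal $A$-equivariant formal group law to the one on $MU^*_A$ coming from its complex orientation. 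The only point requiring care is to use the form of $L_A$ appropriate to compact Lie groups, its completion computing $MU^*(BA)$ instead.
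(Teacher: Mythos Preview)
The paper does not prove this statement; it is stated as a theorem of Hausmann and cited to \cite{Hau22}, then used as a black box throughout. There is thus nothing in the paper to compare your proposal against --- you are sketching a proof of the cited result itself.

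As a sketch of Hausmann's argument your outline is broadly in the right spirit: the global group law structure on $\underline{\pi}_*(\MU)$ does come from the cofiber sequences you write down together with evenness, and the identification with $L_A$ does go through a comparison of universal properties after left Kan extension. But the step you yourself flag as ``the real content'' --- a presentation of $MU^*_{\mathbb{T}^n}$ over $MU_*$ by the Euler classes $e_\lambda$ modulo only the formal-group-law relations and the Gysin short exact sequences --- is not something one can simply read off from the Sinha or Abram--Kriz computations, and Hausmann does not proceed that way. His argument constructs the initial global group law $\mathbf{L}$ abstractly, obtains the comparison map $\mathbf{L} \to \underline{\pi}_*(\MU)$ by universality, and proves it is an isomorphism by an analysis involving a regularity/filtration argument on Euler classes together with an identification of associated gradeds; establishing the presentation you want is essentially equivalent to proving the theorem, so invoking it collapses the argument to its hardest point without indicating how it is handled. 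Your final remark about ``its completion computing $MU^*(BA)$'' is also somewhat off: $L_A$ is not a completed object, and the completion theorem is a separate statement relating $MU^*_A$ to $MU^*(BA)$, not something needed to identify $L_A$ with $MU^*_A$.
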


\begin{corollary}
There is a map of global group laws $\underline{\pi}_*(\MU) \rightarrow \underline{\pi}_*(\KU)$ which classifies the global group law induced by the complex orientation of $\KU$. For each abelian group $A$ this map refines to the map $L_A \cong MU_A^* \rightarrow KU_A^*$ classifying the $A$-equivariant formal group law over $KU_A^*$.
\end{corollary}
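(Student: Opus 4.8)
The plan is to obtain the map as the unique map out of the initial object supplied by the preceding theorem, and then to propagate it from tori to all compact abelian Lie groups by left Kan extension. First I would record that $\KU$ is complex oriented — this was observed above using the Bott classes $\beta_{\mathbb{T},\tau}$ and $\beta_{\mathbb{T},2}$ — so that Hausmann's homotopy orbit construction equips each $KU_A$ with an $A$-equivariant complex orientation, and the associated global homotopy groups of $\KU$, restricted to $Tori^{op}$, form a global group law $G_{\KU}$ in the sense of the definition above (the requisite Gysin exactness being part of Hausmann's general theory of complex oriented global ring spectra). Since the preceding theorem identifies the restriction of $\underline{\pi}_*(\MU)$ to $Tori^{op}$ with the initial global group law, there is a unique map of global group laws $\underline{\pi}_*(\MU)|_{Tori^{op}} \to G_{\KU}$; being the unique map out of the initial object, it tautologically classifies $G_{\KU}$.

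Next I would upgrade this to a statement over all of $Ab^{op}$. As noted in the discussion above, both $\pi_*^A(\MU)$ and $\pi_*^A(\KU)$ are concentrated in even degrees for every compact abelian Lie group $A$, not merely for tori; hence $\underline{\pi}_*(\MU)$ and $\underline{\pi}_*(\KU)$ each agree, as functors $Ab^{op} \to Rings$, with the left Kan extensions of their restrictions to $Tori^{op}$. Because left Kan extension along $Tori^{op} \hookrightarrow Ab^{op}$ is functorial, the map of global group laws from the previous step extends to a natural transformation $\underline{\pi}_*(\MU) \to \underline{\pi}_*(\KU)$ of functors on $Ab^{op}$. Evaluating at a fixed $A$ gives a ring map $MU_A^* \to KU_A^*$, and composing with the isomorphism $L_A \cong MU_A^*$ from the preceding theorem produces the desired map $L_A \to KU_A^*$.

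It remains to check that this map classifies the $A$-equivariant formal group law carried by $KU_A^*$. Since $L_A$ is the representing ring for $A$-equivariant formal group laws, it suffices to verify that the pushforward along $MU_A^* \to KU_A^*$ of the universal $A$-equivariant formal group law over $L_A \cong MU_A^*$ is the $A$-equivariant formal group law determined by the homotopy orbit orientation of $KU_A$. This is a naturality statement: passing from a complex oriented global ring spectrum to its associated global group law, then left Kan extending and evaluating at $A$, must agree with passing from a complex oriented $A$-spectrum to its $A$-equivariant formal group law, compatibly with the construction of the $A$-orientation from the global one by homotopy orbits. I expect this compatibility — tracking the orientation of $KU_A$ through the homotopy orbit construction and through the Kan extension, so as to see that the universal formal group law over $L_A$ lands exactly on the formal group law of $KU_A$ — to be the main point requiring care; the existence and uniqueness of the map on tori is immediate from initiality.
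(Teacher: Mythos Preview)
Your argument is correct and is precisely the unpacking the paper has in mind: the corollary is stated there without proof, as an immediate consequence of Hausmann's theorem (initiality gives the map on tori, and evenness of both $\MU$ and $\KU$ lets one left Kan extend to all of $Ab^{op}$). Your caution about the final compatibility step is slightly excessive---this identification is built into Hausmann's framework, where the homotopy-orbit passage from a global complex orientation to an $A$-equivariant one is exactly what makes the component at $A$ of the global group law map coincide with the classifying map for the $A$-equivariant formal group law.
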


For $A$ the trivial group, this recovers the usual map $MU^* \rightarrow KU^*$ classifying the formal group law. This map may be viewed as coming from the complex orientation $MU \rightarrow KU$. Equivalently, this map is the Conner-Floyd map, which induces the Conner-Floyd isomorphism \[ KU^*(X) \cong MU^*(X) \otimes_{MU^*} KU^* \] In particular, this gives a proof that $KU$ is Landweber exact, and we take this as our starting point towards a definition of $A$-Landweber exactness.

\subsection{Weakly global cohomology theories with Thom classes}

Recall that Okonek has proven \cite{Oko82} that a certain natural transformation $MU_A(-) \rightarrow KU_A(-)$ induces the equivariant Conner-Floyd isomorphism \[ KU_A^*(X) \cong MU_A^*(X) \otimes_{MU_A^*} KU_A^* \] for all compact abelian Lie groups $A$ and compact $G$-spaces $X$. It ought to be true that $KU_A$ is $A$-equivariantly Landweber exact, and it is reasonable to expect that this exactness is realized by the equivariant Conner-Floyd isomorphism. 

Before we define $A$-Landweber exactness, we will interpret the equivariant Conner-Floyd isomorphism in terms of equivariant formal group laws. This connection is provided by the following observation: the map $MU_A^* \rightarrow KU_A^*$ inducing the $A$-equivariant Conner-Floyd isomorphism is the same as the map which classifies the $A$-equivariant formal group law over $KU_A^*$.

In order to prove this, we will take the global approach. In particular, we will need to know that these maps commute with restrictions along group homomorphisms. Then we may proceed in two steps. First, we can boost the identification of these maps at the trivial group to an identification for all tori. Second, since the values of $\underline{\pi}_*(\MU)$ and $\underline{\pi}_*(\KU)$ at a general compact abelian Lie group $A$ are left Kan extended from the values at tori, we get the result for general $A$.

If we already knew the proposition, then the Conner-Floyd maps would assemble to a map of $Ab$-algebras. Okonek observes in \cite[Lemma 1.6]{Oko82} that the Conner-Floyd map at level $A$ arises from the fact that $MU_A^*(-)$ gives the universal example of an $A$-equivariant cohomology theory with Thom classes. So, it is natural to expect that $\MU$ gives the universal example of a "weakly global cohomology theory with Thom classes". Stated properly, this is true, and it is why the Conner-Floyd maps are global. First, we restate a definition from \cite{Oko82}.

\begin{definition}[Okonek Definition 1.2 {Oko82}]
A Thom class for an equivariant complex stable cohomology theory $E_G^*(-)$ with stability-suspension isomorphisms \[ \sigma : E^*(-) \cong E^{*+dim_\C(V)}(S^V \wedge -) \] is a collection of elements $\tau(p)$, such that for each complex $G$-vector bundle $p : E^k \rightarrow X$, $\tau(p) \in E_G^{2k}(M(p))$, satisfying
\begin{enumerate}
\item (Naturality). For a bundle map $f : p \rightarrow q$, $\tau(p) = (Mf)^* \tau(q)$.
\item (Multiplicativity). $\tau(p \times q) = \tau(p) \wedge \tau(q)$.
\item (Normalization). The Thom class of a $G$-representation $V$ is $\tau(V) = \sigma(V)(1)$.
\end{enumerate}
\end{definition}

Note that if $E$ is complex stable, then we can suspend $1 \in E^0$ to an element of $E^V(S^V)$, apply complex stability to obtain the Thom class $\tau_V \in E^{dim_\C(V)}$, then pullback along the inclusion of $0$ and $\infty$, $S^0 \hookrightarrow S^V$, to obtain an element of $E^{dim_\C(V)}$ which Okonek \cite{Oko82} calls the Euler class. Note that we only needed complex stability for this definition. Euler classes defined this way appear in \cite{CGK00} and are ultimately the same Euler classes appearing in \cite{Hau22}.

\begin{definition}
A global spectrum $E$ is said to determine a weakly global cohomology theory with Thom classes if for each group $A$, there are Thom classes $\tau_A(p)$ for the $A$-equivariant cohomology theory $E_A^*(-)$ in the sense of the above definition, which collectively satisfy $res_B^A(\tau_B(p)) = \tau_A(\alpha^*(p))$ for every group homomorphism $\alpha : A \rightarrow B$ and complex $B$-vector bundle $p$.
\end{definition}

\begin{proposition}
The cohomology theories determined by $\MU$ along with their Thom classes as defined in \cite{Oko82} assemble to determine the initial weakly global cohomology theory with Thom classes, in the sense that the natural transformations of \cite[Lemma 1.6]{Oko82} commute with restriction maps.
\end{proposition}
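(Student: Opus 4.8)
The plan is to deduce this from the levelwise universal property of Okonek's Lemma~1.6 \cite{Oko82} together with a uniqueness argument. For a given weakly global cohomology theory with Thom classes $E$, write $\theta_A^E\colon MU_A^*(-)\to E_A^*(-)$ for the natural transformation of complex-stable $A$-equivariant cohomology theories of \cite[Lemma~1.6]{Oko82}, characterized as the \emph{unique} such natural transformation sending $\tau_A^{MU}(p)$ to $\tau_A^E(p)$ for every complex $A$-vector bundle $p$. Two things must be shown: first, that $\MU$ with its Okonek Thom classes is itself a weakly global cohomology theory with Thom classes (so that ``initial'' even makes sense); and second, that for each $E$ and each homomorphism $\alpha\colon A\to B$ one has $res_B^A\circ\theta_B^E=\theta_A^E\circ res_B^A$ as natural transformations $MU_B^*(X)\to E_A^*(\alpha^*X)$ on compact $B$-spaces $X$ (the restriction-compatibility that makes the $\theta^E$'s into a morphism out of $\MU$; its uniqueness will be immediate from the levelwise uniqueness above).

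First I would observe that the target $X\mapsto E_A^*(\alpha^*X)$ is itself a complex-stable $B$-equivariant cohomology theory with Thom classes. Exactness is inherited since $\alpha^*$ preserves cofiber sequences of $B$-spaces; complex stability is inherited because $\alpha^*$ carries $S^V$, for a complex $B$-representation $V$, to $S^{\alpha^*V}$ with $\alpha^*V$ a complex $A$-representation of the same complex dimension, so the suspension isomorphisms of $E_A^*$ supply those for $E_A^*(\alpha^*-)$. For a complex $B$-vector bundle $p$ of rank $k$, the pullback $\alpha^*p$ is a complex $A$-vector bundle of rank $k$ with Thom space $M(\alpha^*p)=\alpha^*M(p)$, so $\tau'(p):=\tau_A^E(\alpha^*p)\in E_A^{2k}(\alpha^*M(p))$ defines Thom classes for $E_A^*(\alpha^*-)$; naturality, multiplicativity, and normalization for $\tau'$ follow from those for $\tau_A^E$ together with functoriality of $\alpha^*$.

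Now I would apply Okonek's universal property to the pair $(E_A^*(\alpha^*-),\tau')$: there is a unique natural transformation of complex-stable $B$-equivariant cohomology theories $MU_B^*(-)\to E_A^*(\alpha^*-)$ carrying $\tau_B^{MU}(p)$ to $\tau'(p)$. It therefore suffices to check that both composites $res_B^A\circ\theta_B^E$ and $\theta_A^E\circ res_B^A$ (i) commute with suspension isomorphisms, and (ii) send $\tau_B^{MU}(p)$ to $\tau_A^E(\alpha^*p)$. For (i), restriction maps for global (equivalently genuine) equivariant spectra commute with suspension isomorphisms \cite{Sch18}, while the $\theta^E$'s do so by construction, so both composites do. For (ii), on the one hand $\theta_A^E\bigl(res_B^A(\tau_B^{MU}(p))\bigr)=\theta_A^E\bigl(\tau_A^{MU}(\alpha^*p)\bigr)=\tau_A^E(\alpha^*p)$, using restriction-compatibility of the $\MU$-Thom classes and the defining property of $\theta_A^E$; on the other hand $res_B^A\bigl(\theta_B^E(\tau_B^{MU}(p))\bigr)=res_B^A\bigl(\tau_B^E(p)\bigr)=\tau_A^E(\alpha^*p)$, using the defining property of $\theta_B^E$ and the hypothesis that $E$ is weakly global with Thom classes. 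Hence the two composites agree.

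It remains to supply the one non-formal input just used: that Okonek's Thom classes $\tau_A^{MU}$ for the theories $MU_A^*(-)$ are themselves restriction-compatible, i.e.\ $res_B^A(\tau_B^{MU}(p))=\tau_A^{MU}(\alpha^*p)$ (which also establishes that $\MU$ lies in the category, so that it is genuinely the initial object). For this I would invoke the global structure of $\MU$: Schwede \cite[Section~6.1]{Sch18} equips $\MU$ with global Thom classes, and these are restriction-compatible essentially by construction, being built uniformly over all groups from global classifying spaces of complex vector bundles. Under the identification of $\MU$ at level $A$ with tom Dieck's homotopical cobordism $MU_A$, these global Thom classes are carried to the Thom classes of \cite{Oko82}, since both are the tautological Thom classes determined by the Thom-spectrum structure, equivalently the standard complex orientation, of $MU_A$. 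I expect this last identification to be the main obstacle: Okonek's Thom classes are defined in purely bundle-theoretic terms for tom Dieck's $MU_A$, whereas the global Thom classes emerge from the homotopy-orbit orientation of \cite{Hau22}, so matching the two requires carefully tracking complex universes and classifying spaces through both constructions. Everything else in the argument is formal, relying only on the levelwise universal property and elementary naturality properties of restriction maps.
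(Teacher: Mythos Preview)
Your argument is correct and takes a genuinely different route from the paper's. The paper proceeds by direct computation: it unpacks Okonek's explicit formula $T_B(z)=\sigma(V)^{-1}f^*\tau_B(\gamma(V))$ for an element $z$ represented by a map $f\colon S^V\wedge X\to M(\gamma(V))$, and then computes $\alpha^*\circ T_B(z)$ and $T_A\circ\alpha^*(z)$ separately, showing both equal $\sigma(\alpha^*V)^{-1}(\alpha^*f)^*\tau_A(\gamma(\alpha^*V))$. You instead transport the uniqueness clause of Okonek's Lemma~1.6 across $\alpha^*$: you recognize $E_A^*(\alpha^*-)$ as a complex-stable $B$-equivariant theory with Thom classes $\tau'(p)=\tau_A^E(\alpha^*p)$, so that there is a \emph{unique} Thom-class-preserving transformation $MU_B^*(-)\to E_A^*(\alpha^*-)$, and then verify that both composites qualify. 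Your approach is more conceptual and avoids manipulating representatives, at the cost of a small amount of setup (checking that the pulled-back theory really satisfies Okonek's axioms); the paper's approach is more elementary but requires keeping track of suspension isomorphisms and Thom-space identifications by hand. One small point worth tightening: Okonek's uniqueness is for transformations that are natural, stable, \emph{and multiplicative}, so you should also note that both $res_B^A$ and the $\theta^E$'s are ring maps, hence so are the composites---this is immediate but should be said. The verification that $\MU$'s Thom classes are themselves restriction-compatible is handled in both arguments by appeal to Schwede's global construction \cite[Section~6.1]{Sch18}.
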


\begin{proof}
First we must verify that $\MU$ determines a weakly global cohomology theory with Thom classes. This follows by straightforwardly modifying \cite[Construction 6.1.18]{Sch18}, which implies that $\mathbf{MO}$ determines a weakly global cohomology theory with Thom classes for vector bundles over $\R$ instead of over $\C$.

Now let $E$ determine a weakly global cohomology theory with Thom classes $\tau_G$. Okonek \cite[Lemma 1.6]{Oko82} states that for each group $B$ there is a unique natural, stable, and multiplicative transformation $T_B : MU_B^*(-) \rightarrow E_B^*(-)$ such that $T_B(t_B(\gamma(V))) = \tau_B(\gamma(V))$ for each $B$-representation $V$, where $\gamma(V)$ is the canonical bundle over $Gr(V)$.

Let $\alpha : A \rightarrow B$ be any group homomorphism and $z \in MU_B^*(X)$ an arbitrary element represented by some map $f : S^V \wedge X \rightarrow M(\gamma(V))$ into the Thom space of an appropriate canonical $B$-vector bundle. Letting $\sigma(V)$ denote the suspension isomorphisms, we then have \[ \alpha^* \circ T_B(z) = \alpha^* \left( \sigma(V)^{-1} f^* \tau_B(\gamma(V)) \right) \] \[ = \sigma(\alpha^*(V))^{-1} (\alpha^* f)^* \tau_A(\alpha^*(\gamma(V))) \] \[ = \sigma(\alpha^*(V))^{-1} (\alpha^* f)^* \tau_A(\gamma(\alpha^*(V))) \] where the first equality comes from the construction of $T_B$ and the second equality comes from compatibility of restriction maps with suspension isomorphisms, naturality of restriction maps, and compatibility of restriction maps and Thom classes.

Note that $\alpha^*(z)$ is represented by $\alpha^*(f) : S^{\alpha^*(V)} \wedge \alpha^*(X) \rightarrow \alpha^* M(\gamma(V)) = M(\gamma(\alpha^*(V)))$ so we get \[ T_A \circ \alpha^*(z) = \sigma(\alpha^*(V))^{-1} (\alpha^* f)^* \tau_A (\gamma(\alpha^*(V))). \] This shows that the $T_B$ commute with restriction maps.
\end{proof}

Once we know that $\KU$ determines another example of a weakly global cohomology theory with Thom classes, this proof furthermore shows that the associated universal natural transformation is given by the equivariant Conner-Floyd natural transformation at each level.

\begin{proposition}
$\KU$ determines a weakly global cohomology theory with Thom classes which at level $A$ is the equivariant $K$-theory with Thom classes as stated in \cite{Oko82}.
\end{proposition}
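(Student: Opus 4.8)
The plan is to use the classical Atiyah--Bott--Shapiro Thom classes for equivariant $K$-theory as the required weakly global Thom classes, and to establish their compatibility with restriction maps directly. For a complex $A$-vector bundle $p : E \to X$ of complex rank $k$, the ABS construction produces the Thom class $\tau_A(p) \in KU_A^{2k}(M(p))$ represented, under the Bott periodicity identification with reduced $KU_A$-cohomology of $M(p)$ in degree $0$, by the exterior (Koszul) complex $\Lambda^\bullet p$ on the Thom space, which is acyclic off the zero section; this is exactly the Thom class that Okonek attaches to equivariant $K$-theory in \cite{Oko82}. I would first recall, following Atiyah--Segal, that these classes satisfy Okonek's three axioms: naturality is functoriality of $p \mapsto \Lambda^\bullet p$ along bundle maps; multiplicativity reflects the isomorphism $\Lambda^\bullet(p \times q) \cong \Lambda^\bullet p \boxtimes \Lambda^\bullet q$ of Koszul complexes under the external product of Thom spaces $M(p \times q) = M(p) \wedge M(q)$; and normalization, $\tau_A(V) = \sigma(V)(1)$, holds because the complex-stability isomorphism $\sigma(V)$ on $KU_A$ is by construction the equivariant Bott periodicity isomorphism, i.e. multiplication by the ABS Thom class of the representation $V$.

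The substantive point is the restriction compatibility $res_B^A(\tau_B(p)) = \tau_A(\alpha^*(p))$ for a homomorphism $\alpha : A \to B$ and a complex $B$-vector bundle $p$. This I would prove in two steps. First, at the level of bundles there is a canonical identification $\Lambda^\bullet(\alpha^* p) \cong \alpha^*(\Lambda^\bullet p)$ of Koszul complexes, and the Thom space $M(\alpha^* p)$ is the $A$-space obtained by restricting the $B$-action on $M(p)$ along $\alpha$. Second --- and here is where the global structure of $\KU$ is used --- one must know that the restriction map $res_B^A$ on the cohomology theories represented by $\KU$ agrees with the geometric ``restrict the group action'' map on Atiyah--Segal equivariant $K$-theory; this is precisely the statement that Joachim's global $K$-theory spectrum recovers $A \mapsto KU_A$ as a functor on $Ab$, for which I would cite \cite{Joa04} together with Schwede's treatment of global $K$-theory and its Bott classes in \cite[Section 6.3]{Sch18}. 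Combining the two steps, $res_B^A$ sends the class of the complex $\Lambda^\bullet p$ on $M(p)$ to the class of its restriction, namely $\Lambda^\bullet(\alpha^* p)$ on $M(\alpha^* p)$, which is $\tau_A(\alpha^* p)$.

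For a more structural take on the same compatibility, one can instead build the Thom classes from the global Bott classes of \cite[Construction 6.3.46]{Sch18}. Since $A$ is abelian, a complex $A$-representation decomposes as a sum of characters $V = \chi_1 \oplus \cdots \oplus \chi_n$, each $\chi_i$ pulled back along a character $A \to \mathbb{T}$, and $\tau_A(V)$ is the corresponding product of pullbacks of $\beta_{\mathbb{T},\tau}$ twisted by the $\chi_i$; as all of these are restrictions of classes on powers of $\mathbb{T}$, compatibility with restriction is automatic for representations. One then passes from representations to arbitrary complex $A$-vector bundles by the equivariant splitting principle over Grassmannians of representations, exactly as in the proof of \cite[Lemma 1.6]{Oko82} and as in \cite{CGK00}; this is compatible with restriction because pulling back representations along $\alpha$ carries the relevant Grassmannians and canonical bundles to their $A$-equivariant analogues.

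The main obstacle I anticipate is not conceptual but a matter of matching models carefully: one has Joachim's global orthogonal spectrum $\KU$, the Atiyah--Segal equivariant $K$-theories $KU_A$, and Okonek's axiomatic ``$K$-theory with Thom classes,'' and the content of the proposition is that these agree compatibly with restriction once the Thom and complex-stability structures are fixed. The delicate points are identifying the global restriction maps of $\KU$ with the geometric restriction maps on equivariant $K$-theory, and checking that the global complex-stability isomorphism is equivariant Bott periodicity normalized by the ABS class; both are routine but must be pinned down, since the identification of the universal natural transformation with the equivariant Conner--Floyd transformation --- and hence the $A$-Landweber exactness of $KU_A$ --- depends on it.
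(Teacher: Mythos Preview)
Your argument is essentially correct, but it takes a different route from the paper's. The paper does not work directly with the Atiyah--Bott--Shapiro Koszul complex or with the splitting principle; instead it invokes Joachim's result \cite[Theorem 6.9]{Joa04} that $\KU$ receives a global orientation from a global version of $MSpin^c$. Since every complex vector bundle is canonically $Spin^c$-oriented, this global ring map hands you Thom classes for complex bundles at every level $A$ simultaneously, and because the orientation is a map of \emph{global} spectra, compatibility with restriction is automatic from the construction. That is the entire proof in the paper.

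Your approach has the virtue of being more explicit and closer to Okonek's own language: you exhibit the Thom class as the exterior complex and verify naturality, multiplicativity, normalization, and restriction-compatibility by direct inspection of $\Lambda^\bullet p$. This makes the identification with Okonek's Thom classes completely transparent. The cost is exactly the ``matching models'' obstacle you flag at the end: you must separately argue that the restriction maps on Joachim's global spectrum agree with the geometric restriction maps on Atiyah--Segal $K$-theory, and that the complex-stability isomorphism is Bott periodicity normalized by the ABS class. The paper's approach sidesteps this bookkeeping by pushing everything through the global $MSpin^c$-orientation, so that compatibility with restriction is inherited rather than checked. Your second, more structural paragraph (via global Bott classes and the splitting principle) is closer in spirit to the paper's method, but still more hands-on than simply citing the global $MSpin^c$ map.
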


\begin{proof}
It follows from \cite[Theorem 6.9]{Joa04} that $\KU$ determines a weakly global cohomology theory with Thom classes. Namely, $\KU$ ultimately receives its Thom classes from the Thom classes of a global version of $MSpin^c$. Directly from the construction these Thom classes are visibly preserved by restriction maps.
\end{proof}

\begin{corollary}
The equivariant Conner-Floyd transformations $MU_A^* \rightarrow KU_A^*$ commute with all restriction maps.
\end{corollary}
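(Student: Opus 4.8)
The plan is simply to combine the two preceding propositions with the remark following the first of them. First I would invoke the proposition that $\MU$ determines the initial weakly global cohomology theory with Thom classes: it gives, for any global spectrum $E$ carrying Thom classes $\tau_A$ compatible with restriction, Okonek transformations $T_A : MU_A^*(-) \to E_A^*(-)$ (from \cite[Lemma 1.6]{Oko82}) which commute with all restriction maps $\alpha^* : MU_B^*(-) \to MU_A^*(-)$ along group homomorphisms $\alpha : A \to B$. Then I would apply this with $E = \KU$, which by the second proposition does determine such a theory, with level-$A$ Thom classes equal to the equivariant $K$-theory Thom classes of \cite{Oko82}. This already produces a family of transformations $MU_A^*(-) \to KU_A^*(-)$ commuting with restrictions.

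The remaining task is to recognize these transformations as the equivariant Conner-Floyd transformations, and here I would lean on the uniqueness clause in \cite[Lemma 1.6]{Oko82}: a natural, stable, multiplicative transformation $MU_A^*(-) \to KU_A^*(-)$ is determined by its effect on the Thom classes $t_A(\gamma(V))$ of the canonical bundles. Okonek's equivariant Conner-Floyd transformation is natural, stable and multiplicative, and carries $t_A(\gamma(V))$ to the $K$-theoretic Thom class $\tau_A(\gamma(V))$, so it must coincide with $T_A$. This is exactly the content of the remark made just after the first proposition. Concatenating the two observations yields the corollary.

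I do not expect a genuine obstacle here; the substantive argument was already carried out in the proof of the first proposition. The only point requiring a moment's care is confirming that the normalization pinning down $T_A$ — namely $T_A(t_A(\gamma(V))) = \tau_A(\gamma(V))$ — is literally the normalization satisfied by Okonek's Conner-Floyd map, i.e.\ that that map sends $MU$-Thom classes to $K$-theory Thom classes. This is immediate from the construction in \cite{Oko82}, so the corollary is essentially formal once the two propositions are in hand.
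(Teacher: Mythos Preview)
Your proposal is correct and follows exactly the paper's approach: the corollary has no separate proof in the paper, and the intended argument is precisely to combine the two preceding propositions together with the remark (after the first) that the universal transformations $T_A$ coincide with Okonek's Conner--Floyd transformations by the uniqueness clause of \cite[Lemma 1.6]{Oko82}.
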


\subsection{$KU_A$ is $A$-Landweber exact}

The following proposition provides an equivariant extension of the fact that the map $MU^* \rightarrow KU^*$ inducing the Conner-Floyd isomorphism is the same map as the formal group law classification map.

\begin{proposition}
For each group $A$, the Conner-Floyd map $MU_A^* \rightarrow KU_A^*$ agrees with the map which classifies the $A$-equivariant formal group law over $KU_A$.
\end{proposition}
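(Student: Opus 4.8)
The plan is to pass to the global setting, use Hausmann's initiality theorem to pin the map down on tori, and then propagate the identification to all compact abelian Lie groups via a left Kan extension. Write $\psi_A : MU_A^* \to KU_A^*$ for the equivariant Conner-Floyd transformation of \cite{Oko82}, and $\phi_A : MU_A^* \to KU_A^*$ for the composite of Hausmann's isomorphism $MU_A^* \cong L_A$ with the map classifying the $A$-equivariant formal group law over $KU_A$. By the corollary at the end of Section 2.2, the maps $\psi_A$ commute with all restrictions along group homomorphisms, so they assemble into a natural transformation $\psi : \underline{\pi}_*(\MU) \to \underline{\pi}_*(\KU)$ of functors $Ab^{op} \to Rings$; likewise $\phi$ is such a natural transformation, obtained by left Kan extending from tori the map of global group laws supplied by the corollary to Hausmann's theorem.

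\emph{Step 1: agreement on tori.} I would restrict $\psi$ and $\phi$ to the full subcategory $Tori$. Since $\underline{\pi}_*(\MU)|_{Tori}$ is the initial global group law (Hausmann's theorem), it suffices to check that $\psi|_{Tori}$ is a morphism of global group laws, for then it agrees with the unique such morphism, which is $\phi|_{Tori}$. Each $\psi_A$ is a unital ring homomorphism (being multiplicative) and is degree-preserving (being stable), and the $\psi_A$ are natural in $A$ by the previous paragraph, so the only remaining point is that $\psi$ carries the distinguished element $e$ of $\underline{\pi}_*(\MU)$ to that of $\underline{\pi}_*(\KU)$. This is the step at which the two presentations of the data must be reconciled. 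On one hand, by the discussion in Section 2.2 the transformation $\psi$ is identified with the canonical comparison map between the weakly global cohomology theories with Thom classes determined by $\MU$ and by $\KU$, so $\psi$ preserves Thom classes. On the other hand, by the remark following Okonek's Definition 1.2, the element $e$ of a complex-oriented global spectrum is the pullback of the Thom class of the tautological representation $\tau$ of $\mathbb{T}$ along the inclusion $S^0 \hookrightarrow S^\tau$, uniformly for $\MU$ and $\KU$. Combining these with naturality of $\psi$ along $S^0 \hookrightarrow S^\tau$ gives $\psi(e_{\MU}) = e_{\KU}$, so $\psi|_{Tori} = \phi|_{Tori}$. (As a consistency check, over the trivial group both maps recover the classical map $MU^* \to KU^*$, which is at once the Conner-Floyd map and the formal group law classifying map.)

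\emph{Step 2: propagation to all $A$.} Since $\pi_*^A(\MU)$ and $\pi_*^A(\KU)$ are concentrated in even degrees for every compact abelian Lie group $A$, the functors $\underline{\pi}_*(\MU)$ and $\underline{\pi}_*(\KU)$ coincide with the left Kan extensions of their restrictions to $Tori$. In particular, since the source $\underline{\pi}_*(\MU)$ is left Kan extended from $Tori$, restriction induces a bijection between natural transformations $\underline{\pi}_*(\MU) \to \underline{\pi}_*(\KU)$ of functors on $Ab^{op}$ and natural transformations of their restrictions to $Tori$. Applying this to $\psi$ and $\phi$, which agree on $Tori$ by Step 1, I conclude $\psi = \phi$ on all of $Ab^{op}$; in particular $\psi_A = \phi_A$ for every $A$, as claimed.

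The crux of the argument is the verification in Step 1 that the Conner-Floyd transformation respects the distinguished element $e$ --- that is, that Okonek's characterization of the transformation through its action on Thom classes is compatible with Hausmann's presentation of the global group law in terms of Euler classes. Everything else is formal: initiality of $\underline{\pi}_*(\MU)$ among global group laws settles the case of tori, and the evenness-forced left Kan extension property settles all remaining groups. One should also keep track of the cohomological-versus-homological grading conventions, since the global group laws are most naturally phrased on homotopy groups whereas the Conner-Floyd transformation is stated cohomologically; this is a matter of bookkeeping that does not affect the structure of the proof.
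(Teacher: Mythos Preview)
Your argument is correct and takes a genuinely different route for the torus case than the paper does. You invoke the uniqueness part of Hausmann's initiality theorem directly: once you verify that the Conner--Floyd transformation $\psi$ is natural in the group and preserves the distinguished Euler class $e$ (hence is a morphism of global group laws), initiality of $\underline{\pi}_*(\MU)$ forces $\psi|_{Tori} = \phi|_{Tori}$ in one stroke. The paper instead runs an explicit induction on the rank of the torus: at each step it uses the regular-element short exact sequence of a global group law (for both $\MU$ and $\KU$) together with preservation of Euler classes and restrictions to conclude that any discrepancy between the two maps is infinitely divisible by $e_\alpha$, and then appeals to \cite[Corollary 4.10]{HM23} to kill such elements. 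Your approach is cleaner and avoids the auxiliary input from \cite{HM23}; the paper's approach is more hands-on and in effect re-derives the uniqueness statement from the short exact sequences, at the cost of importing a structural fact about $MU_{\T^{n+1}}^*$. Step~2 (propagation to all $A$ via left Kan extension from tori, using evenness) is the same in both proofs.
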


The result for $A = \{ 1 \}$ is known classically. Now the strategy is to prove the result for tori, inducting on dimension, and then left Kan extend the result to general groups $A$.

\begin{proof}
We begin by inducting on $n$. Assume that the formal group law classification map $MU_{\mathbb{T}^n}^* \rightarrow KU_{\mathbb{T}^n}^*$ agrees with the Conner-Floyd map. Now let $\alpha : \mathbb{T}^{n+1} \rightarrow \mathbb{T}^1$ be a split surjective character with kernel $\mathbb{T}^n$.

The formal group law classification map preserves Euler classes because they are defined in terms of the formal group law structure. According to \cite{CGK00}, the Euler classes may also be described as coming from the complex stability isomorphism, ie they are the Thom classes of vector bundles over a point. Thus, the Conner-Floyd maps also preserve Euler classes. In particular, both maps agree on Euler classes.

Since the collection of homotopy groups of $\MU$ and $\KU$ form global group laws, we can form the following diagram \[ \begin{tikzcd}
0 \arrow{r} & MU_{\T^{n+1}} \arrow{r}{e_\alpha \cdot (-)} \arrow{d} & MU_{\T^{n+1}} \arrow{r}{Res_{\T^n}^{\T^{n+1}}} \arrow{d} & MU_{\T^n} \arrow{r} \arrow{d} & 0 \\
0 \arrow{r} & KU_{\T^{n+1}} \arrow{r}{e_\alpha \cdot (-)} & KU_{\T^{n+1}} \arrow{r}{Res_{\T^n}^{\T^{n+1}}} & KU_{\T^n} \arrow{r} & 0 \end{tikzcd} \] We may take the vertical maps to be either the Conner-Floyd maps or the equivariant formal group law maps, and in either case the diagram is commutative.

From this, we deduce that any element of $MU_{\T^{n+1}}$ for which the values of the two maps under consideration do not agree must be infinitely divisible by $e_\alpha$. By \cite[Corollary 4.10]{HM23} such an element must be zero. So, the Conner-Floyd maps agree with the formal group law classification maps when $A$ is a torus.

Now let $A$ be any compact abelian Lie group. Then, since $\pi_*^A(\MU)$ and $\pi_*^A(\KU)$ are given by left Kan extending the respective functors $\underline{\pi}_*(\MU), \underline{\pi}_*(\KU) : Tori^{op} \rightarrow Rings$, they are described as colimits \[ \pi_*^A(\MU) = colim_{A \rightarrow \T^n} MU_{\T^n}^* \] \[ \pi_*^A(\KU) = colim_{A \rightarrow \T^n} KU_{\T^n}^*. \] Furthermore, for the ring $MU_{\T^n}^*$ indexed by $\beta : A \rightarrow \T^n$ appearing in the colimit, the canonical map to the colimit $MU_{\T^n}^* \rightarrow MU_A^*$ is the restriction along $\beta$, and similarly for $\KU$. 

The Conner-Floyd maps and the formal group law maps both specify the same map of diagrams $\{ MU_{\T^n} \}_{A \rightarrow \T^n} \rightarrow \{ KU_{\T^n} \}_{A \rightarrow \T^n}$, so they specify the same map from $\{ MU_{\T^n} \}_{A \rightarrow \T^n}$ to the constant diagram at $KU_A^*$. By the universal property of colimits, there is a unique map $MU_A^* \rightarrow KU_A^*$ which extends the diagram map to a map of cones on the diagram. Both the Conner-Floyd map and the formal group law classification map give such an extension, hence these maps are equal.
\end{proof}

\begin{corollary}
The global group law map $\underline{\pi}_*(\MU) \rightarrow \underline{\pi}_*(\KU)$ induces isomorphisms $KU_A^*(-) \cong \MU_A^*(-) \otimes_{MU_A^*} KU_A^*$ of cohomology theories on compact $A$-spaces.
\end{corollary}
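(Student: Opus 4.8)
The plan is to read this off from the preceding Proposition together with Okonek's theorem, so the argument will be brief. First I would recall from \cite[Lemma 1.6]{Oko82}, as used above in the discussion of weakly global cohomology theories with Thom classes, that for each compact abelian Lie group $A$ there is a unique natural, stable, and multiplicative transformation $T_A : MU_A^*(-) \to KU_A^*(-)$ taking Thom classes to Thom classes; this is the equivariant Conner--Floyd transformation, and multiplicativity makes it $MU_A^*$-linear once $KU_A^*(X)$ is regarded as an $MU_A^*$-module through the coefficient map of $T_A$. Consequently $T_A$ factors through a natural transformation
\[ \Phi_A : MU_A^*(X) \otimes_{MU_A^*} KU_A^* \longrightarrow KU_A^*(X). \]

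Next I would identify the coefficient map $MU_A^* \to KU_A^*$ underlying $T_A$. By the preceding Proposition this map is exactly the classifying map of the $A$-equivariant formal group law over $KU_A^*$, and by the Corollary to Hausmann's theorem (the identification $L_A \cong MU_A^*$) this classifying map is precisely the level-$A$ component of the global group law map $\underline{\pi}_*(\MU) \to \underline{\pi}_*(\KU)$. Therefore $\Phi_A$ is exactly the base-change map determined by that global group law map, which is what the statement refers to. Okonek \cite{Oko82} shows that $\Phi_A$ is an isomorphism for every compact $A$-space $X$, and hence --- being naturally isomorphic to the cohomology theory $KU_A^*(-)$ --- the left-hand side is a cohomology theory and the isomorphism is natural in $X$.

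The only point that needs care is bookkeeping rather than a genuine obstacle: a bare ring map on coefficients need not lift to a natural transformation of cohomology theories, so one must check that the transformation realizing $\Phi_A$ really is the Conner--Floyd transformation $T_A$ whose coefficient map was pinned down in the Proposition. This is guaranteed by the uniqueness clause of \cite[Lemma 1.6]{Oko82}. The substantive content --- that $\Phi_A$ is an isomorphism --- is Okonek's; naturality of the whole package in $A$ along restriction maps, should one want it, is the content of the Corollary above, though it is not needed for the present statement.
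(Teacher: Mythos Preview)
Your proposal is correct and takes essentially the same approach as the paper: the paper's proof is the one-liner ``This follows from the above proposition, combined with Theorem 3.6 of \cite{Oko82},'' and your argument is a careful unpacking of exactly that---identifying the coefficient map of Okonek's Conner--Floyd transformation with the level-$A$ component of the global group law map via the preceding Proposition, and then invoking Okonek's isomorphism. Your extra bookkeeping about why the natural transformation in question really is $T_A$ is accurate but not strictly needed, since the paper has already set things up so that the only map in play is the Conner--Floyd map.
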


\begin{proof}
This follows from the above proposition, combined with Theorem 3.6 of \cite{Oko82}.
\end{proof}

We take this Corollary as our model for the following definitions.

\begin{definition}
An $A$-equivariant formal group law classified by $f: L_A \rightarrow k$ is $A$-Landweber exact if $X(-) := MU_A^*(-) \otimes_{MU_A^*} X(A)$ is a cohomology theory on compact $A$-spaces. A global group law $X$ is weakly globally Landweber exact if for each compact abelian Lie group $A$, the canonical map $L_A \rightarrow X(A)$ determines a cohomology theory as above.
\end{definition}

\begin{definition}
A complex oriented $A$-spectrum is $A$-Landweber exact if its associated $A$-equivariant formal group law is. A complex oriented global spectrum is weakly globally Landweber exact if its associated global group law is.
\end{definition}

\begin{corollary}
$KU_A$ and $MU_A$ are $A$-Landweber exact. $\KU$ and $\MU$ are weakly globally Landweber exact.
\end{corollary}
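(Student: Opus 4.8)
The plan is to assemble the pieces already established in this section; there is no genuinely new input required. For $MU_A$ the statement is tautological: the $A$-equivariant formal group law of $MU_A$ is classified by the canonical map $L_A \cong MU_A^* \to MU_A^*$, which under Hausmann's identification $MU_A^* \cong L_A$ is the identity, and the functor $X(-) := MU_A^*(-) \otimes_{MU_A^*} MU_A^* \cong MU_A^*(-)$ is manifestly a cohomology theory on compact $A$-spaces (it is represented by $MU_A$). For the same reason $\MU$ is weakly globally Landweber exact: for every compact abelian Lie group $A$ the canonical map $L_A \to \underline{\pi}_*(\MU)(A) = MU_A^*$ is an isomorphism, so the associated tensor functor is again just $MU_A^*(-)$.

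For $KU_A$ I would argue as follows. By the Proposition just proved, the Conner--Floyd map $MU_A^* \to KU_A^*$ coincides with the map classifying the $A$-equivariant formal group law over $KU_A^*$, which is exactly the canonical map $L_A \cong MU_A^* \to KU_A^*$. By the Corollary immediately following it --- which invokes Okonek's Theorem 3.6 of \cite{Oko82} --- this map induces a natural isomorphism $KU_A^*(-) \cong MU_A^*(-) \otimes_{MU_A^*} KU_A^*$ of functors on compact $A$-spaces. Since $KU_A^*(-)$ is a cohomology theory, so is $X(-) := MU_A^*(-) \otimes_{MU_A^*} KU_A^*$, and this is precisely the condition that the $A$-equivariant formal group law of $KU_A$ be $A$-Landweber exact. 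Hence $KU_A$ is $A$-Landweber exact for every compact abelian Lie group $A$.

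For $\KU$, weak global Landweber exactness asks that, for every compact abelian Lie group $A$, the canonical map $L_A \to \underline{\pi}_*(\KU)(A)$ determine a cohomology theory via the same tensor construction. I would dispatch this by reducing it to the $KU_A$ case already settled, once I check that $\underline{\pi}_*(\KU)(A) = KU_A^*$ and that the canonical map is indeed the Conner--Floyd map. This is guaranteed because $\pi_*^A(\KU)$ is even for every group $A$, so (as recalled in the review of global methods) $\underline{\pi}_*(\KU)$ agrees with the left Kan extension of the associated global group law, and the structure map is the one identified in the Proposition. Granting this, the $\KU$ statement is the conjunction over all $A$ of the $KU_A$ statement, and $\MU$ is handled identically using evenness of $\pi_*^A(\MU)$.

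The only point requiring care --- and the closest thing to an obstacle --- is this last bookkeeping step: confirming that the ``canonical map $L_A \to X(A)$'' in the definition of weak global Landweber exactness is literally the Conner--Floyd / formal-group-law classification map for which the preceding results were proved, and not some a priori different map arising through the left Kan extension. Once evenness of $\pi_*^A(\KU)$ (resp. $\pi_*^A(\MU)$) is invoked to pin this down, everything else is a direct citation of the preceding Proposition and Corollary, so the proof is short.
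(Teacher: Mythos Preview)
Your proposal is correct and matches the paper's approach: the paper states this corollary without proof, treating it as an immediate consequence of the preceding Proposition (identifying the Conner--Floyd map with the formal group law classification map) and its Corollary (the Okonek isomorphism), together with the tautological case of $MU_A$. Your write-up simply spells out what the paper leaves implicit, including the evenness bookkeeping for the global statements, which the paper addresses earlier in the review of global methods.
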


We use the adjective "weakly" because it is unclear how to recover a cohomology theory on global spaces or spectra from this condition. Hopefully, there is either some method to construct a global spectrum from a collection of $A$-spectra related by restriction maps, or some notion of "strong globally Landweber exact" which more directly implies the existence of a global spectrum. An example of the former would be reconstructing $\mathbf{KU}$ from the $KU_A$. A possible approach for the latter would be a global Mackey functor perspective on global Landweber exactness.

Note that since $MU_A^*$ is concentrated in even degrees \cite{Lof73} \cite{Com96}, so are the homotopy groups of any $A$-Landweber exact spectrum. In particular, the $A$-equivariant homotopy groups of any weakly globally Landweber exact spectrum for general $A$ are left Kan extended from the $A$-equivariant homotopy groups for tori.

\subsection{Observations on $A$-Landweber exact spectra}

Later on it will be useful to convert between the homological and the cohomological perspective. When $A$ is the trivial group, the following proposition connects our cohomological perspective to the more traditional homological perspective on Landweber exactness.

Let $E$ a complex oriented $A$-spectrum with $A$-equivariant formal group law classification map $MU_A^* \rightarrow E^*$ (equivalently $(MU_A)_* \rightarrow E_*)$ with orientation class in degree $2$. By \cite[Theorem 1.2]{CGK02} there is a ring map $MU_A \rightarrow E$ inducing the complex orientation of $E$. This ring map induces natural transformations \[ MU_A^*(-) \otimes_{MU_A^*} E^* \rightarrow E^*(-) \] and \[ (MU_A)_*(-) \otimes_{(MU_A)_*} E_* \rightarrow E_*(-) \] of functors on compact $A$-spectra.

\begin{proposition}
\label{Bridges}
For $E$ a complex oriented $A$-spectrum, (1), (2), and (3) are equivalent, and each implies (4).
\begin{enumerate}
\item $E^*(-) \cong MU_A^*(-) \otimes_{MU_A^*} E^*$ as functors on compact $A$-spectra.
\item $E^*(-) \cong MU_A^*(-) \otimes_{MU_A^*} E^*$ as functors on compact $A$-spaces.
\item $E_*(-) \cong (MU_A)_*(-) \otimes_{(MU_A)_*} E_*$ as functors on compact $A$-spectra.
\item $E_*(-) \cong (MU_A)_*(-) \otimes_{(MU_A)_*} E_*$ as functors on compact $A$-spaces.
\end{enumerate}
\end{proposition}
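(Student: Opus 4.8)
The plan is to prove $(1)\Rightarrow(2)$, $(3)\Rightarrow(4)$, $(1)\Leftrightarrow(3)$, and $(2)\Rightarrow(1)$, interpreting each displayed isomorphism as the assertion that the canonical natural transformations $\eta_X\colon MU_A^*(X)\otimes_{MU_A^*}E^*\to E^*(X)$ and $\mu_X\colon (MU_A)_*(X)\otimes_{(MU_A)_*}E_*\to E_*(X)$ built from the ring map $MU_A\to E$ are isomorphisms; then $(1)$ and $(2)$ imply $(4)$ via $(3)$. The implications $(1)\Rightarrow(2)$ and $(3)\Rightarrow(4)$ are immediate: for a compact $A$-space $X$ the suspension spectrum $\Sigma^\infty X_+$ is a compact $A$-spectrum, and under $E^*(X)=E^*(\Sigma^\infty X_+)$, $MU_A^*(X)=MU_A^*(\Sigma^\infty X_+)$ the transformation $\eta_X$ for the space is $\eta_{\Sigma^\infty X_+}$, and similarly in homology. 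For $(1)\Leftrightarrow(3)$ I would use Spanier--Whitehead duality on the symmetric monoidal category of compact $A$-spectra: for dualizable $X$ there are natural isomorphisms $E^n(X)\cong E_{-n}(DX)$ and $MU_A^n(X)\cong(MU_A)_{-n}(DX)$, and since $E$ is an $MU_A$-module these carry $\eta_X$ to $\mu_{DX}$ after the regradings $E^*=E_{-*}$, $MU_A^*=(MU_A)_{-*}$; as $D$ is a self-equivalence of compact $A$-spectra, $\eta$ is an isomorphism on all of them iff $\mu$ is.

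The substantive point is $(2)\Rightarrow(1)$, promoting an isomorphism on spaces to one on genuine $A$-spectra. Let $\mathcal{C}$ be the full subcategory of compact $A$-spectra on which $\eta$ is an isomorphism. Since $E^*(-)$ and $MU_A^*(-)\otimes_{MU_A^*}E^*$ are additive and carry integer suspension isomorphisms intertwined by $\eta$, the class $\mathcal{C}$ is closed under retracts and under $\Sigma^{\pm 1}$. The essential observation is that the complex orientation of $E$ is the image under $MU_A\to E$ of that of $MU_A$, so for every complex $A$-representation $U$ both theories carry Thom isomorphisms along $X\mapsto S^U\wedge X$ and $\eta$ intertwines them; consequently $\eta_X$ is an isomorphism if and only if $\eta_{S^U\wedge X}$ is. Now I would invoke the structure theory of finite $A$-CW spectra: every compact $A$-spectrum $T$ is a retract of $\Sigma^{-W}\Sigma^\infty X$ for some $A$-representation $W$ and finite pointed $A$-CW complex $X$. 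Taking $U=W\otimes_{\R}\C$, whose underlying real $A$-representation is equivariantly $W\oplus W$, we get that $S^U\wedge T$ is a retract of $\Sigma^W\Sigma^\infty X=\Sigma^\infty(S^W\wedge X)$, the suspension spectrum of a finite pointed $A$-CW complex; this lies in $\mathcal{C}$ by hypothesis $(2)$, so $S^U\wedge T\in\mathcal{C}$ by closure under retracts, and hence $T\in\mathcal{C}$ by the Thom-isomorphism biconditional. This gives $(1)$.

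I expect this last implication to be the main obstacle, and the argument above is designed to dodge the two obvious traps. First, unlike the nonequivariant case, a compact $A$-spectrum need not be an integer desuspension of a suspension spectrum---objects such as $S^{-\sigma}$ have genuinely $RO(A)$-graded cells---so one cannot reduce to spaces by a Freudenthal argument alone. Second, $MU_A^*(-)\otimes_{MU_A^*}E^*$ is not visibly a cohomology theory on compact $A$-spectra---proving this would amount to an equivariant Landweber exact functor theorem, which we do not establish---so a cellular induction through arbitrary cofiber sequences is unavailable. The device above avoids both by using only complex representation spheres, where complex orientability supplies the Thom isomorphisms outright, and by absorbing an arbitrary representation $W$ into the complex representation $W\otimes_{\R}\C$: this replaces the troublesome $\Sigma^{-W}\Sigma^\infty X$ with $\Sigma^{W}\Sigma^\infty X$, which is again the suspension spectrum of a finite complex and hence governed by the hypothesis on spaces. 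The same reasoning yields $(4)\Rightarrow(3)$ as well, so all four conditions are in fact equivalent; we record only what is needed in the sequel.
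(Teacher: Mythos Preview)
Your proof is correct and structurally matches the paper's: $(1)\Rightarrow(2)$ and $(3)\Rightarrow(4)$ are immediate, $(1)\Leftrightarrow(3)$ follows from Spanier--Whitehead duality on dualizable $A$-spectra, and the work lies in $(2)\Rightarrow(1)$. For that implication the paper is extremely terse---it asserts that compact $A$-spectra are obtained from suspension spectra of finite $A$-spaces via a filtered colimit and that both sides of (1) are compatible with this passage---whereas you supply an explicit mechanism using the complex orientation.

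Your complexification device is correct but heavier than required, and the concern driving it is misplaced. Finite $A$-CW spectra are, by definition, built from cells $\Sigma^n(A/H)_+$ with $n\in\Z$, not $n\in RO(A)$; representation spheres such as $S^{-\sigma}$ already admit integer-graded cell structures (for $A=C_2$ one has the cofiber sequence $\Sigma^{-1}(C_2)_+\to S^{-\sigma}\to S^0$, so $\Sigma S^{-\sigma}$ is a genuine suspension spectrum). Hence every compact $A$-spectrum is a retract of $\Sigma^{-N}\Sigma^\infty X$ for an \emph{integer} $N$ and a finite $A$-CW complex $X$, and closure of your class $\mathcal{C}$ under retracts and integer suspension alone already gives $(2)\Rightarrow(1)$; the Thom isomorphisms coming from the complex orientation of $E$ are not needed at this step. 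What your argument buys is robustness---it goes through regardless of one's conventions on equivariant cells---and, as you note, the same reasoning yields $(4)\Rightarrow(3)$, which is slightly more than the paper records.
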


\begin{proof}
Clearly (1) implies (2), and (3) implies (4). By Spanier-Whitehead duality, (1) and (3) are equivalent. Since every compact $A$-spectrum may be written as a filtered colimit of suspension spectra of compact $A$-spectra and both sides of (1) commute with filtered colimits, (2) implies (1).
\end{proof}

We will use the following uniqueness theorem later to identify our construction of $BP_A$ with May's construction $MU_A \wedge_{MU} BP$ from \cite{May98}.

\begin{theorem}
\label{Uniqueness}
Let $E$ and $E'$ be $A$-equivariant complex oriented spectra with orientation class in degree $2$ and $MU_A \rightarrow E$, $MU_A \rightarrow E'$ the associated ring maps of \cite[Theorem 1.2]{CGK02}. If $E$ is $A$-Landweber exact and $E^*$ and $E'^*$ are isomorphic as rings under $MU_A^*$, then $E$ and $E'$ are weakly equivalent as ring $A$-spectra under $MU_A$.
\end{theorem}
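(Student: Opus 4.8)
The plan is to build a map $f:E\to E'$ of $A$-spectra that realizes the given ring isomorphism $\theta:E^*\xrightarrow{\sim}E'^*$, to check that it is a weak equivalence by evaluating on orbits, and then to promote it to a map of ring $A$-spectra under $MU_A$ by invoking the absence of phantom maps.

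First I would note that $E'$ is itself $A$-Landweber exact, since the condition depends only on $E'^*$ as an $MU_A^*$-algebra and $E'^*\cong E^*$ as such. Consequently, by Proposition \ref{Bridges} (or directly from the definition), for every compact $A$-spectrum $X$ the associated ring maps give natural multiplicative isomorphisms $E^*(X)\cong MU_A^*(X)\otimes_{MU_A^*}E^*$ and $E'^*(X)\cong MU_A^*(X)\otimes_{MU_A^*}E'^*$ compatible with the transformations from $MU_A^*(X)$. Tensoring the first with $\theta$ produces a natural multiplicative isomorphism $\theta_X:E^*(X)\xrightarrow{\sim}E'^*(X)$ of cohomology theories on compact $A$-spectra, commuting with the transformations from $MU_A^*(-)$. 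Since $E$ and $E'$ represent these cohomology theories, equivariant Brown representability (on finite $A$-spectra) furnishes a map $f:E\to E'$ inducing $\theta_{(-)}$. Evaluating at $X=\Sigma^n A/H_+$ shows $f_*:\pi_n^H(E)\to\pi_n^H(E')$ is an isomorphism for every closed subgroup $H\le A$ and every $n$, so $f$ is a weak equivalence of $A$-spectra.

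It then remains to arrange that $f$ is compatible with the multiplications and with the maps from $MU_A$. The map $f$ is not unique (distinct choices differ by phantom maps $E\to E'$), but this ambiguity is exactly what the no-phantom-maps result of Section 4 controls, since $MU_A$, $E$, and $E'$ are all $A$-Landweber exact. Concretely, the two maps $f\circ(MU_A\to E)$ and $MU_A\to E'$ restrict to the same map on every finite subspectrum $W\hookrightarrow MU_A$, because on the cohomology of compacts both induce $x\mapsto x\otimes 1$ (using that $\theta$ is unital, so $\theta(1)=1$), and by Brown representability a class on a finite spectrum is detected by these restrictions; hence their difference is phantom and therefore null, so $f$ is a map under $MU_A$. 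Likewise $f\circ\mu_E$ and $\mu_{E'}\circ(f\wedge f)$ agree on $X\wedge Y$ for all finite $X,Y\subseteq E$ by multiplicativity of $\theta_{(-)}$, hence on every finite subspectrum of $E\wedge E$, so their difference $E\wedge E\to E'$ is phantom and thus null (here one uses the vanishing of phantom maps into the $A$-Landweber exact spectrum $E'$). Therefore $f$ is a map of ring $A$-spectra under $MU_A$, and being a weak equivalence, it exhibits the desired equivalence; one could also run this argument in the category of cellular $MU_A$-modules, at the cost of controlling $\mathrm{Tor}$- and $\mathrm{Ext}$-groups over $MU_A^*$.

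The main obstacle is this last step: extracting an honest map of ring spectra under $MU_A$ from a natural isomorphism of cohomology theories, rather than merely an equivalence of underlying $A$-spectra. This is precisely where the vanishing of phantom maps from Section 4 is essential; the identities showing that the relevant discrepancies are phantom are routine once one unwinds that $\theta$ is a unital $MU_A^*$-algebra homomorphism and that Brown representability identifies cohomology classes on finite $A$-spectra with maps of spectra.
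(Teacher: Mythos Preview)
Your proposal has a genuine gap at the outset. You argue that $E'$ is $A$-Landweber exact because ``the condition depends only on $E'^*$ as an $MU_A^*$-algebra'', and then conclude (citing Proposition~\ref{Bridges}) that the natural map
\[
MU_A^*(X)\otimes_{MU_A^*}E'^*\longrightarrow E'^*(X)
\]
is an isomorphism for all compact $X$. The first sentence is true under the paper's definition: Landweber exactness of the associated formal group law only requires the tensor functor to be a cohomology theory, which is visibly a condition on $E'^*$ as an $MU_A^*$-algebra. But the second sentence does \emph{not} follow. Nothing in the definition, and nothing in Proposition~\ref{Bridges} (which only records that several formulations of this isomorphism are equivalent), tells you that the given spectrum $E'$ actually represents that tensor cohomology theory. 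Establishing this is essentially the content of the theorem: a priori there could be two inequivalent complex oriented $A$-spectra with the same $MU_A^*$-algebra $E'^*$, only one of which satisfies $E'^*(-)\cong MU_A^*(-)\otimes_{MU_A^*}E'^*$. Once you assume this isomorphism for $E'$, the weak equivalence $E\simeq E'$ is immediate, so you have assumed the hard part.

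The paper's argument sidesteps this by never asserting that the last arrow $MU_A^*(-)\otimes_{MU_A^*}E'^*\to E'^*(-)$ is an isomorphism. It only uses that this arrow exists (from the orientation of $E'$), checks that the composite $E^*(-)\to E'^*(-)$ is an isomorphism on the sphere, and then, to obtain the isomorphism on $\pi_*^B$ for each closed $B\subset A$, restricts to $B$-spectra and invokes the fact that $res_B^A(E)$ is $B$-Landweber exact (which follows from applying the $A$-Landweber isomorphism for $E$ to spectra of the form $A_+\wedge_B(-)$). This is the step your argument is missing. For the ``under $MU_A$'' compatibility the paper simply quotes \cite[Theorem 1.2]{CGK02}: complex orientations correspond bijectively to ring maps out of $MU_A$, and the constructed equivalence preserves the orientation by design. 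Your detour through the phantom-map results of Section~4 is logically independent of this theorem but is not needed here, and it introduces a further loose end: to kill the discrepancy $f\circ\mu_E - \mu_{E'}\circ(f\wedge f)$ as a phantom map you would need the source $E\wedge E$ to be complex generated, which is not established in the paper.
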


\begin{proof}
By assumption we have a natural transformation of multiplicative cohomology theories \[ E^*(-) \cong MU_A^*(-) \otimes_{MU_A^*} E^* \cong MU_A^*(-) \otimes_{MU_A^*} E'^* \rightarrow E'^*(-) \] which is clearly an isomorphism when the input is the sphere spectrum. 

Let $B \subset A$ a closed subgroup. Then the forgetful functor from $A$-spectra to $B$-spectra carries the $A$-orientations $MU_A \rightarrow E$ and $MU_A \rightarrow E'$ to $B$-orientations $MU_B \rightarrow res_B^A(E)$ and $MU_B \rightarrow res_B^A(E')$. Since $res_B^A(E)$ is $B$-Landweber exact, we may repeat the above argument, concluding that our original natural transformation induces isomorphisms \[ \pi_*^B(E) = \pi_*^B(res_B^A(E)) \cong \pi_*^B(res_B^A(E')) = \pi_*^B(E') \] Therefore this natural tranformation defines a weak equivalence of ring $A$-spectra $E \cong E'$.

Finally, by \cite[Theorem 1.2]{CGK02}, $A$-equivariant complex orientations of $A$-Landweber exact spectra correspond bijectively with maps of ring $A$-spectra from $MU_A$. By construction, our weak equivalence $E \cong E'$ preserves the respective complex orientations, hence is an equivalence under $MU_A$.
\end{proof}

All of the $A$-equivariantly complex oriented spectra in this paper have orientation classes in degree $2$. In particular, this property is enjoyed by any $A$-Landweber exact spectrum, and by any $A$-equivariantly complex oriented spectrum arising from May's construction \cite{May98} via the following proposition.

\begin{proposition}
\label{MayCpxOri}
Let $E$ be a complex oriented ring spectrum whose formal group law is classified by $\phi : MU^* \rightarrow E^*$. Then May's construction $MU_A \wedge_{MU} E$ \cite[Theorem 1.1]{May98} is an $A$-equviariant complex oriented ring $A$-spectrum whose $A$-equivariant formal group law is classified by \[ MU_A^* \cong MU_A^* \otimes_{MU^*} MU^* \xrightarrow{Id \otimes \phi} MU_A^* \otimes_{MU^*} E^* \cong E^* \]
\end{proposition}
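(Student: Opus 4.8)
The plan is to unwind the construction of May's smash product $MU_A \wedge_{MU} E$ and track what it does to the formal group law. First I would recall from \cite[Theorem 1.1]{May98} that $MU_A \wedge_{MU} E$ is formed using the ring map $MU \to MU_A$ (the underlying nonequivariant spectrum of $MU_A$ receives a canonical map from $MU$, essentially because $MU_A$ is complex oriented and a Landweber-type base change) and the ring map $MU \to E$ classifying $\phi$. The resulting $A$-spectrum is a ring $A$-spectrum under $MU_A$, via the left factor, so composing $MU_A \to MU_A \wedge_{MU} E$ with this ring structure gives a canonical complex orientation of $MU_A \wedge_{MU} E$ whose orientation class is the image of the orientation class of $MU_A$; in particular it lives in degree $2$ since $MU_A$'s does. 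Then by the universal property of $MU_A^* \cong L_A$ (Hausmann's theorem quoted above), the $A$-equivariant formal group law of $MU_A \wedge_{MU} E$ is classified by the unique ring map $MU_A^* \to (MU_A \wedge_{MU} E)^*$ extending the orientation, which is exactly the map induced on homotopy by $MU_A \to MU_A \wedge_{MU} E$.

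Next I would compute $(MU_A \wedge_{MU} E)^*$ as a ring under $MU_A^*$. The key input is that $E$ is Landweber exact nonequivariantly, so $E_* \cong MU_* \otimes_{MU_*} E_*$ is flat over $MU_*$, hence $\pi_*(MU_A \wedge_{MU} E) \cong \pi_*(MU_A) \otimes_{MU_*} E_*$ — here I use that $MU_A$, being even, has $\pi_*(MU_A)$ which together with the flatness of $E_*$ over $MU_*$ makes the relevant Künneth/Tor spectral sequence collapse. This identification is compatible with the $MU_A$-module structure, so as a ring under $MU_A^*$ we get $(MU_A \wedge_{MU} E)^* \cong MU_A^* \otimes_{MU^*} E^*$, with the structure map being $x \mapsto x \otimes 1$. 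Unwinding the two displayed isomorphisms in the statement, the composite $MU_A^* \cong MU_A^* \otimes_{MU^*} MU^* \xrightarrow{\mathrm{Id}\otimes\phi} MU_A^* \otimes_{MU^*} E^* \cong E^*$ is precisely this structure map, which is the formal group law classification map by the previous paragraph.

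The main obstacle I anticipate is the identification $\pi_*(MU_A \wedge_{MU} E) \cong \pi_*(MU_A) \otimes_{MU_*} E_*$ as rings under $MU_A^*$ — one must be careful that May's construction genuinely produces this base change on homotopy (which requires either flatness of $E_*$ over $MU_*$ from nonequivariant Landweber exactness, or an explicit appeal to how \cite{May98} sets things up), and that the ring and module structures match. A secondary subtlety is checking that the orientation of $MU_A \wedge_{MU} E$ used implicitly in Proposition \ref{MayCpxOri} is the one coming from the $MU_A$-algebra structure, as opposed to some other orientation; but since \cite[Theorem 1.2]{CGK02} says orientations in degree $2$ correspond to ring maps from $MU_A$, and the canonical map $MU_A \to MU_A \wedge_{MU} E$ is the obvious such ring map, this is forced. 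Everything else is diagram-chasing with the universal property of $L_A$ and the coherence of the unitor isomorphisms $MU_A^* \otimes_{MU^*} MU^* \cong MU_A^*$ and $MU_A^* \otimes_{MU^*} E^* \cong E^*$, which I would not belabor.
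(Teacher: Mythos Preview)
Your approach is essentially the same as the paper's: obtain the ring map $MU_A \to MU_A \wedge_{MU} E$ from the monoidal structure of May's construction, invoke \cite[Theorem 1.2]{CGK02} for the complex orientation, and identify the induced map on homotopy with $\mathrm{Id}\otimes\phi$ via a K\"unneth spectral sequence. The paper's proof is organized the same way.

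There is one point to correct. You collapse the K\"unneth spectral sequence by assuming $E$ is nonequivariantly Landweber exact, so that $E_*$ is flat over $MU_*$. But the proposition as stated does not assume this --- $E$ is merely complex oriented. The paper instead appeals directly to \cite{May98}, where the isomorphism $\pi_*^A(MU_A \wedge_{MU} E) \cong MU_A^* \otimes_{MU^*} E^*$ holds for \emph{any} $MU$-module $E$ when $A$ is a compact abelian Lie group, because $MU_A^*$ is free over $MU^*$ (this is the result of L\"offler and Comeza\~na cited elsewhere in the paper). So the collapse comes from flatness on the $MU_A$ side, not the $E$ side. You flagged this as a possible alternative (``an explicit appeal to how \cite{May98} sets things up''), and indeed that is the correct route: drop the Landweber exactness hypothesis and cite \cite{May98} for the identification of homotopy, then use naturality of the K\"unneth spectral sequence in the map $MU \to E$ to identify the induced ring map with the base change of $\phi$. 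With that fix your argument matches the paper's.
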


\begin{proof}
May's construction $MU_A \wedge_{MU} (-)$ is both functorial for $MU$-modules and monoidal. Thus we obtain a ring map $MU_A \cong MU_A \wedge_{MU} MU \rightarrow MU_A \wedge_{MU} E$. By \cite[Theorem 1.2]{CGK02}, this determines an $A$-equivariantly complex oriented cohomology theory.

According to \cite{May98}, the isomorphisms $MU_A^* \otimes_{MU^*} MU^* \cong \pi_*^A( MU_A )$ and $\pi_*^A ( MU_A \wedge_{MU} E ) \cong MU_A^* \otimes_{MU^*} E^*$ arise ultimately from a K\"{u}nneth spectra sequence. Naturality of this spectral sequence with respect to $MU \rightarrow E$ shows that the induced map is precisely the base change of $\phi$
\end{proof}

\begin{conjecture}
If $E$ is a Landweber exact spectrum, then $MU_A \wedge_{MU} E$ is an $A$-Landweber exact spectrum.
\end{conjecture}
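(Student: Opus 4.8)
The plan is to run, uniformly for every Landweber exact $E$, the argument that governs the hand-checked examples: base-change the formal group law along $MU_* \to E_*$, set up the relevant Künneth spectral sequence, and kill its higher $\mathrm{Tor}$-terms using Landweber's criterion. Write $F := MU_A \wedge_{MU} E$. By Proposition \ref{MayCpxOri}, $F$ is an $A$-equivariantly complex oriented ring $A$-spectrum with orientation class in degree $2$, with $F_* \cong (MU_A)_* \otimes_{MU_*} E_*$ as rings under $(MU_A)_*$, and with a canonical ring map $MU_A \to F$. By Proposition \ref{Bridges} it suffices to verify condition (3) there, i.e.\ that the natural transformation
\[ \eta_X \colon (MU_A)_*(X) \otimes_{(MU_A)_*} F_* \;\longrightarrow\; F_*(X) \]
on compact $A$-spectra is an isomorphism; note its target is automatically a homology theory, and since $F_* \cong (MU_A)_* \otimes_{MU_*} E_*$ its source is canonically $(MU_A)_*(X) \otimes_{MU_*} E_*$.

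First I would observe that, for a compact $A$-spectrum $X$, the $MU_A$-module $F = MU_A \wedge_{MU} E$ satisfies $X \wedge F \simeq (X \wedge MU_A) \wedge_{MU} E$, where $X \wedge MU_A$ is an $MU$-module $A$-spectrum with $\pi_*^A(X \wedge MU_A) = (MU_A)_*(X)$. Extending the Künneth spectral sequence that underlies Proposition \ref{MayCpxOri} from the module $MU_A$ to the module $X \wedge MU_A$ should yield a spectral sequence, multiplicative and natural in $X$,
\[ E^2_{s,t} = \mathrm{Tor}^{MU_*}_{s,t}\!\big((MU_A)_*(X),\, E_*\big) \;\Longrightarrow\; F_{s+t}(X), \]
whose $s = 0$ edge homomorphism is $\eta_X$. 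The conjecture is then equivalent to the vanishing $\mathrm{Tor}^{MU_*}_s\!\big((MU_A)_*(X), E_*\big) = 0$ for all $s > 0$ and all compact $A$-spectra $X$.

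To obtain this vanishing I would use Landweber exactness of $E$ exactly as in the nonequivariant phantom-map argument (cf.\ \cite{Lur10}): the Landweber filtration theorem together with regularity of the Hasse sequences gives $\mathrm{Tor}^{MU_*}_s(N, E_*) = 0$ for all $s > 0$ and all finitely presented $(MU_*, MU_* MU)$-comodules $N$, hence for all comodules $N$, since any comodule is a filtered colimit of finitely presented subcomodules and $\mathrm{Tor}$ commutes with filtered colimits. It therefore remains to exhibit a natural $(MU_*, MU_* MU)$-comodule structure on $(MU_A)_*(X)$ refining its $MU_*$-module structure. I would produce this by applying the functor $(X \wedge MU_A) \wedge_{MU} (-)$ on $MU$-modules to the right unit $MU \to MU \wedge MU$, which exhibits $MU$ as a comodule over itself; the coaction is the resulting composite
\[ (MU_A)_*(X) \;\longrightarrow\; \pi_*^A\!\big((X \wedge MU_A) \wedge_{MU} (MU \wedge MU)\big) \;\cong\; (MU_A)_*(X) \otimes_{MU_*} MU_* MU, \]
the last isomorphism being Künneth, valid because $MU_* MU$ is flat (indeed free) over $MU_*$. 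Counitality, coassociativity, and naturality in $X$ follow from functoriality and the corresponding properties of $MU$ as a self-comodule. Together with the previous paragraph this kills the higher $\mathrm{Tor}$-groups, so $\eta_X$ is an isomorphism.

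The step I expect to be the main obstacle is the foundational input in the second paragraph. Proposition \ref{MayCpxOri} invokes the Künneth spectral sequence of \cite{May98} only for the module $MU_A$ itself --- and there it collapses only because $(MU_A)_*$ happens to be flat over $MU_*$ --- so one must first develop enough of the homotopy theory of $MU$-modules internal to $A$-spectra (for instance via global or $\infty$-categorical foundations for $\MU$-modules) to produce such a spectral sequence, multiplicatively and functorially, for every module $X \wedge MU_A$ with $X$ a compact $A$-spectrum, and to identify its edge map with $\eta_X$. Everything downstream of that is bookkeeping parallel to the nonequivariant case: in particular the argument should subsume the ad-hoc verifications that $MU_A \simeq MU_A \wedge_{MU} MU$ and $MU_A \wedge_{MU} BP$ are $A$-Landweber exact, and it would make the phantom-map theorem announced in Section 4 apply to every $MU_A \wedge_{MU} E$ with $E$ Landweber exact.
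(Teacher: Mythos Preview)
The statement you are trying to prove is labelled \emph{Conjecture} in the paper, and the paper offers no proof of it. The only cases the author actually verifies are $E=MU$ (tautological) and $E=BP$ (Theorem~\ref{BP_A_Identification}, where the argument uses that $BP_*$ is a \emph{summand} of $MU_*$, hence flat, rather than anything like your Tor-vanishing argument). The paper explicitly flags an equivariant Landweber exact functor theorem as future work and records this conjecture as open; so there is nothing on the paper's side to compare your attempt to.

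As for the proposal itself: what you have written is a plausible strategy, and you have correctly isolated the genuine obstacle. The third paragraph --- producing an $(MU_*,MU_*MU)$-comodule structure on $(MU_A)_*(X)$ and then invoking the Landweber filtration/regularity package to kill $\mathrm{Tor}^{MU_*}_{>0}$ --- is the standard nonequivariant mechanism and should go through once the foundations are in place; your coaction map via $(X\wedge MU_A)\wedge_{MU}(MU\wedge MU)$ is the right shape, and the K\"unneth collapse there is unproblematic because $MU_*MU$ is free over $MU_*$. The real content, as you say, is the second paragraph: one needs a K\"unneth spectral sequence for $(X\wedge MU_A)\wedge_{MU} E$, natural in the compact $A$-spectrum $X$, whose $E_2$-page is $\mathrm{Tor}^{MU_*}$ (over \emph{nonequivariant} $MU_*$, not over a Mackey functor) and whose edge map is $\eta_X$. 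May's spectral sequence in \cite{May98} is set up only for the module $MU_A$ itself, and the paper does not develop the homotopy theory of $i^*MU$-modules in $A$-spectra needed to extend it. Until that foundational gap is closed your argument is a program, not a proof --- which is exactly why the author left the statement as a conjecture.
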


Note that this conjecture is true when $E = BP$ by Theorem \ref{BP_A_Identification}. Conversely, by Corollary \ref{MayConjTori}, $A$-Landweber exactness of $KU_A$ does not prove the conjecture for $E = KU$.

As observed in \cite[Example 5.16]{Hau22}, for any global group law $X$ and ring map $X(1) \rightarrow k$, we obtain a new global group law by base change. Specifically, replace $X(A)$ with $X(A) \otimes_{X(1)} k$. May's construction places us exactly in this situation.

\begin{proposition}
\label{May_GGL}
In the situation of Proposition \ref{MayCpxOri}, the homotopy groups $MU_A^* \otimes_{MU^*} E^*$ assemble to a global group law.
\end{proposition}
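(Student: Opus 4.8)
The plan is to exhibit the desired structure as the base-change of the initial global group law $\underline{\pi}_*(\MU)$ along the classifying map $\phi\colon MU^* \to E^*$, and then match up the distinguished element with the complex orientation of Proposition \ref{MayCpxOri}. First I would record the functoriality: regard $MU_A^*$ as an $MU^*$-algebra via the inflation map $MU^* = \pi_*^{\{1\}}(\MU) \to \pi_*^A(\MU) = MU_A^*$ induced by $A \to \{1\}$; since for every homomorphism $A \to B$ the triangle $A \to B \to \{1\}$ commutes, the restriction $MU_B^* \to MU_A^*$ is a map of $MU^*$-algebras, so $A \mapsto MU_A^* \otimes_{MU^*} E^*$ is a functor $Tori^{op} \to \mathrm{Rings}$. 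For the distinguished element I take $e := e_{MU} \otimes 1 \in MU_{S^1}^* \otimes_{MU^*} E^*$, where $e_{MU} \in MU_{S^1}^*$ is the Euler class exhibiting $\underline{\pi}_*(\MU)$ as a global group law; under the isomorphism $MU_{S^1}^* \otimes_{MU^*} E^* \cong \pi_*^{S^1}(MU_{S^1} \wedge_{MU} E)$ of Proposition \ref{MayCpxOri}, this is the Euler class of the degree-$2$ complex orientation produced there.

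It then remains to verify the exactness axiom. Fix a split surjective character $V\colon \T^n \to \T$ with $K := \ker(V) \cong \T^{n-1}$. Since $\underline{\pi}_*(\MU)$ is a global group law, there is a short exact sequence of $MU^*$-modules
\[ 0 \to MU_{\T^n}^* \xrightarrow{\;V^*(e_{MU})\cdot(-)\;} MU_{\T^n}^* \xrightarrow{\;res_K^{\T^n}\;} MU_K^* \to 0, \]
whose maps are $MU^*$-linear (multiplication by a ring element, respectively a restriction of $MU^*$-algebras). Applying $-\otimes_{MU^*}E^*$ and noting $V^*(e_{MU})\otimes 1 = V^*(e)$, right exactness of the tensor product immediately yields exactness at the middle and right terms of
\[ 0 \to MU_{\T^n}^* \otimes_{MU^*} E^* \xrightarrow{\;V^*(e)\cdot(-)\;} MU_{\T^n}^* \otimes_{MU^*} E^* \xrightarrow{\;res_K^{\T^n}\;} MU_K^* \otimes_{MU^*} E^* \to 0. \]
Exactness at the left term — injectivity of multiplication by $V^*(e)$ — follows from the long exact $\mathrm{Tor}$ sequence once $\mathrm{Tor}_1^{MU^*}(MU_K^*, E^*) = 0$. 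This vanishing is the one real input: it holds because $MU_K^* = MU_{\T^{n-1}}^*$ is flat (in fact free) over $MU^*$, by Comeza\~{n}a~\cite{Com96}, and this is the same fact that forces the K\"{u}nneth spectral sequence of Proposition \ref{MayCpxOri} to collapse onto $MU_A^* \otimes_{MU^*} E^*$.

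Thus $A \mapsto MU_A^* \otimes_{MU^*} E^*$ together with $e = e_{MU}\otimes 1$ is a global group law; this is precisely the base-change construction of \cite[Example 5.16]{Hau22} applied to $\underline{\pi}_*(\MU)$ and $\phi$, and since its Euler class is the image of the Euler class of $MU_A$, it coincides with the global group law carried by the complex orientation of Proposition \ref{MayCpxOri}. I expect the only place requiring care to be the left exactness of the tensored sequence, which rests entirely on the flatness of $MU_{\T^{n-1}}^*$ over $MU^*$; functoriality, the identification of the distinguished element, and the two right-hand exactness statements are all formal.
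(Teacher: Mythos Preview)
Your proof is correct, but the paper handles the left exactness more cheaply. You invoke Comeza\~{n}a's theorem that $MU_{\T^{n-1}}^*$ is free over $MU^*$ to kill the relevant $\mathrm{Tor}_1$. The paper instead notes that the original short exact sequence is already split in $MU^*$-modules: since $V$ is a \emph{split} surjective character, there is a projection $\T^n \cong \ker(V)\times\T \to \ker(V)$, and the induced inflation $MU_{\ker(V)}^* \to MU_{\T^n}^*$ is an $MU^*$-algebra section of the restriction. A split exact sequence stays exact after any tensor, so no flatness hypothesis on either $MU_K^*$ or $E^*$ is needed. Your route works and has the virtue of making explicit the connection to the K\"{u}nneth collapse in Proposition~\ref{MayCpxOri}, but it imports a structural theorem about $MU_A^*$ that the argument does not actually require; the paper's splitting observation uses only the axioms of a global group law and the hypothesis on $V$ already present in the definition.
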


\begin{proof}
Clearly $MU_{(-)}^* \otimes_{MU^*} E^*$ receives a map from $\mathbf{L} \cong \underline{\pi}_*(\mathbf{MU})$. It remains to check that we have the desired short exact sequences. In particular, we must show that the short exact sequences \[ 0 \rightarrow MU_{\mathbb{T}^n}^* \xrightarrow{e_V} MU_{\mathbb{T}^n}^* \xrightarrow{res_{ker(V)}^{\mathbb{T}^n}} MU_{ker(V)}^* \rightarrow 0 \] remain exact upon tensoring over $MU^*$ with $E^*$. This is clear, however, since $res_{\mathbb{T}^n}^{ker(V)}$ provides an $MU^*$-module splitting.
\end{proof}

\begin{conjecture} 
\label{Global_Existence}
There is a global version of May's functor $E \mapsto \mathbf{MU} \wedge_{MU} E$ which takes Landweber exact spectra to weakly globally Landweber exact spectra.
\end{conjecture}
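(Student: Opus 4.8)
The plan is to split Conjecture \ref{Global_Existence} into two parts: \textbf{(A)} the construction of a functor $E \mapsto \MU \wedge_{MU} E$ landing in global spectra whose value at each compact abelian Lie group $A$ is May's construction $MU_A \wedge_{MU} E$ of \cite[Theorem 1.1]{May98}, and \textbf{(B)} the verification that this global spectrum is weakly globally Landweber exact whenever $E$ is Landweber exact. Part (B) is exactly the conjecture preceding \ref{Global_Existence}, asserted now for all Landweber exact $E$ rather than only for $E = BP$ (cf.\ Theorem \ref{BP_A_Identification}), and it is the main obstacle.

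For (A) I would work inside Schwede's category of orthogonal global spectra. Let $MU^{\mathrm{triv}}$ denote the global $E_\infty$-ring obtained by equipping the orthogonal ring spectrum $MU$ with trivial $A$-actions for all $A$; the Thom-class construction of \cite[Section 6.1]{Sch18} supplies a map of global $E_\infty$-rings $\iota \colon MU^{\mathrm{triv}} \to \MU$ (at level $A$ this is the inclusion $\mathrm{infl}^A_e MU \to MU_A$ of the inflated nonequivariant cobordism ring into tom Dieck's homotopical cobordism). For an $MU$-module $E$ one then sets
\[
\MU \wedge_{MU} E \ :=\ \MU \wedge_{MU^{\mathrm{triv}}} E^{\mathrm{triv}},
\]
the relative smash product of global $\MU$-modules, $E^{\mathrm{triv}}$ being $E$ with trivial actions. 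The work in this step is to identify its value at $A$, namely $MU_A \wedge_{\mathrm{infl}^A_e MU} \mathrm{infl}^A_e E$, with May's construction; this should follow because $\mathrm{res}_A$ is symmetric monoidal and carries $(-)^{\mathrm{triv}}$ to inflation, and because May constructs $MU_A \wedge_{MU}(-)$ precisely as such a derived relative smash product (the K\"unneth spectral sequence of \cite{May98} then computing its homotopy). Granting this, Propositions \ref{MayCpxOri} and \ref{May_GGL} globalize verbatim: the ring map $\MU = \MU \wedge_{MU} MU \to \MU \wedge_{MU} E$ furnishes a global complex orientation whose associated global group law is the base change $MU^*_{(-)} \otimes_{MU^*} E^*$ of that of $\MU$ along $MU^* \to E^*$, and evenness of $MU_A^*$ and of $E^*$ ensures that the $A$-homotopy groups for general $A$ are left Kan extended from those at tori, consistently with the global-group-law structure.

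For (B) it suffices, by the definition of weak global Landweber exactness, to show that for each compact abelian Lie group $A$ the functor
\[
MU_A^*(-) \otimes_{MU_A^*} \bigl( MU_A^* \otimes_{MU^*} E^* \bigr) \ \cong\ MU_A^*(-) \otimes_{MU^*} E^*
\]
is a cohomology theory on compact $A$-spaces. The only axiom at issue is exactness, so one must show that $- \otimes_{MU^*} E^*$ preserves the long exact sequences of $MU_A^*$-cohomology. Nonequivariantly this is Landweber's theorem, whose proof uses that $MU_*(-)$ takes values in $MU_*MU$-comodules and that a Landweber exact $MU_*$-module is flat relative to that abelian category; the essential difficulty is that for a general compact $A$-space $X$ the group $MU_A^*(X)$ is \emph{not} a comodule over the nonequivariant Hopf algebroid. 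I see two plausible routes. The first is to prove an equivariant Landweber exact functor theorem outright, replaying the regular-sequence filtration argument inside the homological Hopf algebroid $\bigl( (MU_A)_*, (MU_A)_* MU_A \bigr)$ and invoking the Hausmann--Meier classification of its invariant prime ideals \cite{HM23}; this is the program announced in the introduction, and it is carried out for $E = BP$ in the construction leading to Theorem \ref{BP_A_Identification}. The second, restricted first to tori, is a d\'evissage: with nonequivariant Landweber exactness of $E$ as the base case, induct on $n$ using the global group law short exact sequence
\[
0 \longrightarrow MU_{\T^n}^* \xrightarrow{\ e_V\ } MU_{\T^n}^* \xrightarrow{\ \mathrm{res}\ } MU_{\T^{n-1}}^* \longrightarrow 0
\]
together with an isotropy-separation decomposition of a compact $\T^n$-space, reducing exactness of $MU_{\T^n}^*(-) \otimes_{MU^*} E^*$ to the $\T^{n-1}$-case and to exactness of $-\otimes_{MU^*} E^*$ on the $MU^*$-module sequences appearing in the global group law; passing from tori to general $A$ would then still require the first route or an additional argument handling finite isotropy. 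Combining (A) with (B) would prove the conjecture, and I expect route one of (B) to be the real difficulty, since it is precisely the equivariant Landweber exact functor theorem that this paper does not establish.
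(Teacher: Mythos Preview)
The paper states this as an open \emph{conjecture} and gives no proof; there is nothing in the paper to compare your proposal against. Your write-up is honest about this: you label it a strategy, you reduce part (B) to the conjecture immediately preceding \ref{Global_Existence} (that $MU_A \wedge_{MU} E$ is $A$-Landweber exact whenever $E$ is Landweber exact), and you identify route one of (B)---an equivariant Landweber exact functor theorem via the Hausmann--Meier classification of invariant primes---as ``the real difficulty.'' That is exactly right: it is the theorem the paper explicitly declines to prove (see the abstract and the closing sentence of the acknowledgments). So your proposal does not close the gap; it correctly locates it, and as a proof it is incomplete for precisely the reason you name.

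Two smaller technical comments. For (A), in Schwede's model the underlying orthogonal spectrum of $\MU$ \emph{is} $MU$, so the object you call $MU^{\mathrm{triv}}$ and the map $\iota$ need a more careful formulation: you want a global ring spectrum whose restriction to each $A$ is the inflation $\mathrm{infl}^A_e MU$ rather than $MU_A$, and at the point-set level of orthogonal spectra this is not simply ``$MU$ with trivial action.'' For the d\'evissage route in (B), the global-group-law short exact sequence concerns the coefficient rings $MU^*_{\T^n}$, not $MU^*_{\T^n}(X)$ for a general compact $\T^n$-space $X$; the analogue with $X$ inserted is the long exact sequence of the cofiber $S(V)_+ \to S^0 \to S^V$ smashed with $X$, and controlling its exactness after $-\otimes_{MU^*} E^*$ in terms of the $\T^{n-1}$-case requires more than the coefficient-level statement you invoke.
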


\begin{conjecture}
If $E$ is a Landweber exact spectrum, then for any subgroup $B$ of a compact abelian Lie group $A$, the cohomology theory $X \mapsto E^*( \Phi^B(X)_{h(A/B)})$ is $A$-Landweber exact ($\Phi^B$ denotes geometric fixed points and ${}_{h(A/B)}$ denotes homotopy orbits). Cohomology theories of this form include the equivariant Lubin-Tate theories of Strickland \cite[Section 11]{Str11}.
\end{conjecture}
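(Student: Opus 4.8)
The plan is to realize the proposed cohomology theory by a genuine $A$-spectrum $F$, show that $F$ is complex oriented with orientation class in degree $2$, and then reduce the $A$-Landweber exactness of $F$ to a base-change formula for $MU_A$ that is essentially the $MU_A$-case of an equivariant Landweber exact functor theorem. I begin by identifying the representing spectrum. The functor $\Phi^B(-)_{h(A/B)}\colon Sp^A \to Sp$ is a composite of colimit-preserving functors, so it has a right adjoint $R\colon Sp \to Sp^A$; since $\Phi^B$ is strong symmetric monoidal and $(-)_{h(A/B)}$ is oplax symmetric monoidal (being left adjoint to the Borel-completed inflation functor, which is lax symmetric monoidal as $E(A/B)_+$ is a coalgebra via the diagonal), the composite is oplax symmetric monoidal and $R$ is lax symmetric monoidal. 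Hence $F := R(E)$ is a ring $A$-spectrum, and for every compact $A$-spectrum $Y$ one has $F^*(Y) \cong E^*(\Phi^B(Y)_{h(A/B)})$. Taking $Y = \Sigma^\infty_+(A/C)$, and using $\Phi^B(\Sigma^\infty_+(A/C)) = \Sigma^\infty_+((A/C)^B)$ together with the fact that $(A/C)^B$ is empty when $B \not\subseteq C$ and equals $A/C \cong (A/B)/(C/B)$ (with its residual $A/B$-action) when $B \subseteq C$, I compute $\pi_*^C(F) = 0$ for $B \not\subseteq C$ and $\pi_*^C(F) \cong E^{-*}(B(C/B)_+)$ for $B \subseteq C$. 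Since $E$ is even (automatic for spectra produced by the Landweber exact functor theorem, and in particular for Lubin--Tate theories), each $E^{-*}(B(C/B)_+)$ is concentrated in even degrees, as it must be for an $A$-Landweber exact spectrum.

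Next I would exhibit a complex orientation of $F$ from the orientation $MU \to E$. Given a complex $A$-vector bundle $p\colon V \to X$ of rank $k$, the functor $\Phi^B(-)_{h(A/B)}$ carries the Thom space $M(p)$ to the Thom space $M(\bar p)$ of the complex vector bundle $\bar p := E(A/B)\times_{A/B} V^B$ of rank $k'' \le k$ over $(X^B)_{h(A/B)}$, where $V^B \subseteq V$ is the subbundle of $B$-fixed vectors (using that geometric fixed points are compatible with Thom spaces and with homotopy orbits). Combining the $E$-theory Thom class of $\bar p$ with the $E$-theory Euler class of a rank-$(k-k'')$ complex bundle accounting for the complementary directions gives a class $\tau(p) \in E^{2k}(M(\bar p)) = F^{2k}(M(p))$, and naturality and multiplicativity of Thom and Euler classes, together with $(V\oplus W)^B = V^B \oplus W^B$, should show that the $\tau(p)$ satisfy the axioms of \cite{Oko82} and assemble to a complex orientation of the $A$-spectrum $F$ in the sense of \cite{CGK00} with orientation class in degree $2$; the $RO(A)$-graded bookkeeping here takes some care, but it is not the main obstacle. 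By \cite[Theorem 1.2]{CGK02} this yields a ring map $MU_A \to F$, and one checks, using Hausmann's identification $MU_A^* \cong L_A$ \cite{Hau22}, that the resulting classifying map $MU_A^* \to F^* = E^*(B(A/B)_+)$ is the base change of $MU^* \to E^*$ along the canonical ring map $MU_A^* \to MU^*(B(A/B)_+)$.

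To conclude that $F$ is $A$-Landweber exact it suffices, by Proposition \ref{Bridges}, to establish the natural isomorphism $F^*(Y) \cong MU_A^*(Y)\otimes_{MU_A^*}F^*$ on compact $A$-spaces (or spectra), for then the base-change functor, being $E^*(\Phi^B(-)_{h(A/B)})$, is manifestly a cohomology theory. The natural attack is a base-change chain: for finite $Y$, classical (nonequivariant) Landweber exactness of $E$ gives $E^*(\Phi^B(Y)_{h(A/B)}) \cong MU^*(\Phi^B(Y)_{h(A/B)})\otimes_{MU^*}E^*$, so it remains to identify $MU^*(\Phi^B(Y)_{h(A/B)})$ with $MU_A^*(Y)\otimes_{MU_A^*}MU^*(B(A/B)_+)$; granting this, base change along $MU^* \to E^*$ finishes the argument. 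This last identification is a statement purely about how $MU_A$ interacts with geometric fixed points and Borel completion; it holds tautologically at $Y = S^0$ and should in principle follow from the structure of $\Phi^B(MU_A)$ (\cite{Sin01}, \cite{Hau22}) together with the geometric-fixed-point compatibility of the complex orientation, but a proof valid for all finite $Y$ amounts to the $MU_A$-case of an equivariant Landweber exact functor theorem — exactly the input this paper does not establish. This is the main obstacle, and the reason the statement is recorded as a conjecture rather than a theorem.

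Finally, for $E$ a Lubin--Tate theory, I would identify $F$ with Strickland's equivariant Lubin--Tate spectrum of \cite[Section 11]{Str11} by comparing $A$-equivariant formal group laws: Strickland's construction is governed by the deformation theory of the formal group together with level/quotient data indexed by subgroups, and one checks that the classifying map $MU_A^* \to F^*$ computed above — the base change of $MU^* \to E^*$ along $MU_A^* \to MU^*(B(A/B)_+)$ — agrees with Strickland's, using functoriality in $A$ and $MU_A^* \cong L_A$. The $A$-Landweber exactness established above then upgrades this comparison of formal group laws to an equivalence of $A$-spectra via Theorem \ref{Uniqueness}.
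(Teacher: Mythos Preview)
The statement is recorded in the paper as a \emph{conjecture}; the paper offers no proof. Your proposal is not a proof either, and you say so yourself: the base-change identification
\[
MU^*\bigl(\Phi^B(Y)_{h(A/B)}\bigr)\;\cong\;MU_A^*(Y)\otimes_{MU_A^*}MU^*\bigl(B(A/B)_+\bigr)
\]
for all finite $Y$ is precisely an instance of the equivariant Landweber exact functor theorem that the paper explicitly declines to establish (see the abstract and the closing acknowledgments). So there is nothing in the paper to compare against, and your outline correctly isolates the missing ingredient.

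Two technical remarks on the outline itself. First, your appeal to nonequivariant Landweber exactness to obtain $E^*(\Phi^B(Y)_{h(A/B)}) \cong MU^*(\Phi^B(Y)_{h(A/B)})\otimes_{MU^*}E^*$ is a \emph{cohomological} statement on a spectrum that is not finite (the homotopy orbit involves $E(A/B)_+$), and the cohomological Landweber isomorphism is only guaranteed on finite spectra; you would need to work homologically throughout and then dualize, or control a $\lim^1$ term via a skeletal filtration of $B(A/B)$. Second, your final paragraph identifying $F$ with Strickland's equivariant Lubin--Tate spectrum invokes Theorem~\ref{Uniqueness}, whose hypothesis is that one of the two spectra be $A$-Landweber exact; this is exactly the open point, so that identification is conditional on the same gap and does not independently establish the last sentence of the conjecture.
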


It is a curious artifact of our methods that Mackey functors and Tambara functors do not feature prominently in the definition of $A$-Landweber exactness. Roughly speaking, this is in contrast to the global perspective on equivariant formal group laws, where the restriction maps play a pivotal role. Presumably there is some formulation of $A$-Landweber exactness for which the cohomology theories are viewed as taking values in Mackey functors rather than abelian groups, and it would be interesting to see if there are any $A$-spectra besides $MU_A$ which satisfy the resulting condition. Furthermore, such a formulation may lead straightforwardly to a global perspective on Landweber exactness.

\subsection{Properties of $KU_A$}

It is our aim in this subsection to show that the following conjecture of May is false for all nontrivial compact Lie groups $G$. We will start by examining the conjecture when $G = A$, a compact abelian Lie group. Next, an easy argument extends the result to all compact Lie groups containing a nontrivial compact abelian Lie group, ie all nontrivial compact Lie groups.

\begin{conjecture}[\cite{May98}]
\label{MayConj}
$MU_G \wedge_{MU} KU$ is equivalent to $KU_G$ in the category of $MU_G$-module spectra for $G$ a compact Lie group.
\end{conjecture}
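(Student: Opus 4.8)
The plan is to disprove the conjecture by computing both sides at a point and finding a discrepancy already in $\pi^G_*$. Since $KU_G$ is $G$-Landweber exact (Corollary, assuming the abelian case) with classifying map the Conner--Floyd map, we know $KU_G^* \cong MU_G^* \otimes_{MU^*} KU^*$; on the other hand, by Proposition \ref{MayCpxOri} and Proposition \ref{May_GGL}, $MU_G \wedge_{MU} KU$ has homotopy $MU_G^* \otimes_{MU^*} KU^*$ as well. So if May's conjecture only asserted an abstract equivalence of homotopy rings, it would be \emph{true} for $G = A$ abelian. The point must therefore be that for general compact Lie $G$ these two sides genuinely differ, and the cleanest obstruction is that the left-hand side is built by base change along the \emph{nonequivariant} $MU^*$, which collapses the equivariant formal-group-law structure, whereas $KU_G^*$ retains it. Concretely, I would take $G = \T$ and compare $\pi^\T_*(MU_\T \wedge_{MU} KU) \cong MU_\T^* \otimes_{MU^*} KU^*$ with $KU_\T^* = R(\T)[\beta^{\pm 1}] = \Z[t^{\pm1}][\beta^{\pm1}]$. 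The Euler class $e \in KU_\T^0$ of the tautological representation is $1 - t$, which is a non-zero-divisor but \emph{not} a unit and not nilpotent; in $MU_\T^* \otimes_{MU^*} KU^*$ the Euler class is the image of the $MU_\T^*$ Euler class, and using Hausmann--Meier's description of the invariant primes / the structure of $MU_\T^*$ one checks that the natural map $MU_\T^* \otimes_{MU^*} KU^* \to KU_\T^*$ (which is exactly the arrow of Proposition \ref{MayCpxOri}, i.e.\ the comparison map whose target we understand) fails to be an isomorphism.

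The key steps, in order: (1) Record that by Proposition \ref{MayCpxOri} the $MU_G$-module $MU_G \wedge_{MU} KU$ is complex oriented with classifying map the base change $MU_G^* \to MU_G^* \otimes_{MU^*} KU^*$, and that $KU_G$ is complex oriented with classifying map the Conner--Floyd map; so an equivalence of $MU_G$-modules as in the conjecture would force an isomorphism of $MU_G^*$-algebras $MU_G^* \otimes_{MU^*} KU^* \cong KU_G^*$ compatible with the orientations. (2) Reduce to $G$ abelian, indeed to $G = \T$: if $G$ contains a nontrivial torus or finite cyclic subgroup $A$, restrict along $MU_G \to MU_A$ (restriction is monoidal and sends $MU_G \wedge_{MU} KU$ to $MU_A \wedge_{MU} KU$ and $KU_G$ to $KU_A$), so a counterexample for some nontrivial abelian $A$ propagates to all nontrivial compact Lie $G$. (3) For $G = \T$, compute $MU_\T^* \otimes_{MU^*} KU^*$ explicitly enough to see it differs from $KU_\T^* = \Z[t^{\pm1}, \beta^{\pm 1}]$: the essential input is that $MU_\T^*$ is, after inverting nothing, a non-Noetherian ring whose Euler class generates an ideal with infinitely many associated primes / whose quotient $MU_\T^*/(e) \cong MU^*$ — and base-changing to $KU^*$ along $MU^* \to KU^*$ does \emph{not} reproduce $KU_\T^*/(1-t) \cong KU^*$ with the right module structure, because $KU_\T^*$ is free of rank one over $R(\T) \otimes KU^*$ while the tensor product is not finitely generated over $R(\T) \otimes KU^*$. (4) Conclude the comparison map is not an isomorphism, hence no $MU_\T$-module equivalence exists, hence the conjecture fails for $\T$ and therefore for every nontrivial compact Lie group; this is the content of the corollaries referenced as \ref{MayConjTori} and the extension to general $G$.

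I expect the main obstacle to be step (3): pinning down precisely \emph{why} $MU_\T^* \otimes_{MU^*} KU^*$ is not $KU_\T^*$, since both become $KU^*$ after killing the Euler class and both carry the same formal group law. The resolution should come from Hausmann--Meier's classification of invariant primes of $(MU_\T^*, MU_\T^* MU_\T)$ \cite{HM23}, together with Corollary 4.10 of \cite{HM23} on infinite divisibility by Euler classes: the ring $MU_\T^*$ has ``more'' structure (extra non-nilpotent non-units, an infinite filtration by powers of $e$) than survives base change along the nonequivariant map $MU^* \to KU^*$, whereas $KU_\T^* = R(\T)[\beta^{\pm1}]$ is comparatively rigid. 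A clean way to make this quantitative is to compare the $e$-adic behavior: in $KU_\T^*$ the element $e = 1-t$ is a regular element with $\bigcap_n (e^n) = 0$ and $KU_\T^*$ is $e$-adically separated with associated graded a polynomial ring in one variable over $KU^*$, and one shows the same invariant applied to $MU_\T^* \otimes_{MU^*} KU^*$ gives a different associated graded (the equivariant Euler classes of the other characters $t^k$ of $\T$ are not expressible in the tensor product the way they are in $R(\T)$), yielding the contradiction.
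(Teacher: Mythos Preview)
Your overall architecture --- reduce to a nontrivial abelian subgroup, then show the comparison map $MU_A^* \otimes_{MU^*} KU^* \to KU_A^*$ is not an isomorphism --- matches the paper's. But two things go wrong.

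First, your opening paragraph misreads the definition of $A$-Landweber exactness. You write that ``since $KU_G$ is $G$-Landweber exact \ldots\ we know $KU_G^* \cong MU_G^* \otimes_{MU^*} KU^*$''. That is not what $A$-Landweber exactness says: the isomorphism it gives is $KU_A^*(X) \cong MU_A^*(X) \otimes_{MU_A^*} KU_A^*$, which at $X=\ast$ is a tautology. The statement you wrote down is precisely the one the paper \emph{disproves}. So your remark that ``if May's conjecture only asserted an abstract equivalence of homotopy rings, it would be true for $G=A$ abelian'' is backwards: already for $A=\T$ the homotopy rings differ, and that is the entire content of the argument.

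Second, and more seriously, your proposed mechanism in step~(3) does not separate the two rings. Both $MU_\T^* \otimes_{MU^*} KU^*$ and $KU_\T^*$ have the \emph{same} $e$-adic associated graded: since $e$ is regular in $MU_\T^*$ with quotient $MU^*$ (the global-group-law short exact sequence), base change gives $e$ regular in $MU_\T^* \otimes_{MU^*} KU^*$ with quotient $KU^*$, so $\mathrm{gr}_e \cong KU^*[e]$ on both sides. Likewise your claim that ``the Euler classes of the other characters $t^k$ are not expressible in the tensor product the way they are in $R(\T)$'' does not pin anything down, and your finite-generation comparison over $R(\T)\otimes KU^*$ presupposes a map that is not canonically there. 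What the paper does instead is \emph{invert} all nontrivial Euler classes and invoke \cite[Proposition~2.11]{Hau22}: $MU_A^*[S^{-1}] \cong MU^*[e_V^{\pm1},\gamma_i^V]$, whence $(MU_A^*\otimes_{MU^*}KU^*)[S^{-1}] \cong \Z[\beta^{\pm1}][e_V^{\pm1},\gamma_i^V]$. On the other hand $KU_A^*[S^{-1}]$ is already hit by the subring $\Z[\beta^{\pm1}][e_V^{\pm1}]$, so the extra polynomial generators $\gamma_i^V$ force the comparison map to have nontrivial kernel whenever $A\neq\{1\}$. This is the concrete obstruction your step~(3) was groping for; the $e$-adic route cannot see it because the $\gamma_i^V$ live precisely in the part that the filtration-by-a-single-Euler-class collapses.
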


By the Proposition \ref{MayCpxOri}, $MU_A \wedge_{MU} KU$ is $A$-equviariantly complex oriented with homotopy groups $MU_A^* \otimes_{MU^*} KU^*$. Regarding $KU$ as an $MU$-module spectrum by nonequivariant Landweber exactness, we see from \ref{MayCpxOri} that the $A$-equvariant formal group law of $MU_A \wedge_{MU} KU$ is classified by \[ MU_A^* \cong MU_A^* \otimes_{MU^*} MU^* \xrightarrow{Id \otimes \rho} MU_A^* \otimes_{MU^*} KU^* \] where $\rho : MU^* \rightarrow KU^*$ classifies the nonequivariant formal group law over $KU^*$.

Writing $KU^* \cong \Z[\beta^{\pm 1}]$, the formal group law associated to $KU$ is $x+y+\beta xy$. Thus $\beta$ is in the image of the formal group law classification map $MU^* \rightarrow KU^*$. For the rest of the subsection, fix a preimage $\tilde{\beta} \in MU^*$ of $\beta$.

\begin{lemma}
\label{KU_Surj}
The map $MU_A^*[\tilde{\beta}^{-1}] \rightarrow KU_A^*$ induced by the $A$-equivariant formal group law classification map $MU_A^* \rightarrow KU_A^*$ is surjective.
\end{lemma}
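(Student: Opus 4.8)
\emph{Proof proposal.} By Atiyah--Segal the target is $KU_A^* \cong R(A)[\beta^{\pm 1}]$, with $\beta \in KU_A^{-2}$ the Bott class and $KU_A^{\mathrm{odd}} = 0$. The plan is to show that the image of $MU_A^*[\tilde\beta^{-1}] \to KU_A^*$ --- a graded subring --- contains $\beta^{\pm 1}$ together with the class $[L]$ of every irreducible ($=$ one-dimensional) $A$-representation $L$, since these generate $KU_A^*$ as a ring.

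First I would check that $\beta^{\pm 1}$ lies in the image. By construction $\tilde\beta \in MU^{-2}$ maps to $\beta \in KU^{-2}$, and the equivariant Conner--Floyd maps commute with the maps induced by group homomorphisms (the Corollary at the end of Subsection 2.2), in particular with the inflation maps $MU^* \to MU_A^*$ and $KU^* \to KU_A^*$ attached to $A \to \{1\}$. Since inflation carries the nonequivariant Bott class to the equivariant one, the image of $\tilde\beta$ under $MU_A^* \to KU_A^*$ is $\beta$; hence $\beta$ lies in the image of $MU_A^* \to KU_A^*$, and $\beta^{-1}$ lies in the image of $MU_A^*[\tilde\beta^{-1}] \to KU_A^*$. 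Since $A$ is compact abelian, $R(A)$ is spanned as an abelian group by the classes $[L]$, so it remains to hit each $[L]$.

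For this I would use that the formal group law classification map preserves Euler classes --- exactly the fact invoked in the proof of the Proposition identifying the Conner--Floyd map with the classification map, since the Euler class of a representation is a Thom class of a bundle over a point and so is built from the orientation alone. Thus $e^{KU_A}(L) \in KU_A^2$ is the image of $e^{MU_A}(L) \in MU_A^2$ for every character $L$. A standard computation in equivariant $K$-theory identifies $\beta\, e^{KU_A}(L) \in KU_A^0 = R(A)$ with $[L] - 1$, up to an orientation- and duality-dependent sign (in the other convention it is $1 - [L^{-1}]$); either way $[L]$ or $[L^{-1}]$ equals $1 \pm \beta\, e^{KU_A}(L)$ in $KU_A^0$ and so is the image of $1 \pm \tilde\beta\, e^{MU_A}(L) \in MU_A^0$. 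Because $L \mapsto L^{-1}$ is a bijection of the character group, every $[L]$ is in the image. As the image is a subring containing $\beta^{\pm 1}$ and all the $[L]$, it contains $R(A)[\beta^{\pm 1}] = KU_A^*$, which is the claim.

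The step demanding the most care is the $K$-theory Euler class computation and the reconciliation of sign and duality conventions among the complex orientation of $\KU$, the Bott class $\beta$, and Okonek's Euler classes. The precise form of the answer is, however, inessential: all that enters is that $[L]$ (or $[L^{-1}]$) is a $\Z$-coefficient polynomial in $\beta$ and $e^{KU_A}(L)$, which follows from the multiplicative form $x + y + \beta xy$ of the formal group law.
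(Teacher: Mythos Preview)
Your argument is correct and follows essentially the same route as the paper: identify $KU_A^* \cong R(A)[\beta^{\pm 1}]$, hit $\beta^{\pm 1}$ via the lift $\tilde\beta$, and hit the characters $[L]$ via the Euler classes (which the classification map preserves). The paper compresses the Euler-class step into the sentence ``the nontrivial elements of $A^*$ correspond to the nontrivial Euler classes under the isomorphism $KU^*[A^*] \cong KU_A^*$,'' whereas you spell out the relation $[L] = 1 \pm \beta\, e^{KU_A}(L)$ explicitly; this is the same content with more care about conventions.
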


\begin{proof}
We have $KU_A^* \cong KU^*[A^*]$, the group ring over $KU^*$ on the Pontryagin dual $A^*$ of $A$. The nontrivial elements of $A^*$ correspond to the nontrivial Euler classes under the isomorphism $KU^*[A^*] \cong KU_A^*$, hence each Euler class of $KU_A^*$ is in the image of the $A$-equivariant formal group law classification map. Inverting $\tilde{\beta} \in MU^* \subset MU_A^*$, the result follows from the case $A = \{ 1 \}$, which follows from $KU^* \cong \Z[\beta^{\pm 1}]$.
\end{proof}

\begin{theorem}
For each $A$, there is exactly one map $MU_A^* \otimes_{MU^*} KU^* \rightarrow KU_A^*$ of rings under $MU_A^*$. This map is always surjective, but only injective when $A = \{ 1 \}$.
\end{theorem}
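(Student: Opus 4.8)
The plan is to treat existence and uniqueness first (these are formal), then surjectivity (which is essentially Lemma~\ref{KU_Surj}), and finally to show that injectivity fails for every nontrivial $A$ by a Krull-dimension comparison.

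\textbf{Existence, uniqueness, surjectivity.} A ring map $MU_A^* \otimes_{MU^*} KU^* \to KU_A^*$ under $MU_A^*$ is the same thing as a ring map $KU^* \to KU_A^*$ whose restriction to $MU^*$ agrees with the composite $MU^* \to MU_A^* \to KU_A^*$, the second arrow being the $A$-equivariant formal group law classification map. Since these maps commute with inflation along $A \to \{1\}$ — the content of the corollary of Subsection 2.2 that the equivariant Conner--Floyd maps commute with all restriction maps — that composite factors through $MU^* \to KU^*$, so the map exists; concretely it is the base-change map of Proposition~\ref{MayCpxOri} applied to $E = KU$. For uniqueness, note that $1 \otimes \beta = \tilde\beta \otimes 1$ lies in the image of $MU_A^*$, so $MU_A^* \otimes_{MU^*} KU^*$ is generated as an $MU_A^*$-algebra by the single element $1 \otimes \beta^{-1}$, which any ring map under $MU_A^*$ must send to the unique inverse of the image of $\tilde\beta$. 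Surjectivity is then immediate: the image contains the image of $MU_A^* \to KU_A^*$ and the inverse of the image of $\tilde\beta$, hence the image of $MU_A^*[\tilde\beta^{-1}] \to KU_A^*$, which is all of $KU_A^*$ by Lemma~\ref{KU_Surj}.

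\textbf{Injectivity.} For $A=\{1\}$ the map is the identity of $KU^*$. For $A \neq \{1\}$, since the map is surjective it is injective if and only if it is an isomorphism, so it suffices to show $MU_A^* \otimes_{MU^*} KU^*$ and $KU_A^*$ have different Krull dimensions. On one side, $KU_A^* \cong KU^*[A^*]$ is a finitely generated algebra over the Noetherian ring $\Z[\beta^{\pm 1}]$, hence of finite Krull dimension. On the other side, invert in $MU_A^* \otimes_{MU^*} KU^*$ the Euler classes of all $A$-representations with trivial fixed points; as localization commutes with base change, this localization is $(\Phi^A MU_A)^* \otimes_{MU^*} KU^*$, where $\Phi^A$ denotes geometric fixed points. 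For $A \neq \{1\}$, $\Phi^A MU_A$ is an infinite wedge of suspensions of $MU$, with homotopy ring (after inverting one class and modulo a controlled relation) a polynomial algebra over $MU^*$ on countably infinitely many generators — this is classical; see \cite{Sin01} and the references there for tori, and tom Dieck for finite groups. Base change along $MU^* \to KU^*$ collapses the coefficients to $\Z[\beta^{\pm1}]$ but leaves those infinitely many polynomial generators in place, so $(\Phi^A MU_A)^* \otimes_{MU^*} KU^*$ has infinite Krull dimension; since a localization cannot have larger Krull dimension than the ring, neither can $MU_A^* \otimes_{MU^*} KU^*$ be finite-dimensional, contradicting an isomorphism with $KU_A^*$.

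The step I expect to be the main obstacle is exactly this last one: verifying that tensoring with $KU^*$ over $MU^*$ does not finite-dimensionalize the geometric fixed points — the point being that the tensor product does kill the surplus $MU^*$-generators and does identify the Euler classes $\bar e_V$ through the now multiplicative (polynomial) formal group law, but does not kill the geometric-fixed-point generators. A safer but longer route to the same end is to first reduce to $A = C_p$ or $A = \T$: every nontrivial compact abelian Lie group is either finite and hence contains some $C_p$ of finite index — so the transfer $MU_{C_p}^* \to MU_A^*$ becomes split injective after tensoring with $\Q$ and sits in a commuting square with the maps in question — or infinite and hence admits a circle quotient with a section, so that the inflation $MU_\T^* \to MU_A^*$ is split injective and sits in such a square; in either case non-injectivity downstairs forces it upstairs. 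One then runs the Krull-dimension argument only for $C_p$ and $\T$, where $\Phi^{C_p}MU_{C_p}$ and $\Phi^\T MU_\T$ are completely explicit. The remaining compatibilities — that $res\circ tr = [A:C_p]$ on $MU_{C_p}^*$, that a split surjection of tori gives a split inflation, and that $MU_A^*[e^{-1}]$ computes the relevant geometric fixed points — are routine.
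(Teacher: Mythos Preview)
Your argument is correct and follows essentially the same route as the paper: existence, uniqueness, and surjectivity are handled the same way, and for non-injectivity both you and the paper invert the nontrivial Euler classes and use the identification $MU_A^*[S^{-1}] \cong MU^*[e_V^{\pm 1}, \gamma_i^V]$ (this is \cite[Proposition 2.11]{Hau22}, which is exactly the geometric-fixed-points computation you invoke). The only difference is cosmetic: where you compare Krull dimensions, the paper observes directly that the subring $\Z[\beta^{\pm 1}][e_V^{\pm 1}]$ already surjects onto $KU_A^*[S^{-1}]$, so an isomorphism would force the $\gamma_i^V$ to be redundant. Your worry about the base change is unfounded --- since the $e_V$ and $\gamma_i^V$ are free (Laurent) polynomial generators over $MU^*$, tensoring with $KU^*$ over $MU^*$ simply replaces the coefficients and leaves $\Z[\beta^{\pm 1}][e_V^{\pm 1}, \gamma_i^V]$, which indeed has infinite Krull dimension; your alternative reduction to $C_p$ or $\T$ is therefore unnecessary.
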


\begin{proof}
The commutative diagram with horizontal maps the formal group law classification maps 
\[ \begin{tikzcd}
MU^* \arrow{r} \arrow{d}{res_A^{ \{ 1 \} }} & KU^* \arrow{d}{res_A^{ \{ 1 \} }} \\
MU_A^* \arrow{r} & KU_A^*
\end{tikzcd} \]
specify a ring map $MU_A^* \otimes_{MU^*} KU^* \rightarrow KU_A^*$. This ring map must be surjective since $KU^*$ is clearly in the image, and $KU_A^*$ is generated over $KU^*$ by Euler classes \cite{Joa04} \cite[Section 6.4]{Sch18}, which are in the image of the map from $MU_A^*$.

By direct observation and by Lemma \ref{KU_Surj}, both $MU_A^* \otimes_{MU^*} KU^*$ and $KU_A^*$ are quotients of $MU_A[\tilde{\beta}^{-1}]$. By the universal property of localizations, at most one such map of rings $MU_A^* \otimes_{MU^*} KU^* \rightarrow KU_A^*$ under $MU_A^*$ can exist. If $MU_A^* \otimes_{MU^*} KU^* \rightarrow KU_A^*$ is injective, then it is an isomorphism, and hence the induced map on any localization must be an isomorphism, hence injective. We will show this is not the case unless $A$ is trivial.

We will localize at the nontrivial Euler classes. More precisely, let $S$ denote the multiplicative monoid generated by the Euler classes $e_V$ where $V$ runs through nontrivial characters $A^* - \{ \textrm{triv} \}$. Let \[ \phi : (MU_A^* \otimes_{MU^*} KU^*)[S^{-1}] \rightarrow KU_A^*[S^-1] \] be the induced map.

From \cite[Proposition 2.11]{Hau22} and the isomorphism $L_A \cong MU_A^*$, we have \[ MU_A^*[S^{-1}] \cong MU^*[e_V^{\pm 1}, \gamma_i^V] \] Now \[ (MU_A^* \otimes_{MU^*} KU^*)[S^{-1}] \cong MU^*[e_V^{\pm 1}, \gamma_i^V] \otimes_{MU^*} KU^* \cong \Z[e_V^{\pm 1}, \gamma_i^V][\beta^{\pm 1}] \]
It must be the case that $\phi$ is the identity on $KU^*$, and $\phi$ takes Euler classes to Euler classes. In particular, the restriction of $\phi$ to $\Z[e_V^{\pm 1}][\beta^{\pm 1}]$ must be surjective. If $\phi$ were an isomorphism, then this restriction would also be injective, hence an isomorphism. Therefore $\phi$ can only be injective when \[ \Z[e_V^{\pm 1}, \gamma_i^V][\beta^{\pm 1}] = \Z[e_V^{\pm 1}][\beta^{\pm 1}] \] which only occurs when $A$ is trivial.
\end{proof}

\begin{corollary}
There is exactly one map of global group laws $MU_{(-)}^* \otimes_{MU^*} KU^* \rightarrow KU_{(-)}^*$ which is surjective at each level.
\end{corollary}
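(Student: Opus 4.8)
The plan is to bootstrap from the theorem just proved, which treats each level $A$ separately, by checking two things: that any map of global group laws is automatically a map of rings under $MU^*_{\mathbb{T}^n}$ at each torus level (this gives uniqueness), and that the levelwise maps of that theorem cohere into a natural transformation preserving the Euler class (this gives existence).

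For uniqueness, I would use Hausmann's theorem that $\mathbf{L} \cong \underline{\pi}_*(\MU)$ is initial among global group laws. Both $X := MU^*_{(-)} \otimes_{MU^*} KU^*$ (a global group law by Proposition \ref{May_GGL}) and $Y := KU^*_{(-)}$ receive unique structure maps from $\mathbf{L}$; by initiality applied to the evident candidates, at a torus $\mathbb{T}^n$ the map $\mathbf{L}(\mathbb{T}^n) = MU^*_{\mathbb{T}^n} \to X(\mathbb{T}^n)$ is the inclusion $a \mapsto a \otimes 1$, and $\mathbf{L}(\mathbb{T}^n) \to Y(\mathbb{T}^n)$ is the formal group law classification map $MU^*_{\mathbb{T}^n} \to KU^*_{\mathbb{T}^n}$. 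Given any map of global group laws $\phi : X \to Y$, the composite $\mathbf{L} \to X \xrightarrow{\phi} Y$ is again a map of global group laws, so it equals the structure map of $Y$; hence each component $\phi_{\mathbb{T}^n}$ is a ring map $MU^*_{\mathbb{T}^n} \otimes_{MU^*} KU^* \to KU^*_{\mathbb{T}^n}$ under $MU^*_{\mathbb{T}^n}$. By the preceding theorem there is exactly one such map, so $\phi$ is determined on $Tori^{op}$, and therefore on all of $Ab^{op}$ after left Kan extension (using that $MU^*_A = \mathrm{colim}_{A \to \mathbb{T}^n} MU^*_{\mathbb{T}^n}$, as $MU^*_A$ is even, and that $- \otimes_{MU^*} KU^*$ preserves colimits).

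For existence I would exhibit $\phi$ concretely. For each torus, let $\phi_{\mathbb{T}^n}$ send $a \otimes b \mapsto \mathrm{CF}_{\mathbb{T}^n}(a)\cdot res(b)$, where $\mathrm{CF}_{\mathbb{T}^n} : MU^*_{\mathbb{T}^n} \to KU^*_{\mathbb{T}^n}$ is the Conner--Floyd map (equivalently, by the earlier proposition, the equivariant formal group law classification map) and $res : KU^* \to KU^*_{\mathbb{T}^n}$ is restriction to the trivial group. Naturality in $\mathbb{T}^n$ is a one-line check from the fact that the Conner--Floyd transformations commute with all restriction maps (the Corollary ending Section~2.2) and that $KU^* \to KU^*_{\mathbb{T}^n}$ is itself a restriction. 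It preserves distinguished elements because the Euler class of $X$ at $S^1$ is $e \otimes 1$ by the construction of the global group law structure in Proposition \ref{May_GGL}, and $\phi_{S^1}(e \otimes 1) = \mathrm{CF}_{S^1}(e)$ is the Euler class of $KU^*_{(-)}$ since the formal group law classification map carries Euler classes to Euler classes. Thus $\phi$ is a map of global group laws; its level-$A$ component is the map of the preceding theorem, hence surjective at every level.

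The main obstacle is not a deep one: the content is bookkeeping, and the single point requiring care is that the levelwise maps produced by the preceding theorem — each pinned down by a universal property \emph{at its own level} — actually glue into a natural transformation. This is exactly what globality of the Conner--Floyd transformations provides; without it one would only know that \emph{some} ring map exists at each level, not that the choices are compatible with restriction.
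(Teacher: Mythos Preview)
Your proposal is correct and is exactly the argument the paper leaves implicit: the corollary is stated without proof, and your two-step bookkeeping (initiality of $\mathbf{L}$ forces any global-group-law map to be an $MU_A^*$-algebra map at each level, whence the preceding theorem gives uniqueness; the Conner--Floyd maps commute with restrictions and preserve Euler classes, whence existence) is the intended unpacking. One cosmetic point: your phrase ``restriction to the trivial group'' for the map $KU^* \to KU_{\mathbb{T}^n}^*$ is backwards---it is restriction along $\mathbb{T}^n \to \{1\}$---but the meaning is clear from context.
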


\begin{corollary}
The $A$-equivariant formal group laws determined by $MU_A \wedge_{MU} KU$ and $KU_A$ are not isomorphic, and $MU_{(-)}^* \otimes_{MU^*} KU^*$ and $KU_{(-)}^*$ are not isomorphic global group laws.
\end{corollary}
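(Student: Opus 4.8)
The plan is to read this off the theorem above. Fix a nontrivial compact abelian Lie group $A$ and suppose, toward a contradiction, that the $A$-equivariant formal group laws of $MU_A \wedge_{MU} KU$ and of $KU_A$ are isomorphic. By Proposition~\ref{MayCpxOri}, the formal group law of $MU_A \wedge_{MU} KU$ is classified by the base-change map $MU_A^* \cong MU_A^* \otimes_{MU^*} MU^* \to MU_A^* \otimes_{MU^*} KU^*$; by the proposition proved earlier identifying the Conner--Floyd map $MU_A^* \to KU_A^*$ with the map classifying the $A$-equivariant formal group law over $KU_A$, the formal group law of $KU_A$ is classified by the Conner--Floyd map. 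Hence both arise by base change from the universal equivariant formal group law over $L_A \cong MU_A^*$, and an isomorphism between them---being compatible with these classifying maps---is in particular a ring isomorphism $MU_A^* \otimes_{MU^*} KU^* \cong KU_A^*$ of rings under $MU_A^*$. By the theorem above there is exactly one map of rings under $MU_A^*$ in this direction, and for nontrivial $A$ it fails to be injective; so it is not an isomorphism, a contradiction.

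For the global group law statement, observe that an isomorphism $MU_{(-)}^* \otimes_{MU^*} KU^* \cong KU_{(-)}^*$ of global group laws restricts at each level $A$ to an isomorphism, hence a surjection, of rings. By the corollary above, the unique level-wise surjective map of global group laws in this direction is the canonical one, which by the theorem fails to be injective at every nontrivial level; thus no such isomorphism exists. (Equivalently, this already follows from the level-$A$ statement applied at any single nontrivial $A$.)

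The one point requiring care---and the closest thing to a genuine obstacle---is justifying the step ``isomorphic equivariant formal group laws $\Rightarrow$ isomorphic coefficient rings as rings under $MU_A^*$''. Since all the spectra in play are complex oriented with orientation class in degree $2$, the natural notion of isomorphism is the orientation-preserving one, which by Theorem~\ref{Uniqueness} together with \cite[Theorem 1.2]{CGK02} corresponds to an equivalence of ring $A$-spectra under $MU_A$; for such an isomorphism, compatibility with the classifying maps out of $L_A$ is automatic and the argument above goes through unchanged. If one instead wishes to rule out arbitrary, possibly non-strict, isomorphisms of formal group laws, it suffices to observe that $MU_A^* \otimes_{MU^*} KU^*$ and $KU_A^*$ are not even abstractly isomorphic as graded rings when $A$ is nontrivial---this can be seen by comparing transcendence degrees after inverting the nontrivial Euler classes, using the localized descriptions obtained in the proof of the theorem above.
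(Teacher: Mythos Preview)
Your core argument is correct and matches the paper's intent: the corollary is stated without proof, meant to be read directly off the preceding theorem via exactly the uniqueness-plus-noninjectivity reasoning you give. For the global group law claim there is in fact no subtlety, since morphisms of global group laws preserve the distinguished element $e$ and hence commute with the unique map from the initial object $\mathbf{L}$, so any isomorphism would automatically be level-wise under $MU_A^*$.

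Your final paragraph goes beyond the paper by flagging the ambiguity in ``isomorphic equivariant formal group laws'' (strict versus up to coordinate change) and resolving it: after inverting the nontrivial Euler classes, one ring is the non-Noetherian $\Z[e_V^{\pm 1}, \gamma_i^V][\beta^{\pm 1}]$ on infinitely many $\gamma_i^V$, while $KU_A^*[S^{-1}]$ is a localization of the finitely generated $\Z$-algebra $R(A)[\beta^{\pm 1}]$ and hence Noetherian, so the two cannot be abstractly isomorphic. The paper does not draw this distinction and presumably intends only the strict sense, but your addition is correct and genuinely strengthens the conclusion. The detour through Theorem~\ref{Uniqueness} and \cite[Theorem 1.2]{CGK02} is unnecessary for the strict case---the theorem's uniqueness clause already handles it directly---but it does no harm.
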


\begin{corollary}
\label{MayConjTori}
May's $MU_G \wedge_{MU} KU$ is not weakly equivalent to $KU_G$ as an $MU_G$-algebra for any nontrivial compact Lie group $G$.
\end{corollary}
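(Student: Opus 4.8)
The plan is to deduce the statement for an arbitrary nontrivial compact Lie group $G$ from the abelian case already established by the theorem preceding Corollary~\ref{MayConjTori}, simply by restricting to a nontrivial closed abelian subgroup. First I would note that every nontrivial compact Lie group $G$ contains a nontrivial closed abelian subgroup $A$: if $\dim G>0$ the identity component is a positive-dimensional compact connected Lie group and hence contains a circle $A\cong\mathbb{T}$, while if $G$ is finite and nontrivial it contains a subgroup $A\cong\Z/p$ for a prime $p$ dividing $|G|$.

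Next I would record how May's construction behaves under restriction. The forgetful (restriction) functor $res_A^G$ from $G$-spectra to $A$-spectra along $A\hookrightarrow G$ is symmetric monoidal and fixes the inflated nonequivariant $MU$ and $KU$; moreover, since $\MU$ and $\KU$ are global ring spectra (Section~2.1), one has $res_A^G(MU_G)\simeq MU_A$ and $res_A^G(KU_G)\simeq KU_A$ as ($\mathbb{T}$-complex-oriented) ring spectra, compatibly with all restriction maps. Because May's $MU_G\wedge_{MU}KU$ is built by smashing $MU_G$ with $KU$ over the nonequivariant $MU$, monoidality then yields an equivalence of $MU_A$-algebras
\[ res_A^G\!\left(MU_G\wedge_{MU}KU\right)\ \simeq\ MU_A\wedge_{MU}KU. \]

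The argument concludes by contradiction. Suppose $MU_G\wedge_{MU}KU$ and $KU_G$ were weakly equivalent as $MU_G$-algebras (allowing a zig-zag). Applying $res_A^G$ and the identifications above would give a weak equivalence of $MU_A$-algebras $MU_A\wedge_{MU}KU\simeq KU_A$, hence a ring isomorphism $\pi_*^A(MU_A\wedge_{MU}KU)\xrightarrow{\ \sim\ }\pi_*^A(KU_A)=KU_A^*$ compatible with the two $MU_A$-algebra structure maps, i.e. an isomorphism of rings under $MU_A^*$. By Proposition~\ref{MayCpxOri} the source is $MU_A^*\otimes_{MU^*}KU^*$ with $MU_A^*$ acting through $x\mapsto x\otimes 1$, so by the uniqueness clause of the theorem above this isomorphism would have to coincide with the unique map of rings under $MU_A^*$ from $MU_A^*\otimes_{MU^*}KU^*$ to $KU_A^*$ — which is not injective once $A$ is nontrivial, a contradiction. (Equivalently one could invoke the corollary that the $A$-equivariant formal group laws of $MU_A\wedge_{MU}KU$ and of $KU_A$ are non-isomorphic, together with the fact that an equivalence of ring $A$-spectra under $MU_A$ preserves the classified formal group law.)

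The only genuinely technical point — the ``hard part'', such as it is — is the compatibility of May's relative smash product $-\wedge_{MU}-$ with restriction and the identification $res_A^G(MU_G)\simeq MU_A$ as $MU$-algebras; once these are pinned down using globality of $\MU$ and $\KU$, the remainder is formal and the substantive content is carried entirely by the abelian case. I would also be careful that the conclusion obtained is about $MU_G$-\emph{algebra} equivalences, exactly as stated in Corollary~\ref{MayConjTori}; the $MU_G$-module form of Conjecture~\ref{MayConj} would require the stronger input that $MU_A^*\otimes_{MU^*}KU^*$ and $KU_A^*$ are non-isomorphic as $MU_A^*$-modules, which the preceding theorem does not address.
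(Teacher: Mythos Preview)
Your proposal is correct and follows essentially the same approach as the paper: restrict to a nontrivial closed abelian subgroup $A\subset G$, use that restriction carries $MU_G\wedge_{MU}KU$ and $KU_G$ to $MU_A\wedge_{MU}KU$ and $KU_A$, and then invoke the abelian case. The paper's proof is a two-line version of exactly this; you simply supply more justification for the restriction identities (via globality of $\MU$ and $\KU$ and monoidality of $res_A^G$) and, helpfully, flag that the argument establishes non-equivalence as $MU_G$-\emph{algebras} rather than as $MU_G$-modules, a nuance the paper does not make explicit.
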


\begin{proof}
If May's Conjecture \ref{MayConj} holds for some group $G$, then it must hold for every subgroup $H$ of $G$, because the restrictions of $MU_G \wedge_{MU} KU$ and $KU_G$ to a $H$-spectra respectively are $MU_H \wedge_{MU} KU$ and $KU_H$. The result follows from the fact that every nontrivial compact Lie group contains a nontrivial abelian subgroup.
\end{proof}

The following program could feasibly lead to an alternate proof of the failure of May's conjecture. There is a notion of height for an equivariant formal group law \cite{Str11} \cite{HM23} over appropriate fields, and it may be possible to state a general theorem computing this height for the examples arising from Proposition \ref{MayCpxOri}. After base-changing to a certain field, it may be possible to show that the $A$-equivariant formal group laws determined by $MU_A \wedge_{MU} KU$ and $KU_A$ have different heights.

\section{$BP_A$ and a lift of Quillen's idempotent}

Throughout this section, we fix a compact abelian Lie group $A$ and prime $p$, and implicitly localize everything at $p$. Localization is flat, so the $p$-localization of $MU_A$ is $A$-Landweber exact. In particular, any $p$-local cohomology theory determined by the $p$-localization of $MU_A$ is in fact determined by non-$p$-local $MU_A$, in the $A$-Landweber exactness sense.

Nonequivariantly, Quillen \cite{Qui69} constructs an idempotent natural transformation on $p$-local $MU$-cohomology whose image is $BP$-cohomology. In \cite{Ara79} this idempotent is shown to lift to the $C_2$-equivariant $MU$ known as Atiyah-Real $MU$, or $M \R$, and thus constructs a $C_2$-equivariant version of $BP$. Instead, we are interested in a distinct equivariant lift of the Quillen idempotent to the $MU_A$ for the purposes of constructing $BP_A$. 

The quick and dirty way to construct the cohomology theory $BP_A^*$ is to write $L_A \cong L_A \otimes_{L} L$, apply the Quillen idempotent in the second factor to obtain an idempotent on $L_A$, then observe that the image of this idempotent is a flat $MU_A^*$-module. The rest of this section is really an argument that this is a reasonable thing to do, as it has an equivariant formal group law interpretation, and agrees with another candidate construction for $BP_A$.

May constructs $MU_A \wedge_{MU} BP$ in \cite{May98} which in a later section we will show is naturally isomorphic to our $BP_A$. An advantage of May's construction is that it only requires $BP$ to be an $MU$-module (in other words it is relatively agnostic to the particular construction of $BP$). However, this makes opaque the connection of $BP_A$ to formal group laws. Conversely, the main advantage of our construction is that it makes it clear that $BP_A$ is $A$-Landweber exact via equivariant formal group law considerations.

Classically, the Quillen idempotent arises by studying the special property of formal group laws known as $p$-typicality. In particular, any formal group law over a $\Zp$-algebra has a canonical change-of-coordinate to a $p$-typical formal group law. A similar story can be told equivariantly, and the key idea is to define $p$-typicality as follows: an equivariant formal group law classified by $L_A \rightarrow k$ is $p$-typical if the nonequivariant formal group law classified by \[ L_{ \{ 1 \} } \xrightarrow{res_{ \{ 1 \} }^A} L_A \rightarrow k \] is in the usual sense.

\subsection{$p$-typicality for ordinary formal group laws}

The material in this subsection is adapted from \cite{Qui69} and \cite{COCTALOS}. Recall that we are implicitly working $p$-locally. We begin with the association of the group of curves to any formal group. This is the set of maps $\A \rightarrow X$ with the group operation induced by the group structure on $X$. This group has three kinds of natural operations: homothety, Verschiebung, and Frobenius.

Homothety is given by precomposing a given map $\A \rightarrow X$ with multiplication by a fixed $r \in k$, $\A \xrightarrow{r} \A \rightarrow X$. For each $n \in \mathbb{N}$, the $n$th Verschiebung $V_n$ is given by precomposing with the $n$th power map $\A \xrightarrow{x \mapsto x^n} \A \rightarrow X$. Lastly, the $n$th Frobenius is equivalently either given by the formula $(F_n g)(x) = \Sigma_{i=1,...,n}^G g(\sigma^i x^{1/n})$ or is given by the Verschiebung on the Pontryagin dual group of the group of curves.

\begin{definition}
An element $f$ of the group of curves is $p$-typical if $F_q f = 0$ for any prime $q \neq p$. Equivalently $F_n f = 0$ for any $n$ coprime to $q$.
\end{definition}

Note that any coordinate on a formal group specifies a curve. The associated formal group law is called $p$-typical if the curve determined in this way is a $p$-typical curve.

\begin{proposition}[Cartier]
Any formal group has a $p$-typical coordinate.
\end{proposition}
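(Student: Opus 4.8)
The plan is to follow Cartier's argument via the group of curves together with an explicit idempotent operator projecting onto the $p$-typical curves, and then to apply this operator to the ``tautological'' curve of a chosen coordinate. Fix a coordinate $x$ on the formal group $X$; this identifies the group of curves with $tk[[t]]$ under the formal group addition $+_F$, and the tautological curve $\gamma_0(t) = t$ corresponds to $x$ itself. Recall the operators introduced above: for a prime $q$, the Verschiebung $V_q$ (precompose with $\A \xrightarrow{t \mapsto t^q} \A$) and the Frobenius $F_q$. I will use the standard Cartier relations $F_q V_q = [q]$ (the ``multiply by $q$'' operation on curves), together with $F_r V_q = V_q F_r$, $V_r V_q = V_{rq}$, $F_r F_q = F_{rq}$ for coprime $r, q$; each follows directly from the defining formulas recalled above. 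Since we have localized at $p$, for $q \neq p$ the operation $[q]$ is invertible on curves — it is composition with the invertible power series $[q]_F(t) = qt + \cdots$ — so the group of curves is a $\Zp$-module and $\tfrac1q V_q F_q$ is a well-defined additive operator.

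First I would record the properties of the candidate local projectors. For each prime $q \neq p$ put $e_q := \mathrm{id} - \tfrac1q V_q F_q$. From $F_q V_q = [q]$ one gets $(V_q F_q)^2 = V_q(F_qV_q)F_q = q\, V_q F_q$, whence $e_q^2 = e_q$, and $F_q e_q = F_q - \tfrac1q (F_q V_q) F_q = F_q - F_q = 0$; from $F_r V_q = V_q F_r$ together with the commutation of the $V$'s (resp.\ the $F$'s) one obtains $e_q e_r = e_r e_q$ for distinct $q, r$. Moreover $V_q$ multiplies $t$-adic order by $q$, so $e_q \gamma \equiv \gamma \pmod{t^q}$ for every curve $\gamma$; hence the composite $\epsilon := \prod_{q \neq p} e_q$ converges $t$-adically to a $\Zp$-linear idempotent operator, because to compute $\epsilon\gamma$ modulo $t^N$ only the finitely many (pairwise commuting) factors $e_q$ with $q < N$ matter. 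Finally, since each $F_q$ is $t$-adically continuous and commutes with every $e_r$ with $r \neq q$, one concludes $F_q \epsilon = 0$ for all primes $q \neq p$; thus $\epsilon\gamma$ is a $p$-typical curve for every $\gamma$.

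To finish, apply $\epsilon$ to the tautological curve $\gamma_0(t) = t$. Each factor $e_q$ alters $\gamma_0$ only in degrees $\geq q \geq 2$, so $\epsilon\gamma_0 = t + O(t^2)$; in particular its linear coefficient is a unit, so $\epsilon\gamma_0$ is again a coordinate on $X$. By the previous paragraph it is $p$-typical as a curve, and by the definition above this means the associated formal group law is $p$-typical. Hence $\epsilon\gamma_0$ is the desired $p$-typical coordinate.

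I expect the one genuinely delicate point to be the convergence-and-continuity step: one must check that the infinite composite $\prod_{q \neq p} e_q$ is well defined on $tk[[t]]$, that it stays idempotent in the limit, and — most importantly — that $F_q$ commutes with this limit, so that the output is really annihilated by every $F_q$ with $q \neq p$. Concretely, writing $P_N$ for the partial composite over the primes $q < N$, one checks $F_q P_N = 0$ once $N > q$ by sliding $F_q$ past the commuting factors until it meets $e_q$, and then passes to the limit using continuity of $F_q$. Everything else is bookkeeping with the Cartier relations recorded above; if the topological argument turns out to be awkward to state cleanly, one can instead construct $\epsilon\gamma_0$ coefficient by coefficient via Hazewinkel's functional-equation recursion, trading the convergence issue for an inductive estimate.
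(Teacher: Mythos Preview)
Your proposal is correct and follows essentially the same approach as the paper: both apply Cartier's idempotent operator on the group of curves to the tautological curve and observe the result is a $p$-typical coordinate. The only cosmetic difference is that the paper writes the idempotent as the M\"obius-function sum $\epsilon = \sum_{(m,p)=1} \tfrac{\mu(m)}{m} V_m F_m$ while you write it as the infinite product $\prod_{q\neq p}\bigl(1 - \tfrac1q V_q F_q\bigr)$; expanding your product via the Cartier relations $V_qV_r = V_{qr}$, $F_qF_r = F_{qr}$, $F_qV_r = V_rF_q$ (for coprime $q,r$) recovers exactly the sum, and your write-up supplies the verifications the paper leaves implicit.
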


\begin{proof}
Starting with any coordinate on a formal group, construct a canonical change-of-coordinates to a $p$-typical coordinate as follows. First, define an operator on the group of curves by \[ \epsilon = \Sigma_{(m,p)=1} \frac{\mu(m)}{m} V_m F_m \] where $\mu(m)$ is the M\"{o}bius function, $V_m$ is the Verschiebung, and $F_m$ is the Frobenius. Next, given a coordinate $x$ (in other words, choosing an isomorphism from the formal group $X$ with $spf(k[[x]])$), consider $\epsilon x$. It can be verified that $\epsilon x$ specifies a new coordinate on $X$ which is $p$-typical.
\end{proof}

For the universal formal group law over $L$ (the one classified by the $p$-localization map from the non-$p$-local Lazard ring), the new formal group law obtained by passing to a $p$-typical coordinate is classified by a map $L \rightarrow L$ which is idempotent. Furthermore, the coordinate of a formal group law classified by $L \rightarrow k$ is $p$-typical if and only if the classification map factors through this idempotent.

Classically, Quillen uses this idempotent $L \rightarrow L$, which we call the Quillen idempotent, to provide a construction of $BP$. Specifically, he constructs a natural transformation from $MU^*(-)$ to itself which is idempotent (by slight abuse we also call this the Quillen idempotent), and whose image is $BP^*(-)$. This natural transformation is constructed by appealing to Yoneda's lemma: natural transformations from $MU^*(-)$ to itself are totally determined by the image of the identity in $MU^* MU$. By the splitting principle, it is possible to specify this image of the identity merely be specifying what happens to the first Chern class of the canonical bundle over $\C P^\infty$. The first Chern class is precisely the coordinate of the universal formal group law over $L$, so $p$-typicalization gives a new element of $MU^*(\C P^\infty)$.

Alternatively, we may construct the Quillen idempotent $MU^*(-) \rightarrow MU^*(-)$ as follows. The image $BP^*$ of the idempotent $L \rightarrow L$ is an $L$-module summand of $L$, and hence is projective and therefore flat over $L \cong MU^*$. Thus $MU^*(-) \otimes_{MU^*} BP^* =: BP^*(-)$ is a cohomology theory. By \cite[Theorem 4]{Qui69} this returns the same cohomology theory as the construction in the above paragraph. This is the perspective which we will generalize to the equivariant case.

\subsection{$p$-typicality for equivariant formal group laws}

Recall that any $A$-equivariant formal group law classified by $L_A \rightarrow k$ produces a nonequivariant formal group law classified by \[ L \xrightarrow{res_A^{ \{ 1 \} }} L_A \rightarrow k. \] The author is grateful to Neil Strickland for suggesting that $p$-typicality of an equivariant formal group law should be defined in the following way.

\begin{definition}[Strickland]
An $A$-equivariant formal group law is $p$-typical if the underlying nonequivariant formal group is $p$-typical.
\end{definition}

In the geometric approach to formal group laws of \cite{Str11}, this definition has the following intepretation. First, recall that an $A$-equivariant formal group is a formal scheme $X$ equipped with an equivariant map $A^* \rightarrow X$. $X$ contains a subscheme $\widehat{X}$, the infinitesimal part of $X$, which is an ordinary formal group.

Extracting $\widehat{X}$ from $X$ corresponds to extracting a nonequivariant formal group law from an $A$-equivariant formal group law classified by $L_A \rightarrow k$ by precomposing with $res_A^{ \{ 1 \} } : L_{ \{ 1 \} } \rightarrow L_A$. A coordinate $x$ on $X$ restricts to a coordinate $\widehat{x}$ on $\widehat{X}$, and we say that $x$ is $p$-typical if $\widehat{x}$ specifies a $p$-typical curve on $\widehat{X}$. 

Our definition of $p$-typicality therefore fails to capture information about the behavior of the original coordinate $x$ away from the infinitesimal part. It would be interesting to determine if there is an alternative definition of $p$-typicality which captures this information, particularly because it feasibly could give rise to another $A$-equivariantly complex oriented $A$-spectrum whose underlying spectrum is $BP$.

\begin{proposition}
There is a canonical change-of-coordinates rendering any given equivariant formal group law $p$-typical.
\end{proposition}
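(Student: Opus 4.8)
The plan is to reduce the equivariant statement to the nonequivariant Cartier theorem (the earlier Proposition attributed to Cartier) by exploiting the definition of $p$-typicality for equivariant formal group laws, which only sees the infinitesimal part. Suppose we are given an $A$-equivariant formal group law classified by $f : L_A \rightarrow k$, with coordinate $x$. Restricting along $res_A^{\{1\}} : L \rightarrow L_A$ gives a nonequivariant formal group law with coordinate $\widehat{x}$, the restriction of $x$ to the infinitesimal part $\widehat{X}$. By Cartier, there is a canonical change-of-coordinates on $\widehat{X}$, given by the operator $\epsilon = \sum_{(m,p)=1} \frac{\mu(m)}{m} V_m F_m$ acting on the group of curves, taking $\widehat{x}$ to a $p$-typical coordinate $\epsilon \widehat{x}$.

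The key step is to show that this change-of-coordinates lifts to a change-of-coordinates on the ambient equivariant formal group $X$. First I would recall from \cite{Str11} that a coordinate on an $A$-equivariant formal group $X$ restricts to a coordinate on $\widehat{X}$, and conversely that changes of coordinates on $X$ (i.e.\ automorphisms of $X$ fixing the map $A^* \rightarrow X$) restrict to changes of coordinates on $\widehat{X}$. The cleanest route is universal: since $MU_A^* \cong L_A$ is the initial object in global group laws (or rather, since every $A$-equivariant formal group law is classified by a map out of $L_A$), it suffices to produce the canonical $p$-typicalizing change-of-coordinates for the universal $A$-equivariant formal group law over $L_A$ itself. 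For this I would run Quillen's Yoneda-style argument equivariantly: a change-of-coordinates on the universal $X_{L_A}$ is determined by where it sends the canonical coordinate, which by the splitting-principle-type description of $MU_A^*(BU(1)_A)$ (or the relevant projective-bundle computation) is pinned down by its effect on the first equivariant Chern class. Then I would define the new coordinate by applying the curve operator $\epsilon$ to $\widehat{x}$ and checking that the resulting element of the group of curves on $\widehat{X}_{L_A}$ extends compatibly to a coordinate on $X_{L_A}$; concretely, one uses that the Euler-class directions of $X$ are untouched and the infinitesimal direction is modified by $\epsilon$, and that these glue because the comparison between coordinates on $X$ and on $\widehat X$ is an isomorphism on the relevant algebra of functions.

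Once the universal case is established, the general case follows by naturality: given $f : L_A \rightarrow k$ classifying our equivariant formal group law, push the canonical change-of-coordinates forward along $f$. That the resulting coordinate is $p$-typical is immediate from the definition, since its restriction to $\widehat{X}$ is the pushforward of $\epsilon \widehat{x}$, which is $p$-typical by Cartier's theorem and the functoriality of $V_m, F_m$ under ring maps. Canonicity also follows, since $\epsilon$ is a universally-defined operator on curves and all constructions were natural.

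The main obstacle I expect is the gluing/extension step: verifying that the operator $\epsilon$, which is a priori only defined on the group of curves of the infinitesimal formal group $\widehat{X}$, actually determines a \emph{coordinate} on the full equivariant formal group $X$ rather than merely on $\widehat{X}$. This requires understanding precisely how the ring of functions on $X$ decomposes relative to $\widehat{X}$ and the Euler classes attached to $A^*$ — essentially, one needs that a coordinate on $X$ is equivalent data to a coordinate on $\widehat X$ together with the fixed map $A^* \to X$, so that modifying only the $\widehat X$-part by a nonequivariant change-of-coordinates is legitimate. The relevant input is the structure theory of equivariant formal groups in \cite{Str11} together with Hausmann's presentation of $L_A$ after inverting Euler classes from \cite[Proposition 2.11]{Hau22}, which exhibits the coordinate data explicitly; checking compatibility there is where the real work lies, though it is ultimately a formal consequence of the definitions rather than a genuinely new idea.
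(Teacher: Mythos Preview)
Your approach is genuinely different from the paper's, and the gap you yourself identify is real and not easily closed by the route you outline.

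The paper's proof is much shorter and purely algebraic: since $MU_A^*$ is an $MU^*$-algebra (via $res_A^{\{1\}}$, with a retraction), one writes $MU_A^* \cong MU_A^* \otimes_{MU^*} MU^*$ and simply applies the nonequivariant Quillen idempotent $\xi : MU^* \to MU^*$ in the second factor to obtain $\xi_A : MU_A^* \to MU_A^*$. Precomposing any classifying map $L_A \to k$ with $\xi_A$ gives the canonical $p$-typicalization, and by construction $\xi_A$ restricts to $\xi$ on $MU^*$, so the underlying nonequivariant formal group law becomes $p$-typical. No geometry of $X$ versus $\widehat X$ is needed; the definition of equivariant $p$-typicality was chosen precisely so that base-changing the nonequivariant idempotent suffices.

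Your plan instead tries to lift the curve operator $\epsilon$ from $\widehat X$ to a change-of-coordinates on all of $X$, and you correctly flag the gluing/extension step as the obstacle. This is not merely a formality: your assertion that ``a coordinate on $X$ is equivalent data to a coordinate on $\widehat X$ together with the fixed map $A^* \to X$'' is exactly what needs proof, and it is not obviously true in general. The paper in fact discusses your Yoneda/Chern-class route explicitly as a desirable alternative description and calls out a ``major hurdle'': after modifying the first Chern class, one must still choose compatible Thom classes for higher-rank equivariant bundles, and the equivariant projective-bundle/splitting-principle computations do not give the clean power-series ring that makes the nonequivariant argument work. So the route you sketch is an interesting open direction, not a completed proof; the paper bypasses it entirely with the tensor-product trick.
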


\begin{proof}
First, recall that the equivariant Lazard ring $L_A$ classifying $A$-equivariant formal group laws is naturally isomorphic to $MU_A^*$. Now note that functoriality of $MU_A^*$ in $A$ allows us to realize $MU_A^*$ as an $MU^*$-algebra. Better yet, there are ring maps $MU^* \rightarrow MU_A^* \rightarrow MU^*$ whose composition is the identity.

Next, we can apply the Quillen idempotent on coefficient rings \[ \xi : MU^* \rightarrow MU^* \] to obtain a map \[ \xi_A : MU_A^* \cong MU_A^* \otimes_{MU^*} MU^* \rightarrow MU_A^* \otimes_{MU^*} MU^* \cong MU_A^* \] whose image is $MU_A^* \otimes_{MU^*} BP^*$. Clearly this map restricts to the Quillen idempotent $\xi$ on $MU^*$. If we have an $A$-equivariant formal group law classified by $L_A \rightarrow k$, then precomposing with $\xi_A$ induces the desired change-of-coordinates.
\end{proof}

\begin{proposition}
\label{BP_A}
The $p$-typicalization of the coordinate of an equivariant formal group law defines an equivariant lift of Quillen's idempotent. The image of this idempotent is a cohomology theory represented by an $A$-spectrum $BP_A$ whose underlying nonequivariant spectrum is $BP$.
\end{proposition}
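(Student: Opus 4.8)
The plan is to lift the Quillen idempotent to an idempotent ring self-map of the $A$-spectrum $MU_A$ and split it. By the preceding proposition, $\xi_A \colon MU_A^* \to MU_A^*$ is an idempotent ring endomorphism which restricts to the Quillen idempotent on $MU^*$, has image $MU_A^* \otimes_{MU^*} BP^*$, and classifies the $p$-typicalization of the universal $A$-equivariant formal group law. Concretely, this $p$-typicalization is the reparametrization $u \mapsto \epsilon(u)$ of the standard $A$-equivariant complex orientation $u$ of $MU_A$ by Cartier's operator $\epsilon$; since the $m=1$ summand of $\epsilon$ is the identity, $\epsilon(u)$ is again an $A$-equivariant complex orientation of $MU_A$, with orientation class in degree $2$. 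As $MU_A$ is $A$-Landweber exact, \cite[Theorem~1.2]{CGK02} --- in the form used in the proof of Theorem~\ref{Uniqueness} --- supplies a map of ring $A$-spectra $e \colon MU_A \to MU_A$ corresponding to $\epsilon(u)$, so that $e_* = \xi_A$ on homotopy. Applying Cartier's change of coordinates twice has the same effect as applying it once (the operator $\epsilon$ is idempotent), so $e \circ e$ corresponds to the same complex orientation of $MU_A$ as $e$ does; by the bijectivity in \cite[Theorem~1.2]{CGK02} this forces $e \circ e \simeq e$. Hence $e$ is an idempotent ring self-map refining the Quillen idempotent, which establishes the first assertion.

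Next I would split $e$. The homotopy category of $A$-spectra is idempotent complete, since it admits countable coproducts, so $e$ exhibits a wedge summand $BP_A := \mathrm{im}(e)$ of $MU_A$. The cohomology theory represented by $BP_A$ is the image of the idempotent natural operation on $MU_A^*(-)$ given by postcomposition with $e$, hence a retract of the cohomology theory $MU_A^*(-)$, and is therefore itself a cohomology theory on compact $A$-spectra, in particular on compact $A$-spaces. On homotopy, $\pi^A_*(BP_A) = \mathrm{im}(\xi_A) = MU_A^* \otimes_{MU^*} BP^*$; and because the orientation $\epsilon(u)$, and hence the map $e$, restrict compatibly along group homomorphisms, the analogous statement holds for every closed subgroup $B \le A$, giving $\pi^B_*(BP_A) = MU_B^* \otimes_{MU^*} BP^*$ compatibly with restriction maps. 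In particular $BP_A$ is complex oriented with $A$-equivariant formal group law the $p$-typicalization of the universal one, and is therefore $A$-Landweber exact via the equivariant formal group law interpretation.

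For the last assertion, restriction to the trivial group $res^A_{\{1\}}$ is an exact, coproduct-preserving functor with $res^A_{\{1\}}(MU_A) \simeq MU$, and it carries $e$ to the ring self-map of $MU$ obtained by reparametrizing the standard orientation of $MU$ by $\epsilon$, i.e.\ to the Quillen idempotent. Hence $res^A_{\{1\}}(BP_A) \simeq \mathrm{im}(\text{Quillen idempotent}) \simeq BP$. Alternatively one may compute $\pi_*(res^A_{\{1\}}(BP_A)) \cong BP_A^*(A_+) \cong MU_A^*(A_+) \otimes_{MU_A^*} \bigl(MU_A^* \otimes_{MU^*} BP^*\bigr) \cong MU^* \otimes_{MU^*} BP^* = BP^*$, using the induction--restriction adjunction $[A_+ \wedge Z, BP_A]^A \cong [Z, res^A_{\{1\}} BP_A]$, the identification $MU_A^*(A_+) \cong MU^*$ as an $MU_A^*$-module via restriction, and nonequivariant Landweber exactness of $BP$; running the same computation on a general finite $Z$ identifies the cohomology theory $(res^A_{\{1\}} BP_A)^*(-)$ with $BP^*(-)$, hence $res^A_{\{1\}}(BP_A) \simeq BP$.

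The step I expect to be the main obstacle is the first one: producing the honest idempotent ring self-map $e$ of $MU_A$. What makes it go through is that $p$-typicalization is a \emph{reparametrization of the complex orientation}, not an arbitrary ring endomorphism of $MU_A^*$, so that \cite[Theorem~1.2]{CGK02} applies; and one must take care to upgrade the algebraic identity $\xi_A^2 = \xi_A$ to $e \circ e \simeq e$ through the bijection between complex orientations and ring maps out of $MU_A$, rather than through any claim that a self-map of $MU_A$ is detected on homotopy. An advantage of this route is that it does not rely on flatness of $BP^*$ over $MU^*$; the only input from the nonequivariant theory is the existence of the Quillen idempotent.
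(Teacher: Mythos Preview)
Your argument is correct and takes a genuinely different route from the paper. The paper never lifts $\xi_A$ to a map of spectra; it observes instead that the image $BP_A^* := \mathrm{im}(\xi_A)$ is an $MU_A^*$-module summand of $MU_A^*$, hence projective and therefore flat, so that $MU_A^*(-) \otimes_{MU_A^*} BP_A^*$ is a cohomology theory on compact $A$-spaces and may be represented by an $A$-spectrum $BP_A$; the idempotent natural transformation is then $\mathrm{Id} \otimes \xi_A$ on $MU_A^*(-)$, whose restriction to the trivial group is Quillen's $\mathrm{Id} \otimes \xi$, identifying the underlying spectrum with $BP$. You instead realize the idempotent at the spectrum level, using the bijection of \cite[Theorem~1.2]{CGK02} between ring maps out of $MU_A$ and complex orientations to produce $e$, deduce $e \circ e \simeq e$ from $\xi_A^2 = \xi_A$ (note that under this bijection a ring map $f$ corresponds to $f_*$ on $\pi^A_*$, so the bijection respects composition), and then split $e$ in the idempotent-complete homotopy category. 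Your route buys an honest ring-spectrum retract of $MU_A$ --- so a ring structure on $BP_A$ comes for free --- and avoids Brown representability; the paper's route stays closer to its overall theme of cohomology theories determined by $MU_A^*(-)$ and lands directly on the formula $BP_A^*(-) \cong MU_A^*(-) \otimes_{MU_A^*} BP_A^*$, which is exactly what is needed for $A$-Landweber exactness and for the later uniqueness argument (Theorem~\ref{Uniqueness}) identifying $BP_A$ with May's $MU_A \wedge_{MU} BP$.
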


\begin{proof}
The $p$-typicalization classification map $\xi_A : MU_A^* \rightarrow MU_A^*$ exhibits $BP_A^*$ as an $MU_A^*$-module summand; in particular it is projective, hence flat over $MU_A^*$. The composition of flat ring maps is flat, so $BP_A^*$ is flat over $MU_A^*$. In particular, $MU_A^*(-) \otimes_{MU_A^*} BP_A^*$ is a cohomology theory with representing $A$-spectrum $BP_A$.

Next, note that $BP_A^*(-)$ is a summand of $MU_A^*(-)$ via the factorization of \[ Id \otimes \xi_A : MU_A^*(-) \otimes_{MU_A^*} MU_A^* \rightarrow MU_A^*(-) \otimes_{MU_A^*} MU_A^* \] as a surjection onto its image $BP_A^*(-)$ followed by inclusion of the image. Now, since the nonequivariant map underlying $\xi_A$ is $\xi$, it follows that the nonequivariant natural transformation underlying $Id \otimes \xi_A$ is the Quillen idempotent $Id \otimes \xi$.
\end{proof}

With the recent work of Hausmann and Meier \cite{HM23} on the invariant prime ideals of the Hopf algebra $( (MU_A)_*, (MU_A)_* MU_A ) \cong (L_A,S_A)$, which represents the groupoid of $A$-equivariant formal group laws, it stands to reason that explicit computations of $BP_A^*$ in the style of \cite{Sin01} \cite{Str01} \cite{AK16} \cite{KL21} are within reach. Generators for $MU_A^*$ clearly give generators for $BP_A^*$, but presumably there is a more efficient presentation, perhaps one that exhibits $BP_A^*$ as a deformation of $BP^*$.

We can certainly form the Hopf algebroid $((BP_A)_*, (BP_A)_*(BP_A))$, and it seems likely that it classifies $p$-typical equivariant formal group laws and their isomorphisms. In this case, one may guess that the invariant prime ideals of $( (BP_A)_*, (BP_A)_* BP_A )$ correspond to those invariant prime ideals of $( L_A, S_A )$ in exactly the same way that the invariant prime ideals of $(BP_*, BP_* BP)$ correspond to those of $(MU_*, MU_* MU)$.

It would be desirable to have an alternate description of the Quillen idempotent which more closely mimics Quillen's original construction in terms of Chern classes. In the remainder of this subsection, we outline a possible approach up to a major hurdle. 

The first Chern class of $\C P^\infty$ is equivalently the Thom class of the canonical bundle over $\C P^\infty$, using the isomorphism of the Thom space of this bundle with $\C P^\infty$ itself. A similar identification holds for complex projective space of a complete $A$-universe.

Since $MU_A^*$ determines the universal cohomology theory with Thom classes with respect to a certain choice of Thom classes, one could try to define the Quillen idempotent by using $p$-typicality to choose new Thom classes which satisfy the appropriate axioms. The main issue is then that it is not immediately clear how to choose Thom classes for equivariant vector bundles which are not line bundles.

Using the equivariant Schubert cell decomposition of equivariant Grasmannians in the next section along with complex stability of $MU_A^*$, the resulting spectral sequence is additively isomorphic to the Atiyah-Hirzebruch spectral sequence computing $MU^*(BU(n))$. However, the multiplicative structure in the equivariant case is very opaque, as even in the case of $BU(1) \cong \C P ( \mathcal{U}_A )$, the resulting cohomology ring is not necessarily isomorphic to formal power series on a single variable.

\subsection{Properties of $BP_A$}

May \cite{May98} has constructed another object which could reasonably be called $BP_A$, namely $MU_A \wedge_{MU} BP$. We will show in this section that our $BP_A$ is naturally isomorphic to $MU_A \wedge_{MU} BP$. Until then, we will use $BP_A$ to refer only to our previous construction of $BP_A$ and $MU_A \wedge_{MU} BP$ to refer only to May's construction. Furthermore, we will record the fact that $BP_A$ is $A$-Landweber exact.

May's construction makes sense for general compact Lie groups, not necessarily abelian. However, only for abelian groups is there a natural isomorphism $(MU_A \wedge_{MU} BP)^* \cong MU_A^* \otimes_{MU^*} BP^*$ (in general there is merely a spectral sequence relating these). To show that our $BP_A$ is the same as May's, we will show that there is a natural transformation inducing an isomorphism on homotopy groups. We will start by describing some of the structure enjoyed by $BP_A^*$.

\begin{proposition}
The rings $BP_A^*$ are contravariantly functorial in $A$. For $A$ a torus, the rings $BP_A^*$ assemble into a global group law.
\end{proposition}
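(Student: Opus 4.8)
The plan is to deduce both claims directly from the corresponding facts about $MU_A^*$, using that $BP_A^*$ is a retract of $MU_A^*$ compatibly with restrictions. First, for functoriality in $A$: recall from the proof of Proposition \ref{BP_A} that $\xi_A : MU_A^* \to MU_A^*$ is obtained by base-changing the nonequivariant Quillen idempotent $\xi : MU^* \to MU^*$ along $MU^* \to MU_A^*$, i.e.\ $\xi_A = \mathrm{Id} \otimes \xi$ under $MU_A^* \cong MU_A^* \otimes_{MU^*} MU^*$. Since $MU_{(-)}^*$ is a contravariant functor on $Ab$ and the maps $MU^* \to MU_A^*$ are the restrictions along $A \to \{1\}$, any group homomorphism $\alpha : A \to B$ gives a commuting square relating $MU_B^* \to MU_A^*$ to $MU^* \to MU^*$ (the identity), and hence $\mathrm{res}_\alpha \circ \xi_B = \xi_A \circ \mathrm{res}_\alpha$. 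Therefore $\mathrm{res}_\alpha$ carries the image of $\xi_B$ into the image of $\xi_A$; restricting the functor $MU_{(-)}^*$ to these images (equivalently, applying the idempotent-splitting functorially) produces the contravariant functor $A \mapsto BP_A^*$ on $Ab$, together with natural maps $BP_A^* \hookrightarrow MU_A^*$ and $MU_A^* \twoheadrightarrow BP_A^*$ that are split by each other and compatible with all restrictions.

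For the second claim, restrict to tori and check the two axioms of a global group law. The unit $e \in BP_{S^1}^*$ is the image under $\xi_{S^1}$ of the Euler class $e \in MU_{S^1}^*$; equivalently, since $\xi_{S^1}$ acts as the identity on the infinitesimal/nonequivariant part in the way arranged by $p$-typicalization, $e$ is the coordinate of the $p$-typicalized equivariant formal group law. It remains to verify, for each split surjective character $V : \mathbb{T}^n \to \mathbb{T}$ with kernel $K$, that
\[
0 \to BP_{\mathbb{T}^n}^* \xrightarrow{\; V^*(e)\cdot(-) \;} BP_{\mathbb{T}^n}^* \xrightarrow{\;\mathrm{res}_K^{\mathbb{T}^n}\;} BP_K^* \to 0
\]
is exact. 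The key point is that this sequence is a retract of the corresponding exact sequence for $MU$: the maps $MU_{\mathbb{T}^n}^* \rightleftarrows BP_{\mathbb{T}^n}^*$ intertwine $V^*(e_{MU})\cdot(-)$ with $V^*(e_{BP})\cdot(-)$ (because $\xi$ is a ring map fixing the image of $MU^*$, and $e_{BP} = \xi(e_{MU})$, $V^*(e_{BP}) = \xi(V^*(e_{MU}))$), and they commute with $\mathrm{res}_K^{\mathbb{T}^n}$ by the functoriality established above; moreover $\mathrm{res}_{\mathbb{T}^n}^K$ provides a splitting in both the $MU$ and $BP$ rows. A split-exact sequence remains split-exact after applying a natural idempotent retraction, so exactness for $BP$ follows from Hausmann's theorem that $\underline{\pi}_*(\mathbf{MU})$ is a global group law (Theorem, the equivariant Lazard-ring statement). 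This also identifies $e$ as a genuine element satisfying the required divisibility, completing the verification.

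Alternatively, and perhaps more cleanly, one observes that $BP_{(-)}^*$ on tori is exactly the base change of the global group law $\underline{\pi}_*(\mathbf{MU})$ along the ring map $\mathbf{L} \cong MU^* \to BP^*$, in the sense of \cite[Example 5.16]{Hau22} already invoked in Proposition \ref{May_GGL}; indeed $BP_{\mathbb{T}^n}^* \cong MU_{\mathbb{T}^n}^* \otimes_{MU^*} BP^*$ since $\xi_{\mathbb{T}^n} = \mathrm{Id}\otimes\xi$ has image $MU_{\mathbb{T}^n}^*\otimes_{MU^*}BP^*$. Then the global group law structure is immediate from \emph{loc.\ cit.}, and in fact the argument of Proposition \ref{May_GGL} applies verbatim with $E = BP$: the sequence stays exact on tensoring with $BP^*$ over $MU^*$ because $\mathrm{res}_{\mathbb{T}^n}^K$ splits it over $MU^*$.

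I expect the only real subtlety to be bookkeeping: namely, pinning down that the unit $e \in BP_{S^1}^*$ coming from the idempotent splitting is the element for which the exactness axiom must be checked (i.e.\ that it is the coordinate of the $p$-typicalized equivariant formal group law, not some other lift), and that the retraction maps genuinely commute with multiplication by Euler classes. Both reduce to the single fact that $\xi$ is a ring endomorphism of $MU^*$ fixing nothing new beyond $BP^* \subset MU^*$ while $\xi_A = \mathrm{Id}\otimes\xi$, so there is no essential obstacle — the heavy lifting was already done in establishing Proposition \ref{BP_A} and in Hausmann's theorem.
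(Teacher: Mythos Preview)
Your proposal is correct and, in substance, matches the paper's proof. Your ``alternative'' argument is in fact exactly what the paper does: functoriality follows because the idempotents $\xi_A$ commute with restrictions, and the global group law axioms follow because $BP_{(-)}^* \cong MU_{(-)}^* \otimes_{MU^*} BP^*$ and the short exact sequences for $\underline{\pi}_*(\mathbf{MU})$ survive this tensor (the paper invokes flatness of $BP^*$ over $MU^*$; you invoke the $MU^*$-linear splitting $\mathrm{res}_{\mathbb{T}^n}^K$, as in Proposition~\ref{May_GGL} --- either suffices). Your first argument via retracts is a harmless variant of the same idea; just note that its bookkeeping (that the section $BP_{\mathbb{T}^n}^* \hookrightarrow MU_{\mathbb{T}^n}^*$ intertwines multiplication by $V^*(e_{BP})$ with multiplication by $V^*(e_{MU})$) is not entirely trivial, since the relevant Euler classes need not literally coincide under the inclusion --- this is precisely why the flatness/base-change formulation is cleaner and is what the paper uses.
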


\begin{proof}
Since $p$-typicality is detected by restricting to the trivial group, the maps $\xi_A$ commute with restriction maps along group homomorphisms. Thus the collection of subrings $BP_A^*$ are preserved by the restriction maps. By construction there is a map of $Tori$-algebras $\mathbf{L} \rightarrow BP_{(-)}^*$. Since $BP^*$ is a flat $MU^*$-module, tensoring over $MU^*$ with $BP^*$ preserves short exact sequences, particularly the short exact sequences which are required to exist for a global group law.
\end{proof}

\begin{proposition}
\[ BP_A^*(-) \cong MU_A^*(-) \otimes_{MU^*} BP^* \cong MU_A^*(-) \otimes_{MU_A^*} BP_A^*. \] In particular, $BP_A$ is $A$-Landweber exact.
\end{proposition}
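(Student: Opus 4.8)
The plan is to establish the two displayed natural isomorphisms of functors, and then observe that $A$-Landweber exactness is literally the statement that $MU_A^*(-) \otimes_{MU_A^*} BP_A^*$ is a cohomology theory, which has already been verified in the course of proving Proposition \ref{BP_A}. So the real content is identifying the two tensor-product functors. First I would handle the isomorphism $MU_A^*(-) \otimes_{MU^*} BP^* \cong MU_A^*(-) \otimes_{MU_A^*} BP_A^*$ at the level of coefficient rings. Recall that $BP_A^* = MU_A^* \otimes_{MU^*} BP^*$ by the construction of $\xi_A$ in Proposition \ref{BP_A}, where $BP^*$ is the image of the nonequivariant Quillen idempotent $\xi$ and the identification uses the section $MU^* \to MU_A^* \to MU^*$. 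Then for any $MU_A^*$-module $M$ (in particular $M = MU_A^*(X)$) we have
\[ M \otimes_{MU_A^*} BP_A^* = M \otimes_{MU_A^*} \left( MU_A^* \otimes_{MU^*} BP^* \right) \cong M \otimes_{MU^*} BP^*, \]
where the $MU^*$-module structure on $M$ is restricted along $MU^* \to MU_A^*$. This is just associativity/base-change for tensor products, and it is natural in $M$, hence natural in $X$.

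Next I would establish $BP_A^*(-) \cong MU_A^*(-) \otimes_{MU^*} BP^*$. Here I would use the fact, recorded at the end of the proof of Proposition \ref{BP_A}, that $BP_A^*(-)$ is the image of the idempotent natural transformation $Id \otimes \xi_A$ on $MU_A^*(-)$, whose underlying nonequivariant transformation is the classical Quillen idempotent $Id \otimes \xi$. Since $BP^*$ is a flat (indeed projective) $MU^*$-module being a summand of $MU^*$, the functor $MU_A^*(-) \otimes_{MU^*} BP^*$ is a cohomology theory, and the splitting $MU^* \cong BP^* \oplus (\ker \xi)$ induces a splitting of $MU_A^*(-)$ as $MU^*$-modules into $MU_A^*(-) \otimes_{MU^*} BP^*$ and a complementary summand. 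The projection onto the first summand is, by unwinding the definition of $\xi_A$ as base change of $\xi$, precisely the idempotent $Id \otimes \xi_A$; therefore its image $BP_A^*(-)$ is canonically the summand $MU_A^*(-) \otimes_{MU^*} BP^*$. Combining with the previous paragraph gives both isomorphisms, and since $MU_A^*(-) \otimes_{MU_A^*} BP_A^*$ is thereby identified with the cohomology theory $BP_A^*(-)$, the spectrum $BP_A$ is $A$-Landweber exact by definition.

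The main obstacle, such as it is, is bookkeeping: making sure the $MU^*$-module structures used in the various base-change identities are all the restriction along the canonical map $MU^* \to MU_A^*$ arising from functoriality, and that the idempotent $Id \otimes \xi_A$ genuinely agrees with the projection coming from the $MU^*$-module splitting $MU^* = BP^* \oplus \ker\xi$ after tensoring with $MU_A^*(-)$. This compatibility is essentially forced by the definition $\xi_A = MU_A^* \otimes_{MU^*} \xi$ together with the commutation of $\xi_A$ with restriction maps (so that the identifications are consistent across all groups $A$), but it deserves to be spelled out. Everything else is a formal consequence of flatness of $BP^*$ over $MU^*$ and associativity of the tensor product.
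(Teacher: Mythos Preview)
Your proposal is correct and follows essentially the same approach as the paper, which is equally terse: the second displayed isomorphism is the associativity-of-tensor-product identity $M \otimes_{MU_A^*} (MU_A^* \otimes_{MU^*} BP^*) \cong M \otimes_{MU^*} BP^*$ applied to $M = MU_A^*(X)$, and the first is read off from the construction of $BP_A^*(-)$ as the image of $Id \otimes \xi_A$. The only point you do not mention, and which the paper flags with the phrase ``flatness of $p$-localization'', is that throughout this section everything is implicitly $p$-localized, whereas $A$-Landweber exactness is defined with respect to the integral $MU_A$; flatness of $(-)_{(p)}$ over $\Z$ lets one rewrite $MU_A^*(-)_{(p)} \otimes_{(MU_A^*)_{(p)}} BP_A^*$ as $MU_A^*(-) \otimes_{MU_A^*} BP_A^*$ with the non-localized $MU_A^*$, so no harm is done.
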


\begin{proof}
The first isomorphism comes directly from the construction of $BP_A^*(-)$. The second isomorphism follows from $BP_A^* \cong MU_A^* \otimes_{MU^*} BP^*$. $A$-Landweber exactness of $BP_A$ then follows from flatness of $p$-localization.
\end{proof}

\begin{theorem}
\label{BP_A_Identification}
The $BP_A$ of Proposition \ref{BP_A} is weakly equivalent to May's $BP_A$.
\end{theorem}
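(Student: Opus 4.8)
The plan is to deduce this from the uniqueness result Theorem \ref{Uniqueness}, applied with $E = BP_A$ (our construction) and $E' = MU_A \wedge_{MU} BP$ (May's construction). To invoke that theorem I must check three things: that both $E$ and $E'$ are $A$-equivariantly complex oriented with orientation class in degree $2$; that $E = BP_A$ is $A$-Landweber exact; and that $BP_A^*$ and $(MU_A \wedge_{MU} BP)^*$ are isomorphic as rings under $MU_A^*$. The first two are already in hand: $BP_A$ is $A$-Landweber exact by the preceding proposition, and is therefore complex oriented with degree-$2$ orientation class (it is cut out of $MU_A$ by the idempotent $\xi_A$ of Proposition \ref{BP_A}, which merely $p$-typicalizes the degree-$2$ coordinate and so fixes its degree), while $MU_A \wedge_{MU} BP$ is complex oriented with degree-$2$ orientation class by Proposition \ref{MayCpxOri} applied to $E = BP$ together with the observation, recorded after Theorem \ref{Uniqueness}, that spectra from May's construction inherit a degree-$2$ orientation class from $MU_A$.

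The substance of the argument is the identification of coefficient rings under $MU_A^*$. I would first note that the Quillen idempotent $\xi \colon MU^* \to MU^*$ factors as a surjection $\phi \colon MU^* \twoheadrightarrow BP^*$ followed by the inclusion $BP^* \hookrightarrow MU^*$, and that $\phi$ is precisely the map classifying the ($p$-typical) formal group law over $BP^*$. Since $\xi_A = \mathrm{Id}_{MU_A^*} \otimes_{MU^*} \xi$ and $BP^*$ is $MU^*$-flat, the image $BP_A^* = \mathrm{im}(\xi_A)$ is canonically $MU_A^* \otimes_{MU^*} BP^*$, with $MU_A^*$-algebra structure map $a \mapsto a \otimes 1$ (because $\xi$, being a ring map onto $BP^*$, sends $1$ to $1$). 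On the other side, Proposition \ref{MayCpxOri} identifies $(MU_A \wedge_{MU} BP)^*$ with $MU_A^* \otimes_{MU^*} BP^*$ as an $MU_A^*$-algebra, the structure map being $MU_A^* \cong MU_A^* \otimes_{MU^*} MU^* \xrightarrow{\mathrm{Id} \otimes \phi} MU_A^* \otimes_{MU^*} BP^*$, which is again $a \mapsto a \otimes 1$. Hence the two $MU_A^*$-algebras are the same, and in particular isomorphic as rings under $MU_A^*$. Feeding this into Theorem \ref{Uniqueness} produces a weak equivalence of ring $A$-spectra under $MU_A$, which is stronger than the asserted bare weak equivalence.

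The one point that requires genuine care — more a matter of bookkeeping than a real obstacle — is confirming that the Künneth-spectral-sequence isomorphism $(MU_A \wedge_{MU} BP)^* \cong MU_A^* \otimes_{MU^*} BP^*$ coming from \cite{May98} is an isomorphism of $MU_A^*$-\emph{algebras} (not merely of rings) and that it matches the complex orientation exactly as stated in Proposition \ref{MayCpxOri}; this is where one uses naturality of that spectral sequence with respect to the ring map $MU \to BP$, together with the fact that May's $MU_A \wedge_{MU} (-)$ is monoidal and $MU$-linear so that the unit map $MU_A \cong MU_A \wedge_{MU} MU \to MU_A \wedge_{MU} BP$ is the orientation-classifying ring map. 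Granting that, every remaining step is formal.
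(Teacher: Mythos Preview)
Your proposal is correct and follows essentially the same route as the paper: identify the $MU_A^*$-algebra structures on $BP_A^*$ and on $(MU_A \wedge_{MU} BP)^* \cong MU_A^* \otimes_{MU^*} BP^*$ via Proposition \ref{MayCpxOri}, then invoke Theorem \ref{Uniqueness}. The only ingredient the paper makes explicit that you leave implicit is the citation of \cite{CM15} to justify that the Quillen map $MU \to BP$ is $\mathbb{E}_2$, hence that $BP$ is genuinely an $MU$-algebra so that May's $MU_A \wedge_{MU} BP$ carries the ring structure Proposition \ref{MayCpxOri} requires.
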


\begin{proof}
By \cite{CM15} the Quillen idempotent thought of as a map of spectra $MU \rightarrow BP$ is $\mathbb{E}_2$, in particular it exhibits $BP$ as an $MU$-algebra. By Proposition \ref{MayCpxOri}, $MU_A \wedge_{MU} BP$ is $A$-equivariantly complex oriented, and the associated $A$-equivariant formal group law is classified by the base change $\xi_A$ of the Quillen idempotent $\xi$.

It now immediately follows from Theorem \ref{Uniqueness} that $BP_A$ and $MU_A \wedge_{MU} BP$ are weakly homotopy equivalent $A$-spectra.
\end{proof}

It is interesting that our construction of $BP_A$ agrees with $MU_A \wedge_{MU} BP$ but $KU_A$ does not agree with $MU_A \wedge_{MU} KU$. In some sense, this is because $MU_A \wedge_{MU} KU$ is "bigger" than $KU_A$. A stronger notion of $p$-typicality, perhaps one capturing the behavior of the coordinate away from the infinitesimal part of an equivariant formal group law, would have a smaller classifying ring. In particular, if such a definition did lead to an alternate construction of "equivariant Brown-Peterson spectra", then this new construction would likely be smaller than $MU_A \wedge_{MU} BP$ in a similar manner.

We can still define $BP_G$ for nonabelian compact Lie groups $G$ by base-changing the Quillen idempotent. However, it is no longer clear that this recovers May's construction of $BP_G$. Furthermore, since the theory of equivariant formal group laws currently only exists for compact abelian Lie groups, there is no evident equivariant formal group law interpretation of $BP_G$. It is a recent result of S. Kriz \cite{Kri22} that there exists a finite nonabelian group $G$ such that $(MU_G)_*$ is not a flat $MU_*$-module concentrated in even degrees. It would be interesting to know if this result extends to $BP_G$.

It would furthermore be desirable to know that there is a global spectrum $\mathbf{BP}$ which gives rise to the global group law $BP_A^*$ (Conjecture \ref{Global_Existence}). Our usual trick of base-changing the Quillen idempotent $\xi$ does not seem to work, because there is no obvious notion of "global homotopy groups". We could instead attempt to appeal to a version of Brown representability using Mackey functor structures everywhere instead of abelian group structures, but there is again an obstruction: global group laws do not visibly encode the data of transfers. However, It is possible that transfers are encoded more subtly, as there is a relationship between Euler classes and transfers for tori, and additionally the universal global group law $\underline{\pi}_*(\mathbf{MU})$ naturally has the structure of transfers.

\section{Phantom maps}

Fix a compact abelian Lie group $A$. To the author's knowledge, it was first observed by Hopkins that there are no phantom maps between spectra arising from Landweber exact formal group laws. With a few modifications, the proof from Lurie's lecture notes \cite{Lur10} goes through equivariantly.

First, we give our combination theorem/definition describing phantom maps which is the $A$-equivariant analog of \cite[Lemma 5]{Lur10}. The proof is identical; it uses Spanier-Whitehead duality, and the fact that every spectrum is a filtered colimit of suspension spectra or of finite spectra.

\begin{lemma}
\label{LurieLemma5}
For a map $f : E \rightarrow E'$, the following are equivalent:
\begin{enumerate}
\item $f$ induces the zero map on homology theories of $A$-spaces.
\item $f$ induces the zero map on homology theories of $A$-spectra.
\item $f$ induces the zero map on homology theories of finite $A$-spectra
\item $f$ induces the zero map on cohomology theories of finite $A$-spectra.
\item For every finite spectrum $X$ and map $g : X \rightarrow E$, the composition $f \circ g : X \rightarrow E'$ is nullhomotopic.
\end{enumerate}
If any of these equivalent conditions hold, we call $f$ a phantom map
\end{lemma}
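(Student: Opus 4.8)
The plan is to establish a cycle of implications $(1) \Rightarrow (2) \Rightarrow (3) \Rightarrow (4) \Rightarrow (5) \Rightarrow (1)$, modeled directly on Lurie's argument, with the equivariant structure entering only through the fact that the stable homotopy category of $A$-spectra is compactly generated by the suspension spectra of $A$-orbits $\Sigma^\infty_+ A/H$ (hence every $A$-spectrum is a filtered homotopy colimit of finite $A$-spectra), and that finite $A$-spectra are Spanier--Whitehead self-dual up to shift. First I would note that a map $f: E \to E'$ induces a map of homology theories $X \mapsto \pi_*^A(E \wedge X)$ (on $A$-spaces or $A$-spectra as appropriate) and of cohomology theories $X \mapsto [X, E']^A_*$; the content is to see that vanishing of any one of these forces vanishing of the others.

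The implication $(2) \Rightarrow (3)$ is immediate by restriction, and $(1) \Rightarrow (2)$ follows because both $E \wedge (-)$ and $E' \wedge (-)$ commute with suspension and with filtered colimits, so the natural transformation they induce is determined by its values on suspension spectra of $A$-spaces — in fact on suspension spectra of finite $A$-CW complexes, which generate. For $(3) \Leftrightarrow (4)$ I would invoke Spanier--Whitehead duality on the category of finite $A$-spectra: for a finite $A$-spectrum $X$ with dual $DX$, one has natural isomorphisms $\pi_*^A(E \wedge X) \cong E^{-*}_A(DX)$ and likewise for $E'$, and under these identifications the map induced by $f$ on homology of $X$ corresponds to the map induced on cohomology of $DX$; since $X \mapsto DX$ is an equivalence on finite $A$-spectra, one condition vanishes iff the other does. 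For $(3) \Rightarrow (5)$: given $g: X \to E$ with $X$ finite, the element $g \in \pi_0^A(F(X,E)) \cong \pi_0^A(DX \wedge E) = E_0^A(DX)$ (using finiteness of $X$) is carried by $f$ to the class of $f \circ g$ in $E'_0^A(DX)$, which is zero by (3); but $\pi_0^A(DX \wedge E') \cong [X, E']^A_0$, so $f \circ g$ is nullhomotopic. Finally $(5) \Rightarrow (1)$: a class in the homology of an $A$-space $Y$ is represented, after writing $\Sigma^\infty_+ Y$ as a filtered colimit of finite $A$-spectra, by a map $g: X \to E \wedge (\text{something})$ factoring through a finite piece; applying (5) (to the appropriate smash, again finite) kills its image under $f \wedge 1$, and one concludes the induced map on $\pi_*^A(E \wedge Y)$ is zero since every element comes from such a finite stage.

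The main obstacle — really the only place one must be slightly careful equivariantly rather than formally copying Lurie — is the Spanier--Whitehead step and the identification $\pi_0^A(F(X,E)) \cong \pi_0^A(DX \wedge E)$: one needs that finite $A$-spectra (built from finitely many cells $\Sigma^\infty_+ A/H$) are strongly dualizable in the symmetric monoidal category of genuine $A$-spectra, which is standard (the orbits $A/H$ are $A$-ANRs, hence their suspension spectra are dualizable), and that smashing with $E$ commutes with the relevant finite homotopy limits. Once this is in hand, the remaining implications are formal manipulations with filtered colimits and the compact generation of the $A$-stable category. I would also remark that none of this uses anything about $MU_A$ or Landweber exactness — Lemma \ref{LurieLemma5} is purely a statement about the structure of the $A$-equivariant stable category — so the proof is genuinely identical to the nonequivariant one with "finite spectrum" replaced by "finite $A$-spectrum" throughout.
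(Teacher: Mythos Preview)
Your proposal is correct and matches the paper's approach exactly: the paper does not spell out a proof but simply states that it is identical to Lurie's, using Spanier--Whitehead duality for finite $A$-spectra together with the fact that every $A$-spectrum is a filtered colimit of finite $A$-spectra (equivalently, of suspension spectra). Your cycle of implications and the specific uses of dualizability and compact generation are precisely what the paper has in mind. One small wording quibble: finite $A$-spectra are strongly dualizable but not literally ``self-dual up to shift'' (the dual of $A/H_+$ involves a representation-sphere twist), though you correctly state strong dualizability later and that is all the argument needs.
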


\subsection{Reduction to complex generated $A$-spectra}

The essential part of the argument is the following. Landweber exact spectra $E$ are "determined" by $MU$, which is ultimately built up out of finite complexes with only even cells. For any "sufficiently even" spectrum $E'$, a spectral sequence argument shows that the $E'$-cohomology of a finite even complex is concentrated in even degrees. After this, the argument is essentially formal.

We will start by defining an equivariant version of "finite CW-spectrum with only even cells". The motivating examples are equivariant complex Grassmannians and their Thom spaces. To start with, recall that the (nonequivariant) Grassmannians $Gr(k,\C^n)$ have a Schubert cell decomposition associated to any complete flag $\{ F_i \}$ of $\C^n$. Namely, for any nondecreasing sequence $w_0, w_1, ..., w_n$ with $w_0 = 0$, $w_n = k$, and $w_{i+1} - w_i \leq 1$, there is a cell whose interior consists of subspaces $W$ such that $dim(W \cap F_i) = w_i$.

\begin{lemma}
For any finite-dimensional complex $A$-representation $V$, consider the $A$-space $Gr(k,V)$ of $k$-dimensional complex subspaces (not necessarily subrepresentations) of $V$. Then there exists a complete flag such that the corresponding Schubert cell decomposition consists of cells which are $A$-homeomorphic to complex $A$-representations.
\end{lemma}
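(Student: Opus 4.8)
The plan is to build the complete flag equivariantly by choosing it to be a flag of sub-representations of $V$, at least "as much as possible," and then verify that each Schubert cell of the resulting decomposition carries a natural complex linear structure on which $A$ acts linearly. First I would decompose $V = \bigoplus_\chi V_\chi$ into its isotypic components under $A$, pick an ordered basis of $V$ adapted to this decomposition (listing all basis vectors of one isotypic piece before moving to the next), and let $\{F_i\}$ be the resulting complete flag, so that each $F_i$ is spanned by an initial segment of this basis and in particular is a sub-representation of $V$. The point of the adaptedness is that the action of $A$ permutes (indeed fixes) each $F_i$ and acts on $F_i/F_{i-1}$ through a single character $\chi_i$.

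The key step is the identification of each Schubert cell. Recall that for a nondecreasing sequence $w_0 = 0, w_1, \dots, w_n = k$ with $w_{i+1} - w_i \le 1$, the open cell consists of those $W \in Gr(k,V)$ with $\dim(W \cap F_i) = w_i$ for all $i$; equivalently, $W$ has a unique basis in "reduced echelon form" with respect to the flag $\{F_i\}$, with pivots in the positions $j$ where $w_j - w_{j-1} = 1$. Writing out this echelon form, the free parameters are precisely a collection of complex coordinates $z_{ab}$, one for each (pivot row $a$, non-pivot column $b$) with $b$ in an appropriate range, and the cell is the affine space $\C^{d}$ of such tuples. The content of the lemma is that the residual $A$-action, transported through this echelon-coordinate parametrization, is \emph{linear}: since $A$ acts on the ambient basis vectors by scalars (characters), the echelon-form normalization is $A$-equivariant, and each coordinate $z_{ab}$ is rescaled by the character $\chi_a \chi_b^{-1}$ (up to conventions). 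Hence the cell is $A$-homeomorphic — indeed $A$-linearly isomorphic — to the complex $A$-representation $\bigoplus_{a,b} \C_{\chi_a \chi_b^{-1}}$.

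I would organize the verification as follows: (1) check that the echelon-form normalization map, sending $W$ in the open cell to its reduced basis matrix, is $A$-equivariant, using that $A$ acts diagonally on the chosen basis; (2) read off that the nonpivot entries transform by the stated characters; (3) conclude $A$-linearity of the cell. The main obstacle I expect is purely bookkeeping: being careful that the flag really can be chosen from sub-representations (which needs only that $V$ is a sum of one-dimensional representations after extending scalars is irrelevant here since $A$ abelian means every complex representation \emph{is} a sum of characters) and that the echelon normalization genuinely respects the diagonal action rather than merely being set-theoretically defined. There is no deep obstruction: the abelian hypothesis makes every isotypic component a direct sum of copies of a single character, so a diagonal basis exists, and everything reduces to the elementary linear algebra of row-reduction being equivariant for diagonal scaling. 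Finally, I would remark that the same flag, applied to the tautological and complementary bundles over $Gr(k,V)$, gives the analogous statement for the associated Thom spaces, which is the form needed in the sequel.
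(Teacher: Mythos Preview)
Your proposal is correct and follows essentially the same route as the paper: choose a basis of $V$ consisting of eigenvectors for $A$ (which exists since $A$ is compact abelian), take the associated flag of subrepresentations, and observe that in echelon coordinates each free entry transforms by the ratio of two characters, so every Schubert cell is $A$-linearly a complex representation. Your additional grouping of the basis by isotypic component is harmless but unnecessary---any ordering of the one-dimensional summands works---and your closing remark about Thom spaces is the content of the paper's subsequent lemma rather than this one.
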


\begin{proof}
Since $A$ is compact abelian Lie, $V$ decomposes as a direct sum of one-dimensional subrepresentations, $V \cong \oplus_i V_i$. Then take a complete flag to be $F_i = \oplus_{j=1}^i V_i$. Given a sequence $\{ w_i \}$, we can form a new sequence $\sigma(1),...,\sigma(k)$ which record the "jumps", namely $w_{\sigma(i)} - w_{\sigma(i-1)} = 1$.

This flag determines a basis $B_V$ of $V$, and if we have a basis $B_W$ of a subspace $W$, then we can form a matrix with $k$ columns and $n$ rows by writing the elements of $B_W$ in terms of $B_V$. Now we can column reduce, after which the pivot in column $j$ is in row $\sigma(j)$. All entries above this pivot are zero, and all entries below the pivot are parameters describing $W$.

By construction of $B_V$, the action of $A$ is by diagonal $n$ by $n$ matrices, multiplying on the left of our matrix describing $W$. For $g \in A$, let $\lambda_i(g)$ denote the $i$th diagonal entry of the corresponding matrix. Then $g$ acts on $W$ by multiplying the entry in row $i$, column $j$ below a pivot to $\lambda_{\sigma(j)}(g)^{-1} \lambda_i(g)$. Therefore the open cell corresponding to the sequence $\{ w_i \}$ is $A$-homeomorphic to the complex $A$-representation
\[ \oplus_{s=1}^k \left( V_{\sigma(s)}^{-1} \otimes \left( \oplus_{r = \sigma(s) + 1 \textrm{, } r \neq \sigma(i)}^{n} V_r \right) \right) \]
\end{proof}

\begin{lemma}
Let $X$ be an $A$ space which has a finite cell decomposition by cells $A$-homeomorphic to complex $A$-representations, and let $E$ be a complex $A$-vector bundle over $X$. Then the Thom space of $E$ has a finite cell decomposition by cells $A$-homeomorphic to complex $A$-representations.
\end{lemma}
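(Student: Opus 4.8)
The plan is to induct on the number of cells in the given decomposition of $X$, reducing the problem to the behavior of Thom spaces under attaching a single cell $A$-homeomorphic to a complex $A$-representation. Suppose $X = X' \cup_f D(W)$ where $X'$ already has a finite decomposition into cells $A$-homeomorphic to complex $A$-representations, $W$ is a complex $A$-representation, and $f : S(W) \to X'$ is the attaching map (here $D(W)$ and $S(W)$ denote the unit disk and sphere of $W$ for some choice of $A$-invariant inner product; the open cell is the interior $D(W) \setminus S(W) \cong W$). Restricting the bundle $E$ to $X'$ and to the closed cell gives complex $A$-vector bundles, and since $D(W)$ is $A$-equivariantly contractible (it deformation retracts $A$-equivariantly to the origin), $E|_{D(W)}$ is $A$-equivariantly trivial, say $E|_{D(W)} \cong D(W) \times U$ for a complex $A$-representation $U$.

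Next I would analyze how the Thom space is built. Writing $M(E)$ for the Thom space, the inclusion $X' \hookrightarrow X$ induces a cofibration $M(E|_{X'}) \hookrightarrow M(E)$, and the cofiber is the Thom space of $E$ restricted to the open cell, one-point compactified appropriately --- more precisely, $M(E)/M(E|_{X'})$ is the Thom space of the bundle $E$ over the pair $(D(W), S(W))$, which by the trivialization above is $(D(W) \times S^U)/(S(W) \times S^U) \cong (D(W)/S(W)) \wedge S^U \cong S^W \wedge S^U \cong S^{W \oplus U}$. So passing from $M(E|_{X'})$ to $M(E)$ attaches a single cell $A$-homeomorphic to the complex $A$-representation $W \oplus U$, via an attaching map $S(W \oplus U) \to M(E|_{X'})$ built from $f$ and the clutching data identifying $E|_{D(W)}$ with $E|_{X'}$ along $S(W)$. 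Since $M(E|_{X'})$ has the desired decomposition by inductive hypothesis, so does $M(E)$. The base case, when $X$ is a single cell $A$-homeomorphic to a complex $A$-representation $W$, is exactly the computation $M(D(W) \times U) \cong S^{W \oplus U}$ just performed (together with the base cell; strictly the one-cell space has a $0$-cell, the basepoint of the Thom space, and one cell $S^{W \oplus U}$).

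The main obstacle I expect is bookkeeping the equivariant cofiber sequence of Thom spaces carefully: one must check that the trivialization of $E$ over the contractible closed cell can be chosen $A$-equivariantly, and that the resulting cell attachment in $M(E)$ is genuinely an attachment of a cell $A$-homeomorphic to a complex $A$-representation rather than merely a cofiber that is a representation sphere up to homotopy --- we want an honest $A$-CW-type structure, so I would phrase everything in terms of $A$-homeomorphisms of the relevant disk--sphere pairs, using that $S^{W \oplus U}$ with its basepoint is precisely "attach a $(W\oplus U)$-cell to a point." Equivariant triviality of a complex $A$-bundle over an $A$-contractible base follows from the $A$-equivariant covering homotopy property, which holds for the compact Lie group $A$; one fixes the fiber over the origin as $U$ and transports it over $D(W)$. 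Everything else is the standard cofiber-sequence manipulation for Thom spaces, done $A$-equivariantly.
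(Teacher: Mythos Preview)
Your proposal is correct and follows essentially the same approach as the paper: trivialize the bundle over each cell using $A$-equivariant contractibility, then build the Thom space cell by cell so that each new cell is the direct sum of the base cell representation with the fiber representation. The paper's proof is much terser---it records the trivialization step and then defers to ``the nonequivariant case''---whereas you have spelled out the induction and the cofiber-sequence bookkeeping explicitly, which is if anything more careful than the original.
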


\begin{proof}
Let $V$ be a complex $A$-representation homeomorphic to an open cell of $X$. Then $V$ is $A$-equivariantly contractible, hence the restriction of $E$ to $V$ is trivial, ie $A$-homeomorphic to the bundle $V \times W$ for some finite dimensional complex $A$-representation $W$. From here, the proof proceeds as in the nonequivariant case.
\end{proof}

\begin{definition}
A finite complex $A$-spectrum is an $A$-spectrum which has a finite cell decomposition by cells which are complex $A$-representations. If $A$ is the trivial group, this is the notion of a finite even spectrum.
\end{definition}

Rephrasing the above lemmas, we have that the suspension $A$-spectra of $Gr(k,V)$ and $Th(Gr(k,V))$ are finite complex $A$-spectra. Now we can begin to mimic the argument presented in \cite{Lur10}, with the only other modification being a spectral sequence argument. To start with, we extract a property of $A$-Landweber exact spectra.

\begin{definition}
A spectrum $E$ is complex generated if every map from a finite spectrum $X$ to $E$ factors through a finite complex spectrum.
\end{definition}

\begin{lemma}
\label{cpxgen}
Any $A$-Landweber exact spectrum $E$ is complex generated.
\end{lemma}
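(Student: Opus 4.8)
The plan is to exploit the fact that an $A$-Landweber exact spectrum $E$ has $E^*(-) \cong MU_A^*(-) \otimes_{MU_A^*} E^*$ as a functor on compact $A$-spectra (Proposition \ref{Bridges}), so that any map out of a finite $A$-spectrum $X$ into $E$ is controlled by $MU_A^*(X)$, and $MU_A$ is built from finite complex $A$-spectra. More precisely, by Spanier-Whitehead duality it suffices to show that for every finite $A$-spectrum $X$, a map $X \to E$ factors through a finite complex $A$-spectrum. Passing to the Spanier-Whitehead dual $DX$, such a map corresponds to an element of $\pi_0^A(E \wedge DX) = E_0(DX)$, or dually an element of $E^0(X)$; by Landweber exactness this element lives in $MU_A^0(X) \otimes_{MU_A^*} E^*$, hence is a finite $E^*$-linear combination of images of classes $z_i \in MU_A^{*}(X)$. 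Each $z_i$ is represented by a map $X \to \Sigma^{?} MU_A$.

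The key point is then that $MU_A$ itself is a filtered colimit of finite complex $A$-spectra, namely the suspension spectra of Thom spaces of the canonical bundles over the equivariant Grassmannians $Gr(k,V)$, which are finite complex $A$-spectra by the two preceding lemmas. So first I would fix a model expressing $MU_A \simeq \operatorname{colim}_\alpha T_\alpha$ with $T_\alpha$ finite complex $A$-spectra (the tautological Thom spectra filtration of tom Dieck's construction). Since $X$ is finite (hence compact), any map $X \to MU_A$ (up to suspension) factors through some $T_\alpha$. Applying this to each of the finitely many classes $z_1,\dots,z_m$ appearing in the expansion of our element of $E^*(X)$, and combining with the map $MU_A \to E$ realizing the orientation, I obtain that the original map $X \to E$ factors as $X \to T \to MU_A \to E$ for a single finite complex $A$-spectrum $T$ (take $T$ to be a wedge of the finitely many $T_{\alpha_i}$, or a large enough stage of the filtration). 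Thus $E$ is complex generated.

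The one subtlety to be careful about is bookkeeping of the tensor-product decomposition: an element of $(MU_A^*(-) \otimes_{MU_A^*} E^*)(X)$ is genuinely a finite sum $\sum z_i \otimes e_i$ because $X$ is finite and $MU_A^*(X)$ is finitely generated over $MU_A^*$ in each degree (or at worst the tensor product is a filtered colimit of the finite-sum situation and $X$ being compact lets us descend). I would phrase this carefully: the natural transformation $MU_A^*(-)\otimes_{MU_A^*} E^* \to E^*(-)$ is an isomorphism, both sides are cohomology theories on compact $A$-spectra, and for a \emph{finite} $A$-spectrum $X$ any class in the tensor product is a finite sum of elementary tensors. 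The main obstacle — really the only nonformal input — is having a clean description of $MU_A$ as a filtered homotopy colimit of finite complex $A$-spectra compatible with the orientation $MU_A \to E$; this is exactly what the Grassmannian and Thom space lemmas above were set up to provide, so modulo citing tom Dieck's geometric construction of $MU_A$ the argument is essentially formal. I do not expect the passage from $A$-spaces to $A$-spectra to cause trouble, since finiteness of $X$ reduces everything to compact objects and filtered colimits commute with the relevant homotopy groups.
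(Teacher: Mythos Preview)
Your proposal is correct and follows essentially the same route as the paper: write a class in $E^0(X)\cong MU_A^0(X)\otimes_{MU_A^*}E^*$ as a finite sum $\sum c_i\otimes m_i$, factor the original map through $\bigoplus_i \Sigma^{d_i} MU_A$, and then use that $MU_A$ is a filtered colimit of Thom spectra of tautological bundles over equivariant Grassmannians, which are finite complex $A$-spectra. The only point the paper makes explicit and you leave implicit is that the degrees $d_i$ are necessarily \emph{even} (since both $MU_A^*$ and $E^*$ are concentrated in even degrees), so that $\Sigma^{d_i}$ of a finite complex $A$-spectrum is again one; you should state this, since an odd suspension would not preserve the property.
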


\begin{proof}
The proof is essentially identical to the argument \cite{Lur10}, except that we are working cohomologically rather than homologically. For the reader's convenience we reproduce it here.

Begin with a map $f : X \rightarrow E$ where $X$ is a finite spectrum. We can regard $f$ as an element of equivariant cohomology: $f \in [X,E]_0^A = E^0(X) \cong MU_A^0(X) \otimes_{MU_A^*} E^*$. As an element of the right-hand side, write $f$ as $\Sigma c_i m_i$ with $c_i \in MU_A^{d_i}(X)$ and $m_i \in E^{-d_i}$, so that $f$ factors as \[ X \xrightarrow{ \oplus c_i} \oplus \Sigma^{d_i} MU_A \xrightarrow{\oplus m_i} E. \] By evenness of $MU_A^*$ (cf \cite{Lof73} \cite{Com96}) and evenness of $E^*$, each $d_i$ is even, so it is enough to show that $MU_A$ is complex generated.

Next, $MU_A$ may be written as a colimit over a diagram indexed by Thom spaces of canonical bundles over Grassmannians $Gr(k,V)$ (cf the construction of global $\mathbf{MU}$ in \cite[Section 6.1]{Sch18}). We have already shown that each of these Thom complexes is a finite complex $A$-spectrum. Any map from a finite spectrum factors through one of the terms in the colimit diagram, so the result follows.
\end{proof}

\subsection{Phantom maps for complex generated $A$-spectra}

To show there are no phantom maps between $A$-Landweber exact spectra, the only condition on the domain that we need is that it is complex generated. Nonequivariantly, we need to assume that the codomain has homotopy groups concentrated in even degrees, in order to run a spectral sequence argument. This spectral sequence arises from the skeletal filtration on a finite even spectrum. 

Equivariantly, we have replaced "finite even spectrum" with "finite complex spectrum". In particular, the associated graded pieces are wedges of representation spheres. Thus, in order to exploit the same spectral sequence argument, we further need to assume that the codomain is complex stable. In other words, $E'^*(S^V) \cong E'^*(S^{dim_\R(V)})$. Any complex orientable $A$-spectrum is complex stable, so $A$-Landweber exact spectra are complex stable.

\begin{lemma}
\label{SubForAHSS}
Let $X$ be a finite complex spectrum and $E'$ a complex stable spectrum whose homotopy groups are concentrated in even degrees. Then $E'^*(X)$ is also concentrated in even degrees.
\end{lemma}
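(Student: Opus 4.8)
The plan is to run the Atiyah--Hirzebruch-type spectral sequence associated to the skeletal filtration of $X$. Since $X$ is a finite complex spectrum, it is built from finitely many cells each of which is a complex $A$-representation $V$; the associated graded of the skeletal filtration is therefore a finite wedge of representation spheres $S^V$ with each $V$ a complex $A$-representation, hence $\dim_\R(V)$ even. The skeletal filtration gives a conditionally (in fact finitely, hence strongly) convergent spectral sequence with $E_1$-page a sum of terms of the form $E'^*(S^V)$, converging to $E'^*(X)$.

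The next step is to identify these $E_1$-terms. Here I would invoke the hypothesis that $E'$ is complex stable: $E'^*(S^V) \cong E'^*(S^{\dim_\R(V)})$, and since $\dim_\R(V)$ is even this is just a shift of $E'^*(S^0) = \pi_{-*}^A(E')$ by an even integer. By the assumption that the homotopy groups of $E'$ are concentrated in even degrees, each $E_1$-term is concentrated in even total degree. Because all nonzero contributions to the $E_1$-page sit in even total degree and the differentials shift total degree by $1$, every differential vanishes for parity reasons, so the spectral sequence degenerates at $E_1 = E_\infty$.

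Finally, strong convergence (automatic since the filtration is finite) means $E'^*(X)$ has a finite filtration whose associated graded is $E_\infty$, which we have just shown is concentrated in even degrees; an extension of even-concentrated graded groups is even-concentrated, so $E'^*(X)$ is concentrated in even degrees, as claimed.

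The main obstacle, and the only point requiring care, is making precise the claim that the associated graded of a finite complex $A$-spectrum is a wedge of representation spheres $S^V$ with $V$ complex, together with checking that complex stability genuinely reduces $E'^*(S^V)$ to an even shift of the coefficients --- i.e.\ that the complex-stability isomorphism $\sigma$ has the degree bookkeeping stated in the definition of complex stable (Okonek's $\sigma : E^*(-) \cong E^{*+\dim_\C(V)}(S^V \wedge -)$). Once the bookkeeping is set up, the parity vanishing of differentials and the extension argument are entirely formal.
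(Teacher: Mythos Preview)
Your proposal is correct and follows essentially the same approach as the paper: both run the spectral sequence coming from the skeletal filtration of $X$, use complex stability to identify the $E_1$-terms as even shifts of $\pi_*^A(E')$, and conclude by parity. The only cosmetic difference is that the paper indexes the filtration so that $X_{2i}=X_{2i+1}$, making the $E_1$-page vanish for odd $p$ and nonzero only when both $p$ and $q$ are even, whereas you track only the total degree; either bookkeeping yields the same vanishing of differentials and the same conclusion.
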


\begin{proof}
By assumption on $X$ we have a filtration $* = X_0 \subset X_1 \subset ... \subset X_n = X$ such that $X_{2i} = X_{2i+1}$ and the cofiber of $X_{2i-1} \rightarrow X_{2i}$ is a wedge of finitely many complex representation spheres. Taking the mapping spectrum $map(-,E')$ of this filtration yields a filtration on $map(X,E')$, and the associated spectral sequence converges strongly: \[ \pi_{p+q} \left( cofib \left( map \left(X_{p-1}, E' \right) \rightarrow map \left( X_p, E' \right) \right) \right) \implies \pi_{p+q}(map(X,E')) = E'^{p+q}(X) \]

Now we must identify the $E_1$-page. For odd $p$, $X_{p-1} = X_p$, so the cofiber is zero. For even $p$, mapping spectra commute with cofibers in the first variable, and the cofiber of $X_{p-1} \rightarrow X_p$ is a wedge of complex representation spheres, $\vee_i \mathbb{S}^{V_i}$. We compute $\pi_{p+q}( map(\vee_i \mathbb{S}^{V_i} , E') \cong \oplus_i E'^{p+q}(S^{V_i}) \cong \oplus_i E'^{p+q-dim_\R(V_i)}$, using complex stability of $E'$.

Since $p$ and $dim_\R(V_i)$ are even, and $E'^*$ is concentrated in even degrees, we see that if $q$ is odd, the corresponding terms of the $E_1$-page are zero. Since we already observed that $E_1^{p,q}$ is zero when $p$ is odd, we see that $E_1^{p,q}$ is nonzero only when both $p$ and $q$ are even. Thus $E'^*(X)$ is concentrated in even degrees, as desired.
\end{proof}

We may now proceed to the main technical result of this section.

\begin{theorem}
Let $E$ be a complex generated $A$-spectrum and $E'$ a complex stable $A$-spectrum whose homotopy groups are concentrated in even degrees. Then every phantom map from $E$ to $E'$ is nullhomotopic.
\end{theorem}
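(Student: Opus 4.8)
The plan is to adapt the argument of \cite{Lur10} essentially verbatim, with Lemma \ref{SubForAHSS} playing the role that Atiyah--Hirzebruch vanishing plays nonequivariantly. Fix a phantom map $f\colon E\to E'$; by Lemma \ref{LurieLemma5}(5) this means exactly that $f$ restricts to the zero map on every finite $A$-spectrum mapping to $E$. Writing $E$ as a filtered homotopy colimit $E\simeq\operatorname*{hocolim}_\alpha X_\alpha$ of finite $A$-spectra, the Milnor exact sequence for the mapping spectrum $F_A(E,E')=\operatorname*{holim}_\alpha F_A(X_\alpha,E')$ reads
\[
0\longrightarrow \varprojlim^1_\alpha\,[\Sigma X_\alpha,E']^A_0\longrightarrow [E,E']^A_0\longrightarrow \varprojlim_\alpha [X_\alpha,E']^A_0\longrightarrow 0 .
\]
Since each $X_\alpha$ is finite, $f$ restricts to $0$ on it, so the image of $f$ in the right-hand inverse limit vanishes and $f$ comes from the $\varprojlim^1$ term. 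It therefore suffices to show $\varprojlim^1_\alpha [\Sigma X_\alpha,E']^A_0=0$, for which it is enough that the pro-abelian group $\{[\Sigma X_\alpha,E']^A_0\}_\alpha$ is pro-zero.

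To see pro-zeroness, fix an index $\alpha$. Because $E$ is complex generated, the structure map $X_\alpha\to E$ factors as $X_\alpha\xrightarrow{u}Z\xrightarrow{v}E$ for some finite complex $A$-spectrum $Z$. As $Z$ is finite and $E=\operatorname*{hocolim}_\gamma X_\gamma$, the map $v$ factors through some $X_\beta$ with $\beta\geq\alpha$; enlarging $\beta$ if necessary (using that $[X_\alpha,-]^A$ commutes with the filtered colimit, so two maps $X_\alpha\rightrightarrows X_\beta$ agreeing after composition with $X_\beta\to E$ already agree after composition with some $X_\beta\to X_{\beta'}$), we may arrange that the composite $X_\alpha\xrightarrow{u}Z\to X_\beta$ equals the transition map $X_\alpha\to X_\beta$ of the colimit system in the homotopy category. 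Consequently the transition map
\[
[\Sigma X_\beta,E']^A_0\longrightarrow [\Sigma X_\alpha,E']^A_0
\]
of the pro-object, being precomposition with $\Sigma(X_\alpha\to X_\beta)$, factors through $[\Sigma Z,E']^A_0\cong E'^{-1}(Z)$. But $Z$ is a finite complex $A$-spectrum and $E'$ is complex stable with homotopy concentrated in even degrees, so Lemma \ref{SubForAHSS} gives $E'^{-1}(Z)=0$. Hence every transition map is eventually zero, the pro-object is pro-zero, $\varprojlim^1$ vanishes, and $f\simeq 0$.

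The only genuinely equivariant ingredient here is Lemma \ref{SubForAHSS}, which has already been reduced to the representation-sphere filtration of a finite complex $A$-spectrum together with complex stability of $E'$; everything else is formal. The step I expect to be the main obstacle is the bookkeeping in the pro-zero argument — arranging that the factorization through $Z$ is literally the colimit transition map rather than merely agreeing with it after mapping to $E$ — together with the usual caveat that the displayed Milnor sequence is cleanest for sequential homotopy colimits, so one either presents $E$ as a sequential homotopy colimit of finite $A$-spectra or passes to the $\varprojlim^s$-spectral sequence for filtered diagrams (where the higher derived limits contribute nothing once the relevant pro-object is pro-zero). Finally, since an $A$-Landweber exact spectrum is complex generated by Lemma \ref{cpxgen} and is complex stable with even homotopy groups, this theorem yields that there are no nontrivial phantom maps between $A$-Landweber exact $A$-spectra.
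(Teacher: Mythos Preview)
Your route differs from the paper's, even though you describe it as following \cite{Lur10} verbatim. The paper (and Lurie) does not present $E$ as a filtered homotopy colimit and does not invoke a Milnor sequence. Instead it forms the single cofiber sequence
\[
K \longrightarrow \bigoplus_\alpha X_\alpha \xrightarrow{\ u\ } E \longrightarrow \Sigma K,
\]
with $\alpha$ running over all maps from finite \emph{complex} $A$-spectra into $E$, notes that a phantom $p$ has $p\circ u\simeq 0$ and hence factors through $\Sigma K$, and then proves that $K$ is a retract of a direct sum $\bigoplus_\beta Y_\beta$ of finite complex $A$-spectra (by writing down a second layer $\bigoplus_\beta Y_\beta\to\bigoplus_\alpha X_\alpha$ and comparing fiber sequences). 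Lemma \ref{SubForAHSS} then kills $E'^{-1}(\bigoplus_\beta Y_\beta)$, hence $E'^{-1}(K)$, hence $p$. Both arguments rest on the same two inputs---complex generatedness of $E$ and Lemma \ref{SubForAHSS}---but the paper packages them as an explicit two-step resolution of $E$ by sums of finite complex $A$-spectra, while you package them as a pro-zeroness statement about $\{[\Sigma X_\alpha,E']^A\}_\alpha$.

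The caveat you flag at the end is a genuine gap, not just bookkeeping. Your factorization-through-$Z$ argument shows that $\{[\Sigma^n X_\alpha,E']^A\}_\alpha$ is pro-zero for \emph{odd} $n$ (the transition maps factor through $E'^{-n}(Z)=0$), but says nothing for even $n$. In the sequential case only $n=1$ enters the Milnor sequence and you are finished; however an arbitrary $A$-spectrum need not be a \emph{sequential} homotopy colimit of finite $A$-spectra. For a general filtered presentation the kernel of $[E,E']^A\to\varprojlim_\alpha[X_\alpha,E']^A$ receives contributions from $\varprojlim^s[\Sigma^s X_\alpha,E']^A$ for all $s\ge 1$ in the Bousfield--Kan spectral sequence, and for even $s$ the relevant pro-object is not pro-zero, so your parenthetical ``the higher derived limits contribute nothing once the relevant pro-object is pro-zero'' does not cover those terms. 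The paper's retract construction sidesteps this entirely: once $K$ is exhibited as a retract of a sum of finite complex spectra there is no tower, no derived limit, and no convergence question. Your approach can be repaired by producing exactly that retract (which is what the ind-isomorphism with a system of finite complex spectra is trying to express), but at that point the Milnor sequence is no longer doing any work and you have reproduced the paper's proof.
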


\begin{proof}
This proof is again nearly identical to that appearing in \cite{Lur10}. We include it for the reader's convenience.

Let $A$ be a set of representatives for all homotopy equivalence classes of maps $X_\alpha \rightarrow E$, for $X_\alpha$ a finite complex spectrum. Then we can form the fiber sequence \[ K \rightarrow \oplus_\alpha X_\alpha \xrightarrow{u} E \rightarrow \Sigma K \]

Now let $p : E \rightarrow E'$ be a phantom map. By criterion (5) of Lemma \ref{LurieLemma5}, $p \circ u$ is nullhomotopic, so $p$ factors through $\Sigma K$. So it is sufficient to show that every map $\Sigma K \rightarrow E'$ is nullhomotopic, or equivalently that $E'^{-1}(K)$ is zero. By Lemma \ref{SubForAHSS}, this will follow from showing that $K$ is a retract of a direct sum of finite complex spectra.

Let $B$ be the collection of triples $(\alpha, \alpha', f)$ with $\alpha, \alpha' \in A$ and $f$ ranging through homotopy equivalence classes of maps $X_\alpha \rightarrow X_{\alpha'}$ of spaces over $E$. For each such triple $\beta = (\alpha, \alpha', f)$, set $Y_\beta = X_\alpha$. We will show that $K$ is a retract of $\oplus_\beta Y_\beta$.

There is a map $\oplus_\beta Y_\beta \rightarrow \oplus_\alpha X_\alpha$ given by the difference of $Y_\beta = X_\alpha \hookrightarrow \oplus_\alpha X_\alpha$ and $Y_\beta = X_\alpha \xrightarrow{f} X_{\alpha'} \hookrightarrow \oplus_\alpha X_\alpha$. Take $F$ to be the cofiber of $\phi$, so that we have a map of fiber sequences \[ \begin{tikzcd} \oplus_\beta Y_\beta \arrow{r} \arrow{d} & \oplus_\alpha X_\alpha \arrow{r} \arrow{d} & F \arrow{d} \\
K \arrow{r} & \oplus_\alpha X_\alpha \arrow{r} & E \end{tikzcd} \]

Next, we want to construct a map going the opposite direction, $q : E \rightarrow F$; we will construct this by defining a natural transformation of homology theories of finite spectra. Passing to Spanier-Whitehead duals, it is sufficient to define $q(f) : X \rightarrow F$ for every map $f : X \rightarrow E$ (with $X$ an arbitrary finite spectrum). Since $E$ is complex generated, $f : X \rightarrow E$ factors through some $X_{\alpha'}$ as $X \xrightarrow{f'} X_{\alpha'} \rightarrow E$ for some $\alpha' \in A$. Define $q(f)$ as the composite $X \xrightarrow{f'} X_{\alpha'} \hookrightarrow \oplus_\alpha X_\alpha \rightarrow F$.

To show $q(f)$ is well-defined, suppose we have another factorization of $f$ as $X \xrightarrow{f''} X_{\alpha''} \rightarrow E$. Taking $Y$ to be the pushout $X_{\alpha'} \sqcup_X X_{\alpha''}$, $Y$ is a finite spectrum, and we have a canonical map $Y \rightarrow E$. $E$ is complex generated, so we get a composition $Y \xrightarrow{g} X_{\alpha} \rightarrow E$ for some $\alpha \in A$. Defining $h'$ to be $X_{\alpha'} \rightarrow Y \xrightarrow{g} X_\alpha$ and defining $h''$ similarly, we get two elements of $B$, $(\alpha', \alpha, h')$, and $(\alpha'', \alpha, h'')$. Thus the constructions of $q(f)$ from either factorization of $f$ are both equal to \[ X \rightarrow Y \xrightarrow{g} X_\alpha \hookrightarrow \oplus_{\alpha} X_\alpha \rightarrow F \]

Now that we have constructed our map on homology, we get a map of $G$-spectra $E \rightarrow F$. This gives us a diagram of fiber sequences \[ \begin{tikzcd}
K \arrow{r} \arrow{d} & \oplus_\alpha X_\alpha \arrow{d} \arrow{r} & E \arrow{d} \\
\oplus_\beta Y_\beta \arrow{d} \arrow{r} & \oplus_\alpha X_\alpha \arrow{d} \arrow{r} & F \arrow{d} \\
K \arrow{r} & \oplus_\alpha X_\alpha \arrow{r} & E 
\end{tikzcd} \]
By contruction the right vertical map induces the identity on homology, hence is an equivalence. Thus the left vertical composition is an equivalence. In particular, $K$ is a retract of $\oplus_\beta Y_\beta$, which is a direct sum of finite complex spectra, as desired.
\end{proof}

Putting this all together, we have:

\begin{theorem}
Let $E$ and $E'$ be $A$-Landweber exact spectra. Then every phantom map from $E$ to $E'$ is nullhomotopic.
\end{theorem}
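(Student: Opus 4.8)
The plan is to observe that the statement is nothing more than the combination of this section's main phantom-vanishing theorem with two structural facts about $A$-Landweber exact spectra already recorded above, so the proof is essentially a one-line deduction. First I would handle the source: since $E$ is $A$-Landweber exact, it is complex generated by Lemma \ref{cpxgen}, which is precisely the hypothesis placed on the domain in the phantom-vanishing theorem.

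Next I would verify the two hypotheses imposed on the target $E'$. On the one hand, $E'$ is $A$-equivariantly complex orientable, hence complex stable in the sense that $E'^*(S^V) \cong E'^*(S^{dim_\R(V)})$, since every complex orientable $A$-spectrum is complex stable; this is exactly what makes the skeletal mapping-spectrum spectral sequence of Lemma \ref{SubForAHSS} identify correctly. On the other hand, the homotopy groups of $E'$ are concentrated in even degrees, as was observed to follow from the evenness of $MU_A^* \cong L_A$ (\cite{Lof73}, \cite{Com96}). With $E$ complex generated and $E'$ complex stable with even homotopy groups, the main theorem of this subsection applies verbatim to the source $E$ and target $E'$, yielding that every phantom map $E \to E'$ is nullhomotopic, which is the claim.

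I do not anticipate any genuine obstacle at this point, because all the equivariant content has been absorbed into the preceding lemmas: the $A$-Schubert cell decomposition exhibiting equivariant Grassmannians and the Thom spaces of their canonical bundles as finite complex $A$-spectra, the resulting finite-complex-$A$-spectrum structure on $MU_A$, the reduction of an arbitrary map out of $MU_A$ through one such finite complex, and the mapping-spectrum spectral sequence of Lemma \ref{SubForAHSS} that replaces the classical Atiyah-Hirzebruch argument over representation-sphere associated graded pieces. The only point requiring a moment's care is that "phantom map" is read through the equivalent characterizations of Lemma \ref{LurieLemma5} — in particular criterion (5), that every map from a finite spectrum into $E$ composes to zero in $E'$ — since that is the formulation actually consumed by the phantom-vanishing theorem.
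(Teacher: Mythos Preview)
Your proposal is correct and matches the paper's own proof essentially verbatim: invoke Lemma \ref{cpxgen} to get $E$ complex generated, note that $A$-Landweber exact spectra are complex orientable (hence complex stable) with even homotopy groups, and apply the main technical phantom-vanishing theorem. The additional commentary you provide on where the equivariant content is hidden is accurate but not needed for the deduction itself.
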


\begin{proof}
By Lemma \ref{cpxgen} $A$-Landweber exact spectra are complex generated. $A$-Landweber exact spectra have homotopy groups concentrated in even degrees, and by $A$-equivariant complex orientability, they are complex stable.
\end{proof}

\begin{corollary}
Any phantom map with codomain and domain chosen from the following list of genuine $A$-spectra, or their localizations at a prime $p$, is nullhomotopic: $MU_A$, $KU_A$, and $BP_A$.
\end{corollary}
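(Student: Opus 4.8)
The plan is to apply the preceding theorem --- phantom maps between $A$-Landweber exact spectra are nullhomotopic --- and reduce the corollary to the assertion that every spectrum on the list, together with each of its $p$-localizations, is $A$-Landweber exact. The integral cases have already been settled: $MU_A$ is $A$-Landweber exact tautologically from the definition, $KU_A$ is $A$-Landweber exact by the corollary following our identification of the equivariant Conner--Floyd map with the equivariant formal group law classification map, and $BP_A$ is $A$-Landweber exact by the proposition exhibiting $BP_A^*(-) \cong MU_A^*(-) \otimes_{MU_A^*} BP_A^*$. Each of these is by construction a complex oriented $A$-spectrum with orientation class in degree $2$, so it fits the hypotheses of the theorem.

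For the $p$-localizations I would argue, as in the remark opening Section~3, that $p$-localization preserves $A$-Landweber exactness. If $E$ is $A$-Landweber exact, then for any compact $A$-space (or finite $A$-spectrum) $X$,
\[ MU_A^*(X) \otimes_{MU_A^*} E^*_{(p)} \;\cong\; \bigl( MU_A^*(X) \otimes_{MU_A^*} E^* \bigr) \otimes_{\Z} \Z_{(p)} \;\cong\; E^*(X) \otimes_{\Z} \Z_{(p)} \;\cong\; (E_{(p)})^*(X), \]
and since $\Z_{(p)}$ is flat over $\Z$ the functor $(-) \otimes_{\Z} \Z_{(p)}$ is exact, so $MU_A^*(-) \otimes_{MU_A^*} E^*_{(p)}$ is again a cohomology theory. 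The complex orientation of $E$ localizes to one on $E_{(p)}$, still in degree $2$, so $E_{(p)}$ is $A$-Landweber exact in our sense. Taking $E \in \{ MU_A, KU_A, BP_A \}$ disposes of the remaining cases.

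Hence every spectrum under consideration --- the three listed $A$-spectra and their $p$-localizations --- is $A$-Landweber exact, and the preceding theorem applies for any choice of domain and codomain from this collection, yielding the corollary. There is essentially no obstacle here; the only step worth a line of justification is that $p$-localization preserves $A$-Landweber exactness, which comes down to flatness of $\Z_{(p)}$ over $\Z$ together with the identification of $E_{(p)}$-cohomology of a finite $A$-spectrum with the $p$-localization of its $E$-cohomology.
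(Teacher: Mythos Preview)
Your proposal is correct and follows the same route as the paper: the corollary is stated without proof and is meant to be an immediate application of the preceding theorem once one knows that $MU_A$, $KU_A$, $BP_A$, and their $p$-localizations are all $A$-Landweber exact, facts established earlier in the text (with the flatness-of-localization remark at the start of Section~3 covering the $p$-local cases). Your extra line justifying that $p$-localization preserves $A$-Landweber exactness via flatness of $\Z_{(p)}$ over $\Z$ is exactly the content of that remark, just spelled out more explicitly.
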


\section{Acknowledgments}

The author would like to thank Mike Hill and Peter May for serving consecutively as thesis advisor during the gestation of the ideas contained within and for answering questions on the basic principles of equivariant homotopy theory. Also, the author would like to thank Lennart Meier, Markus Hausmann, Stefan Schwede, and Neil Strickland for helpful conversations regarding the content of this paper.

Additionally, the author thanks Jack Carlisle for catching a mistake in an early draft; the fix for this mistake (using base change to define equivariant $p$-typicalization rather than an explicit power series formula) led to the ideas necessary to interface with May's conjecture. A portion of these results will likely appear in the author's PhD thesis, the main result of which the author hopes to be an equivariant Landweber exact functor theorem.

\bibliographystyle{alpha}
\bibliography{NPM}

\end{document}